\DeclareMathOperator{\arccosh}{arccosh}
\newtheorem{theorem}{Theorem}[section]
\newtheorem{prop}{Proposition}[section]
\newtheorem{lemma}{Lemma}[section]
\newtheorem{definition}{Definition}[section]
\newtheorem{remark}{\textbf{Remark}}[section]
\def\rr{\mathbb{R}}
\def\ss{\mathbb{S}}
\def\hh{\mathbb{H}}
\def\bb{\mathbb{B}}
\def\O{\Omega}
\def\p{\partial}
\def\a{\alpha}
\def\b{\beta}
\def\p{\partial}
\def\S{{\Sigma}}
\def\<{\langle}
\def\>{\rangle}
\def\div{{\rm div}}
\def\n{\nabla}
\def\G{\Gamma}
\def\ode{\bar{\Delta}}
\def\De{\Delta}
\numberwithin{equation} {section}
\begin{document}

\title[\tiny{A partially overdetermined problem in domains with partial umbilical boundary}]{A partially overdetermined problem in domains with partial umbilical boundary in  space forms}
\author{Jinyu Guo}
\address{School of Mathematical Sciences\\
Xiamen University\\
361005, Xiamen, P.R. China}
\email{guojinyu14@163.com}
\author{Chao Xia}
\address{School of Mathematical Sciences\\
Xiamen University\\
361005, Xiamen, P.R. China}
\email{chaoxia@xmu.edu.cn}
\thanks{This work is supported by NSFC (Grant No. 11871406).
}

\begin{abstract}
In the first part of this paper, we consider a partially overdetermined mixed boundary value problem in space forms and generalize the main result in \cite{GX} into the case of general domains with partial umbilical boundary in space forms. Precisely, we prove that a partially overdetermined problem in a domain with partial umbilical boundary admits a solution if and only if the rest part of the boundary is also part of an umbilical hypersurface. {In the second part of this paper, we prove a Heintze-Karcher-Ros type inequality for embedded hypersurfaces with free boundary lying on a horosphere or an equidistant hypersurface in the hyperbolic space. As an application, we show Alexandrov type theorem for constant mean curvature hypersurfaces with free boundary in these settings.}
\end{abstract}

\date{}
\maketitle


\section{Introduction}
In a celebrated paper \cite{Se}, Serrin initiated the study of the following overdetermined boundary value problem (BVP)
\begin{equation}\label{21}
\begin{cases}{}
\Delta u=1, & \text{in}\ \Omega\\
u=0 ,     & \text{on} \ \partial\Omega\\
\p_\nu u=c, & \text{on}\ \partial\Omega,\\
\end{cases}
\end{equation}
where $\Omega$ is an open, connected, bounded domain in $\mathbb{R}^{n}$ with smooth boundary $\p\O$,   $c\in \rr$ is a constant and $\nu$ is the unit outward normal to $\partial\Omega$.
Serrin proved that if \eqref{21} admits a solution, then  $\Omega$ must be a ball and the solution $u$ is radially symmetric.
Serrin's proof is based on the moving plane method or Alexandrov reflection method, which
has been invented by Alexandrov in order to  prove the famous nowadays so-called Alexandrov's soap bubble theorem \cite{Al}: any closed, embedded hypersurface of constant mean curvature (CMC) must be a round sphere.
\par In space forms\footnote{Throughout this paper, we regard an open hemi-sphere as a spherical space form.}, Serrin's symmetry result was proved in \cite{SJ} and \cite{Mo} by the method of moving plane. A special overdetermined problem in space forms has been considered by Qiu-Xia \cite{QX} by using Weinberger's approach, see also \cite{CV}. We also mention that a corresponding result in the closed sphere case is no longer true, see e.g. \cite{FMW}.

Serrin's overdetermined BVP has close relationship with closed CMC hypersurfaces. Analog to closed CMC hypersurfaces, there are several rigidity results for free boundary CMC hypersurfaces in the Euclidean unit ball $\bb^n$.  Here we use ``free boundary'' to mean a hypersurface which intersects $\ss^{n-1}$ orthogonally. We refer to a recent survey paper \cite{WXsurvey} for details. In particular, Alexandrov type theorem says that a free boundary CMC hypersurface in a half ball must be a free boundary spherical cap. Motivated by this,
 we have proposed  in \cite{GX} the study of a partially overdetemined BVP in a half ball. Precisely, let $\bb^n_+=\{x\in \bb^n: x_n>0\}$ be the half Euclidean unit ball and
$\O\subset \bb^n_+$ be an open bounded, connected domain with boundary $\p\O=\bar \S\cup T$, where $\S\subset\bb^{n}_+$ is a smooth open hypersurface and $T\subset \ss^{n-1}$  meets $\S$ at a common $(n-2)$-dimensional submanifold $\Gamma\subset \ss^{n-1}$.
We have considered the following partially overdetemined BVP in $\O$:
 \begin{equation}\label{Eoverd}
\begin{cases}{}
\Delta u=1 , & \text{in} \ \Omega\subset\bb^n_+,\\
u=0 ,     & \text{on} \ \bar \Sigma,\\
 \p_{\nu}u=c, & \text{on}\ \bar \Sigma,\\
 \p_{\bar N}u=u , & \text{on}\ T,
\end{cases}
\end{equation}
where $\nu$ and $\bar N(x)=x$ are the outward unit normal of $\Sigma$ and $T\subset \ss^{n-1}$ respectively. We have proved the following result.
\begin{theorem}[\cite{GX}]\label{Ethm}
Let $\O$ be as above. Assume  \eqref{Eoverd} admits a weak solution $$u\in W_0^{1,2}(\O, \S)=\{u\in W^{1,2}(\O), u|_{\bar \S}=0\},$$ i.e.,
\begin{eqnarray}\label{weak-form0}
&& \int_\O (\<\n u, \n v\> +v )\, dx -\int_T uv \, dA=0, \hbox{ for all } v\in W_0^{1,2}(\O, \S).
\end{eqnarray}
 Assume further that
$u\in W^{1,\infty}(\O)\cap W^{2,2}(\O).$
 Then   $\O$ must be of the form
 \begin{eqnarray}\label{Esph_cap}
\O_{nc}(a):=\left\{x\in \bb^n_+: |x-a\sqrt{1+(nc)^2}|^2< (nc)^2\right\}, \quad a\in \ss^{n-1} \end{eqnarray}
for some $a\in \ss^{n-1}$ and \begin{eqnarray}\label{Ecap-sol}
u(x)=u_{a,nc}(x):=\frac{1}{2n}(|x-a\sqrt{1+(nc)^2}|^2- (nc)^2).
\end{eqnarray}
\end{theorem}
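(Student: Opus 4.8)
The plan is to run Weinberger's $P$-function method, adapted to the mixed Dirichlet/free-boundary setting of \eqref{Eoverd}. I set
$$P := |\n u|^2 - \tfrac{2}{n}u,$$
and compute, using $\De u =1$ and the Bochner formula, that
$$\tfrac12\De P = |\n^2 u|^2 - \tfrac1n \ge \tfrac1n(\De u)^2 - \tfrac1n = 0,$$
so $P$ is subharmonic, with equality at a point precisely when $\n^2 u = \tfrac1n\,\mathrm{Id}$ there. The whole strategy is to force this equality everywhere: an umbilical Hessian integrates to a radial quadratic, whose level sets are concentric spheres, which is exactly the geometry of \eqref{Esph_cap}--\eqref{Ecap-sol}. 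Because the solution is only assumed to be $W^{1,\infty}\cap W^{2,2}$, I would first invoke interior elliptic regularity (the right-hand side of $\De u=1$ is smooth) to obtain $u\in C^\infty$ in the interior, so that all pointwise identities above are legitimate away from $\p\O$.

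Next I record the boundary behaviour of $P$. On $\bar\S$ the Dirichlet condition $u=0$ kills all tangential derivatives, so $|\n u|^2=(\p_\nu u)^2=c^2$ and hence $P\equiv c^2$ on $\bar\S$. The decisive computation is on the free boundary $T\subset\ss^{n-1}$: here the outward normal is the position field $\bar N(x)=x$, and $\ss^{n-1}$ is totally umbilical with $\n_X\bar N = X$ for $X$ tangent to $T$. Differentiating the Robin condition $\p_{\bar N}u=u$ tangentially and using umbilicity, I expect the mixed Hessian to vanish,
$$\n^2u(X,\bar N)=X(\p_{\bar N}u)-\<\n u,\n_X\bar N\>=X(u)-\<\n u,X\>=0,$$
for every $X$ tangent to $T$. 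Splitting $\n u=\n^Tu+u\,\bar N$ along $T$ then gives the clean formula $\p_{\bar N}P = 2u\,(\n^2u(\bar N,\bar N)-\tfrac1n)$. This vanishing of the cross term is exactly the point at which the two matching ``$1$''s---the curvature $1$ of $\ss^{n-1}$ and the coefficient $1$ in $\p_{\bar N}u=u$---cooperate, and it is what makes the ``partial umbilical boundary'' hypothesis the natural one.

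With these ingredients I would integrate the Bochner identity. Applying the divergence theorem gives
$$\int_\O\Big(|\n^2u|^2-\tfrac1n\Big)\,dx=\tfrac12\int_{\bar\S}\p_\nu P\,dA+\tfrac12\int_{T}\p_{\bar N}P\,dA,$$
and I would evaluate the two boundary integrals with the data above: on $\bar\S$, using $\n u=c\nu$ and $u_{\nu\nu}=1-cH_\S$, producing a term in $c|\S|$ and in $c^2\int_\S H_\S$; and on $T$, using $\n^2u(\bar N,\bar N)=1-(n-1)u-\De_T u$, producing the terms $\int_Tu$, $\int_Tu^2$, $\int_T|\n^Tu|^2$ together with a boundary integral over $\G=\bar\S\cap T$. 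To turn this into a sign I would feed in the global identities forced by the weak formulation \eqref{weak-form0}, in particular $\int_\O|\n u|^2+\int_\O u=\int_Tu^2$ (take $v=u$), together with a Pohozaev/Rellich identity obtained by testing $\De u=1$ against $\<x-x_0,\n u\>$.

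The hard part will be the bookkeeping and sign control of these boundary contributions on $T$ and at the corner $\G$: one must show that the assembled boundary term is $\le 0$, using the umbilical geometry of $\ss^{n-1}$, a Poincar\'e-type inequality on the cap $T$ tied to its first eigenvalue, and a Serrin corner analysis to dispose of the $\G$ terms (the limited regularity at $\G$ is the only place the $W^{2,2}$ rather than smooth hypothesis bites). Once the boundary integral is shown nonpositive, the nonnegative integrand forces $\int_\O(|\n^2u|^2-\tfrac1n)\,dx=0$, hence $\n^2u\equiv\tfrac1n\,\mathrm{Id}$ and $u(x)=\tfrac{1}{2n}|x-x_0|^2+b$. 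Finally I would pin down the constants from the boundary data: $u=0$ and $\p_\nu u=c$ on $\bar\S$ force $\bar\S$ to lie on the sphere $\{|x-x_0|=nc\}$, while $\p_{\bar N}u=u$ on $T\subset\ss^{n-1}$ forces $|x_0|^2=1+(nc)^2$, i.e. $x_0=a\sqrt{1+(nc)^2}$ with $a\in\ss^{n-1}$, giving exactly the cap $\O_{nc}(a)$ and the solution $u_{a,nc}$ of \eqref{Esph_cap}--\eqref{Ecap-sol}.
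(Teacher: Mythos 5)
Your local computations are all correct, and several of them coincide with genuine ingredients of the actual proof: the subharmonicity of $P=|\nabla u|^2-\tfrac{2}{n}u$ (the $K=0$ case of Proposition \ref{sub-harm}), the identity $P\equiv c^2$ on $\bar\Sigma$, the vanishing of the mixed Hessian $\nabla^2u(X,\bar N)=0$ on $T$ (the $\kappa=1$ case of \eqref{bdry-prop}), and the final pinning-down of the center once $\nabla^2u=\tfrac1n\,\mathrm{Id}$ is known. But the core of your argument --- ``once the boundary integral is shown nonpositive'' --- is exactly the step you never carry out, and it is where the whole difficulty of the theorem sits. Direct integration of $\Delta P\ge 0$ gives
\begin{equation*}
0\le \int_\Omega \Delta P\,dx=\int_{\bar\Sigma}\Bigl(2c\bigl(1-\tfrac1n\bigr)-2c^2H_\Sigma\Bigr)dA
+\int_T 2u\Bigl(\nabla^2u(\bar N,\bar N)-\tfrac1n\Bigr)dA,
\end{equation*}
and neither term has a usable sign: $\int_\Sigma H_\Sigma$ is not controlled (there is no a priori convexity or curvature bound on $\Sigma$), and on $T$ nothing is known about the sign of $\nabla^2u(\bar N,\bar N)-\tfrac1n$; recall that the Robin condition $\partial_{\bar N}u=u$ has the \emph{unfavorable} sign, which is precisely why even the maximum principle fails on general domains here. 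Your auxiliary tools do not repair this: a Pohozaev identity built from $\langle x-x_0,\nabla u\rangle$ requires choosing a center $x_0$ --- the very object the theorem produces --- and it creates uncontrolled terms $\langle x-x_0,\bar N\rangle$ on $T$; and ``Serrin corner analysis'' is a moving-plane boundary-point device with no counterpart for the interface $\Gamma$ in this integral setting, where the corner is instead handled by the integration-by-parts lemma of Proposition \ref{int-by-parts} under the $W^{1,\infty}\cap W^{2,2}$ hypothesis.

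The missing idea, which is how \cite{GX} (and this paper, in the space-form generalization) actually closes the argument, is to weight the integration by $Vu$ rather than integrate $\Delta P$ bare. Here $V=x_n$ in the half-ball case, arising from a conformal Killing field $X$ with $\tfrac12(D_iX_j+D_jX_i)=V\delta_{ij}$ whose crucial feature is that it is \emph{tangent to} $\mathbb{S}^{n-1}$, i.e.\ $\langle X,\bar N\rangle=0$ on $T$. Tangency kills every Pohozaev-type boundary term on $T$ without any choice of center; $u=0$ kills the terms on $\Sigma$; and the properties $\nabla^2V=0$, $\partial_{\bar N}V=V$ on $T$ make the remaining terms assemble into the two clean identities $\int_\Omega V(P-c^2)\,dx=0$ (the analog of Proposition \ref{Poho}) and then $\int_\Omega Vu\,\Delta P\,dx=0$ (the analog of Proposition \ref{int-id}). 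Combined with $V>0$ in $\bb^n_+$ and $u<0$ in $\Omega$ --- the sign of $u$ is itself a nontrivial fact, proved via the mixed eigenvalue comparison of Proposition \ref{mixSD_eigen1} and the argument of Proposition \ref{mp}, and it is entirely absent from your sketch --- subharmonicity forces $Vu\Delta P\equiv0$, hence $\Delta P\equiv 0$. So as it stands your proposal reduces the theorem to an unproved sign assertion that the paper's method is specifically designed to avoid; supplying the tangential conformal Killing structure and the negativity of $u$ is not bookkeeping but the essential content of the proof.
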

We remark that $\p\O_{nc}(a)\cap\bb^n_+$ is a free boundary spherical cap. Thus Theorem \ref{Ethm} gives a characterization of  free boundary spherical caps by an overdetermined BVP, which can be regarded as Serrin's analog for the setting of free boundary CMC hypersurfaces in a ball.


\smallskip

In this paper, we will generalize Theorem \ref{Ethm} into the setting of domains with partial umbilical boundary in space forms.

Let $(\mathbb{M}^{n}(K), \bar g)$ be a complete simply-connected Riemann manifold with constant sectional curvature $K$. Up to homoteties we may assume $K=0, 1, -1$; the case $K=0$ corresponds to the case of the Euclidean space $\mathbb{R}^{n}$, $K=1$ is the unit sphere $\mathbb{S}^{n}$ with the round metric and $K=-1$ is the hyperbolic space $\mathbb{H}^{n}$.
We recall some basic facts about umbilical hypersurfaces in $\mathbb{M}^{n}(K)$. It is well-known that  an umbilical hypersuface in space forms has constant principal curvature $\kappa\in \rr$.
By a choice of orientation (or normal vector field $\bar N$), we may assume $\kappa\in [0, \infty)$.  It is also a well-known fact that in $\mathbb{R}^{n}$ and $\ss^n$,  geodesic spheres $(\kappa>0)$ and totally geodesic hyperplanes $(\kappa=0)$ are all complete umbilical hypersurfaces, while in  $\mathbb{H}^{n}$ the family of all complete umbilical hypersurfaces includes geodesic spheres $(\kappa>1)$, totally geodesic hyperplanes $(\kappa=0)$, horospheres $(\kappa=1)$ and equidistant hypersurfaces $(0<\kappa<1)$ (see e.g.\cite{RAF}). We remark that unlike geodesic spheres, the horospheres and the equidistant hypersurfaces are non-compact umbilical hypersurfaces.

We use $S_{K, \kappa}$ to denote an umbilical hypersurface  in $\mathbb{M}^{n}(K)$ with principal curvature $\kappa$. $S_{K, \kappa}$ divides  $\mathbb{M}^{n}(K)$ into two connected components. We  use $B^{{\rm int}}_{K, \kappa}$ to denote the one component whose outward normal is given by the orientation $\bar N$. The other one we denote by $B^{{\rm ext}}_{K, \kappa}$.
Let $\O\subset B^{{\rm int}}_{K, \kappa}$ be a bounded, connected open domain 
   whose boundary $\p\O=\bar{\Sigma}\cup T$, where $\S\subset B^{\rm int}_{K, \kappa}$ is smooth open hypersurface and $T\subset S_{K,\kappa}$ meets $\Sigma$ at a common $(n-2)$-dimensional submanifold $\Gamma$. We refer to Figure 1-3 in Section 2 for the corresponding domains for $K=-1$ (hyperbolic space) and different values $\kappa$. 

Since the Euclidean case $K=0$ has already been handled in \cite{GX}, and the case $\kappa=0$ in $\mathbb{M}^{n}(K)$ has been considered in \cite{Giu} (as a special case of a flat cone),
in this paper we consider the hyperbolic case $K=-1$ with $\kappa>0$ and the spherical case $K=1$ with $\kappa>0$.

We consider the following mixed BVP in $\O\subset B^{\rm int}_{K, \kappa}$:
\begin{equation}\label{bvp}
\begin{cases}{}
\bar{\Delta} u+nKu=1 , & \text{in} \ \Omega,\\
u=0 ,     & \text{on} \ \bar \Sigma,\\
 \p_{\bar N}u=\kappa u , & \text{on}\ T.
\end{cases}
\end{equation}
As we described above, $\bar{N}$ is the unit outward normal of $B^{\rm int}_{K, \kappa}$.


\par If $\kappa>0$, for a general domain, there might not exist a solution to \eqref{bvp}. Also, for a general domain, the maximum principle fails to hold. These are due to the fact that the Robin boundary condition on $T$ has an unfavorable sign.
In our case, 
 we can show that there always exists a unique non-positive solution $u\in C^\infty(\bar \O\setminus \G)\cap C^\a(\bar \O)$ to \eqref{bvp} for some $\a\in (0, 1)$, see Proposition \ref{existence} below.

\begin{remark}
For the other case ${\Omega} \subseteq {B}^{\rm ext}_{K, \kappa}$, $-\bar{N}$ plays the role of the unit outward normal of $B^{\rm ext}_{K, \kappa}$ along $T$. Hence the Robin boundary condition becomes $\p_{(-\bar N)}u=-\kappa u$ on $T$, which has a good sign, i.e. $-\kappa<0$, according to the classical elliptic PDE theory. The existence of weak solution \eqref{bvp} follows directly from the Fredholm alternative theorem (see for example \cite{GT}).
\end{remark}


In this paper, we study the following partially overdetermined BVP in $\O\subset B^{{\rm int}}_{K, \kappa} \quad (B^{{\rm ext}}_{K, \kappa} \hbox{ resp.})$:\begin{equation}\label{overd}
\begin{cases}{}
\bar{\Delta} u+nKu=1 , & \text{in} \ \Omega,\\
u=0 ,     & \text{on} \ \bar \Sigma,\\
 \p_{\nu}u=c, & \text{on}\ \bar \Sigma,\\
 \partial_{\bar{N}}u=\kappa u , & \text{on}\ T.
\end{cases}
\end{equation}
where $\nu$ is the outward unit normal of $\S$.
Our main result is the following
\begin{theorem}\label{mainthm}
Let $\O\subset B^{{\rm int}}_{K, \kappa} \quad (B^{{\rm ext}}_{K, \kappa} \hbox{ resp.})$.
Assume the partially overdetermined BVP \eqref{overd} admits a weak solution $u\in W_0^{1,2}(\O, \S)$, i.e.,
\begin{eqnarray}\label{weak-form0}
&& \int_\O \left(\bar{g}(\bar{\nabla} u, \bar{\nabla} v) +v -nKuv\right)  dx -\kappa\int_T uv \, dA=0, \hbox{ for all } v\in W_0^{1,2}(\O, \S)
\end{eqnarray}together with an additional boundary condition $\p_\nu u=c$ on $\S$.
Assume further that
\begin{eqnarray}\label{reg-ass}
u\in W^{1,\infty}(\O)\cap W^{2,2}(\O).
\end{eqnarray}
\begin{itemize}
  \item [(i)] If $S_{K, \kappa}$ is a horosphere $(K=-1$ and $\kappa=1)$ or an equidistant hypersurface $(K=-1$ and $0<\kappa<1)$ in $\hh^n$, then $\Sigma$ must be part of an umbilical hypersurface with principal curvature $1/(nc)$ which intersects $S_{K, \kappa}$ orthogonally.

\item [(ii)]
  If $S_{K, \kappa}$ is a geodesic sphere in $\hh^n$ or $\ss^n_+$, that is $K=-1$ and $\kappa>1$ or $K=1$ and $\kappa>0$,   then the same conclusion in (i) holds provided $\Omega\subset B^{{\rm int},+}_{K, \kappa} \quad (B^{{\rm ext}, +}_{K, \kappa} \hbox{ resp.})$. 
\end{itemize}

\end{theorem}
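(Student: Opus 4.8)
The plan is to adapt Weinberger's $P$-function method to the shifted operator $\bar{\Delta}+nK$. Introduce
\[
P=|\bar{\nabla} u|^2-\tfrac{2}{n}u+Ku^2 .
\]
Using the Bochner formula together with $\bar{\Delta}u=1-nKu$ and $\mathrm{Ric}=(n-1)K\,\bar{g}$, a direct computation gives the exact identity
\[
\tfrac12\,\bar{\Delta}P=|\bar{\nabla}^2u|^2-\tfrac1n(\bar{\Delta}u)^2\ \ge\ 0 ,
\]
so $P$ is subharmonic, and the Cauchy--Schwarz inequality $|\bar{\nabla}^2u|^2\ge\frac1n(\bar{\Delta}u)^2$ is an equality precisely when
\begin{equation}
\bar{\nabla}^2u+Ku\,\bar{g}=\tfrac1n\,\bar{g}. \tag{$\star$}
\end{equation}
The whole theorem reduces to showing that $(\star)$ holds throughout $\Omega$, equivalently that the nonnegative quantity $\int_\Omega\big(|\bar{\nabla}^2u|^2-\frac1n(\bar{\Delta}u)^2\big)\,dx$ vanishes.

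To this end I integrate the identity and apply the divergence theorem, so that
\[
\int_\Omega\Big(|\bar{\nabla}^2u|^2-\tfrac1n(\bar{\Delta}u)^2\Big)\,dx=\tfrac12\int_{\Sigma}\partial_\nu P\,dA+\tfrac12\int_T\partial_{\bar N}P\,dA ,
\]
and evaluate the two boundary integrals using the three boundary conditions. On $\Sigma$, the conditions $u=0$ and $\partial_\nu u=c$ force $\bar{\nabla}u=c\nu$ and $\nabla_\Sigma u=0$, while the equation gives $\bar{\nabla}^2u(\nu,\nu)=1-cH_\Sigma$, whence $\partial_\nu P=\frac{2(n-1)}{n}c-2c^2H_\Sigma$. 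On $T$ I differentiate the Robin condition $\partial_{\bar N}u=\kappa u$ tangentially and use that $S_{K,\kappa}$ is umbilical, $\bar{\nabla}_e\bar{N}=\kappa e$; this forces the mixed Hessian $\bar{\nabla}^2u(\bar N,e)=0$ for every $e$ tangent to $T$, so $\partial_{\bar N}P$ on $T$ depends only on $u$ and $\bar{\nabla}^2u(\bar N,\bar N)$. Decomposing the equation along $T$ replaces $\bar{\nabla}^2u(\bar N,\bar N)$ by an expression containing the intrinsic Laplacian $\Delta_T u$, and integrating this term by parts over $T$ produces $\int_T|\nabla_T u|^2\,dA$ together with a conormal integral along $\Gamma$ that vanishes because $u\equiv0$ on $\Gamma\subset\bar\Sigma$.

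It remains to prove that the total boundary integral is nonpositive. I would close the sign by combining the above with the integral identity obtained by integrating the equation over $\Omega$, the identity obtained by testing the weak formulation with $u$ itself (which is admissible since $u|_{\bar\Sigma}=0$), and a Pohozaev/Rellich-type identity generated by the closed conformal field $\bar{\nabla}V$, where $V$ is a concircular auxiliary function adapted to $S_{K,\kappa}$ satisfying $\bar{\nabla}^2V=-KV\,\bar{g}$; a Minkowski-type formula in $\mathbb{M}^n(K)$ then controls the term $\int_\Sigma H_\Sigma\,dA$. The weight $V$ must be positive on $\Omega$ for these identities to carry the right sign. For a horosphere or an equidistant hypersurface the natural $V$ is positive on all of $B^{\rm int}_{K,\kappa}$, so case (i) needs no extra hypothesis; for a geodesic sphere $V$ changes sign and positivity is guaranteed only on the half-domain $B^{\rm int,+}_{K,\kappa}$, which is exactly the restriction imposed in case (ii). (The case $\Omega\subset B^{\rm ext}_{K,\kappa}$ is entirely parallel, with $-\bar N$ as outward normal.) Once the boundary integral is shown to be $\le0$ it must vanish, and then so does $\int_\Omega\big(|\bar{\nabla}^2u|^2-\frac1n(\bar{\Delta}u)^2\big)\,dx$, giving $(\star)$.

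From $(\star)$ the geometric conclusion is immediate. Since $\bar{\nabla}u=c\nu\neq0$ and $u=0$ on $\Sigma$, there $(\star)$ reduces to $\bar{\nabla}^2u=\frac1n\bar{g}$, and reading the second fundamental form off the Hessian shows $\Sigma=\{u=0\}$ is umbilical with principal curvature $1/(nc)$. Orthogonality with $S_{K,\kappa}$ is then automatic: on $\Gamma$ one has $u=0$, so the Robin condition gives $\partial_{\bar N}u=\kappa u=0$, while $\bar{\nabla}u=c\nu$; hence $c\,\bar{g}(\nu,\bar N)=0$ and $\nu\perp\bar N$. I expect the main obstacle to be the rigorous justification of these integrations by parts: $u$ is assumed only in $W^{1,\infty}(\Omega)\cap W^{2,2}(\Omega)$ and is smooth up to the boundary only away from the corner $\Gamma$, so the Reilly/Pohozaev-type identities and the conormal integrals along $\Gamma$ must be obtained by an approximation argument --- exhausting $\Omega$ by subdomains that avoid a shrinking neighborhood of $\Gamma$ and controlling the error terms by the $W^{1,\infty}\cap W^{2,2}$ bound and the $C^\alpha$-regularity up to $\Gamma$ supplied by Proposition~\ref{existence}. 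Showing that the corner contributions vanish in the limit, and securing the sign of the $T$-integral through the positivity of $V$, are where the real work lies.
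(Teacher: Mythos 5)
Your setup is correct and matches the paper's: the $P$-function, the identity $\tfrac12\bar{\Delta}P=|\bar{\nabla}^2u|^2-\tfrac1n(\bar{\Delta}u)^2\ge 0$, the equality characterization $(\star)$, the Hessian property $\bar{\nabla}^2u(\bar N,e^T)=0$ on $T$ (the paper's \eqref{bdry-prop}), and the observation that positivity of $V$ is what distinguishes case (i) from the half-ball restriction in case (ii). But the core of your argument --- showing that $\int_\Sigma \p_\nu P\,dA+\int_T\p_{\bar N}P\,dA\le 0$ --- is not carried out, and the route you sketch for it has a genuine obstruction. First, the Minkowski-type formula you invoke to control $\int_\Sigma H_\Sigma\,dA$ is not available here: for a hypersurface $\Sigma$ with boundary $\Gamma$, that formula carries a line integral along $\Gamma$ depending on the contact angle between $\Sigma$ and $S_{K,\kappa}$, and orthogonality is precisely the \emph{conclusion} of the theorem, not a hypothesis (the paper stresses that no a priori contact angle is assumed, which is why reflection arguments are also unavailable). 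Second, your $T$-integral is not signed: writing $\bar{\nabla}^2u(\bar N,\bar N)$ via $\Delta_T u$ and integrating by parts produces $+2\kappa\int_T|\nabla_T u|^2\,dA$, a \emph{positive} term, and in the equidistant case ($0<\kappa<1$, $K=-1$) the $u^2$-terms come out as $+2(n-1)\kappa(1-\kappa^2)\int_T u^2\,dA>0$ as well; moreover the remaining term $\tfrac{2(n-1)\kappa}{n}\int_T u\,dA$ is only favorable if one knows $u\le 0$, a maximum-principle fact (the paper's Proposition \ref{mp}, which is nontrivial here because the Robin coefficient $\kappa>0$ has the unfavorable sign) that your proposal never establishes. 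Since for the model solution the total boundary integral is exactly zero, proving it is $\le 0$ is equivalent to the full theorem, and no cancellation mechanism is exhibited.

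The paper's actual mechanism avoids all of this by weighting before integrating: instead of $\int_\Omega\bar{\Delta}P$, it establishes $\int_\Omega Vu\,\bar{\Delta}P\,dx=0$ (Proposition \ref{int-id}). The factor $u$ makes the $\Sigma$-boundary contribution $(c^2-P)Vu_\nu$ vanish identically because $P\equiv c^2$ on $\Sigma$, so no mean-curvature integral and no Minkowski formula ever enter; the $T$-contributions cancel exactly using the Robin condition, umbilicity of $S_{K,\kappa}$, the properties $\bar{\nabla}^2V=-KV\bar g$, $\p_{\bar N}V=\kappa V$, and the weighted Pohozaev identity $\int_\Omega V(P-c^2)\,dx=0$ (Proposition \ref{Poho}), which is itself proved with the conformal Killing field $X$ \emph{tangent} to $S_{K,\kappa}$ rather than with $\bar{\nabla}V$. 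The conclusion then follows pointwise: $V>0$, $u<0$ (Proposition \ref{mp}) and $\bar{\Delta}P\ge 0$ give $Vu\bar{\Delta}P\le 0$, so the vanishing integral forces $\bar{\Delta}P\equiv 0$ and hence $(\star)$. To repair your proof you would need to replace the unweighted divergence-theorem step by this weighted identity (or something playing the same role), and to add a proof of the sign of $u$; as written, the proposal reduces the theorem to an inequality that is neither proved nor provable by the tools you list.
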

Here $B^{{\rm int},+}_{K, \kappa}$ means a half ball, see \eqref{half-ball} below.

\begin{remark}
The above umbilical hypersurface could be a horosphere, an equidistant hypersurface or a geodesic ball. We will give an example in Appendix \ref{appendix} that $\Sigma$ and $T$ are part of two orthogonal horospheres, for which the partially overdetermined BVP \eqref{overd} still admits a solution.

\end{remark}



%

 We remark that we do not assume $\S$ meets $S_{K, \kappa}$ orthogonally a priori. Thus it is impossible to use the Alexandrov reflection method as Ros-Souam \cite{RS}.
On the other hand, since the lack of regularity of $u$ on $\G$, it is difficult to use the maximum principle as Weinberger's \cite{We}.
Higher order regularity up to the interface $\G=\bar \S\cap \bar T$  is a subtle issue for mixed boundary value problems. A regularity result by Lieberman \cite{Liebm} shows that a weak solution $u$ to \eqref{overd} belongs to $C^\infty(\bar \O\setminus \G)\cap C^\a(\bar \O)$ for some $\a\in (0,1)$.
The regularity assumption \eqref{reg-ass} is for technical reasons, that is, we will use an integration method which requires \eqref{reg-ass} to perform integration by parts.

Similar to the Euclidean case \cite{GX}, we use a purely integral method to prove our theorem. The integration makes use of a non-negative weight function $V$, which is given by a multiplier of the divergence of a conformal Killing vector field $X$. 
In the case of geodesic spheres in $\hh^n$ or $\ss^n_+$, we use $X$ defined by \eqref{cfkill1}, which was found in \cite{WX}. In the case of horospheres or equidistant hypersurfaces in $\hh^n$, we use $X$ defined by \eqref{cfkill2}.
The common feature of such conformal Killing vector fields is that it is parallel to the support hypersurfaces.

  By using $X$, we get a Pohozaev-type identity with weight $V$, Proposition \ref{Poho}. Then with the usual $P$-function $P=|\bar{\nabla}u|^2-\frac{2}{n}u+Ku^{2}$, we can show the identity
   {$$\int_\O V u \big|(\bar{\n}^2 u+Kug)-\frac1n (\bar{\Delta} u+nKu) \bar g\big|^2 dx=0.$$} Theorem \ref{mainthm} follows since the $P$-function is subharmonic.

 \medskip

In the second part of this paper, we will use the solution to \eqref{bvp} to study Alexandrov type theorem for embedded free boundary CMC hypersurfaces  in $\hh^n$ supported on  a horosphere or an equidistant hypersurface.

 It is nowadays a routine argument to combine a Minkowski type formula and  a sharp Heintze-Karcher-Ros type inequality to prove Alexandrov type theorem, see e.g. \cite{LX,QX2,Ro, WX}.
In the spirit of Wang-Xia \cite{WX}, we shall first use the solution to \eqref{bvp} to prove the following Heintze-Karcher-Ros type inequality for free boundary hypersurfaces in $\hh^n$ supported on  a horosphere or an equidistant hypersurface. The case of geodesic hyperplane in space forms has been proved by Pyo, see \cite[Theorem 4 and Theorem 10]{PJ}. The case of geodesic spheres in space forms has been shown by Wang-Xia, see \cite[Theorem 5.2 and Theorem 5.4]{WX}.
\begin{theorem}\label{HK}

Let $\hh^n$ be given by the half space model $\{x\in \rr^n_+: x_n>0\}$ with hyperbolic metric $\bar g=\frac{1}{x_n^2}\delta$. Let $\S\subset \hh^n$ be an embedded smooth hypersurface whose boundary $\p \S$ lies on a support hypersurface $S$ (that is, a horosphere or an equidistant hypersurface). Assume $\S$ intersects $S$ orthogonally.
Assume $\S$ has positive normalized mean curvature $H_{1}$ and let $\O$ be the enclosed domain by $\S$ and $S$. Then
\begin{eqnarray}\label{HKR2}
\int_{\S} \frac{1}{x_{n}\cdot H_{1}} dA\ge  \int_\O \frac{n}{x_{n}} dx.
\end{eqnarray}
Moreover, the above equality \eqref{HKR2} holds if and only if $\S$ is part of an umbilical hypersurface which meets $S$ orthogonally.
\end{theorem}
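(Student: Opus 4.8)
The plan is to run a weighted Reilly/Ros-type argument, taking as weight the static potential $V=1/x_n$ and as auxiliary function the solution of the mixed problem \eqref{bvp}. In the half-space model $\bar{g}=x_n^{-2}\delta$ one checks that $\bar{\nabla}^2 V=V\bar{g}$, hence $\bar{\Delta}V=nV$, and that along any horosphere or equidistant hypersurface $S$ of principal curvature $\kappa$ one has $\p_{\bar N}V=\kappa V$ on $T$, matching the Robin coefficient. First I would use Proposition \ref{existence} to obtain $f$ with $\bar{\Delta}f=1+nf$ in $\O$ (recall $K=-1$), $f=0$ on $\S$ and $\p_{\bar N}f=\kappa f$ on $T$. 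Multiplying $1=\bar{\Delta}f-nf$ by $V$, integrating and using Green's identity with $\bar{\Delta}V=nV$ yields $\int_\O V\,dV=\int_{\p\O}(V\p_\nu f-f\p_\nu V)\,dA$; on $\S$ the integrand is $V\p_\nu f$ because $f=0$, and on $T$ it vanishes because $\p_{\bar N}f=\kappa f$ and $\p_{\bar N}V=\kappa V$. Thus $\int_\O V\,dV=\int_\S V\p_\nu f\,dA$.

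Next I would apply the weighted Reilly formula $\int_\O V\big((\bar{\Delta}f)^2-|\bar{\nabla}^2 f|^2\big)=\int_\O\big(\bar{\nabla}^2 V+V\,\mathrm{Ric}-(\bar{\Delta}V)\bar{g}\big)(\bar{\nabla}f,\bar{\nabla}f)+\int_{\p\O}B$, obtained by multiplying Bochner's identity by $V$ and integrating by parts twice. With $\mathrm{Ric}=-(n-1)\bar{g}$ the bulk term collapses to $-2(n-1)\int_\O V|\bar{\nabla}f|^2$. On $\S$, where $f\equiv 0$ kills all tangential derivatives, $B$ reduces to $VH(\p_\nu f)^2$ with $H=(n-1)H_1$. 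The crux is the term over $T$: since $\kappa$ is constant, $T$ is umbilical ($h^T=\kappa g^T$) and $\p_{\bar N}f=\kappa f$, one gets $\bar{\nabla}^2 f(e_\alpha,\bar N)=0$ for tangential $e_\alpha$; moreover the trace decomposition of $\bar{\Delta}V=nV$ along $T$ gives $\Delta^T V=(n-1)(1-\kappa^2)V$. After integrating by parts once along $T$ — the conormal term over $\G$ vanishing because $f=0$ there — all of this collapses to the clean value $\int_T B=(n-1)\kappa\int_T Vf^2$.

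I would then insert the trace inequality $|\bar{\nabla}^2 f|^2\ge\tfrac1n(\bar{\Delta}f)^2$ and the equation into the Reilly identity; a final integration by parts (with corner terms over $\G$ again vanishing since $f=0$) makes the interior $\int_\O V|\bar{\nabla}f|^2$, $\int_\O Vf$ and $\int_\O Vf^2$ terms cancel precisely against $\int_T B$, leaving $\int_\S VH(\p_\nu f)^2\le\tfrac{n-1}{n}\int_\O V$. Combining with the divergence identity by Cauchy--Schwarz, $\big(\int_\O V\big)^2=\big(\int_\S V\p_\nu f\big)^2\le\int_\S\tfrac{V}{H}\int_\S VH(\p_\nu f)^2\le\tfrac{n-1}{n}\int_\O V\int_\S\tfrac{V}{H}$, and since $H=(n-1)H_1$ this is exactly \eqref{HKR2}. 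For the rigidity statement, equality in the trace inequality forces $\bar{\nabla}^2 f=\tfrac1n(1+nf)\bar{g}$, so the level set $\S=\{f=0\}$ is totally umbilical; equality in Cauchy--Schwarz fixes $\p_\nu f$ and hence the principal curvature, while the free-boundary Robin condition forces the orthogonal intersection with $S$.

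The main obstacle I anticipate is the rigorous handling of the edge $\G=\bar{\S}\cap\bar{T}$, where $f$ is only $C^\a(\bar\O)$: each integration by parts, and especially the vanishing of the conormal integrals over $\G$, must be justified by an exhaustion/cutoff argument near $\G$ that exploits $f=0$ on $\G$ together with Lieberman's regularity, and this is exactly where the orthogonality hypothesis $\S\perp S$ enters. The second delicate point is establishing the exact cancellation $\int_T B=(n-1)\kappa\int_T Vf^2$ uniformly for $\kappa\in(0,1]$, which hinges on the two structural identities $\bar{\nabla}^2 f(e_\alpha,\bar N)=0$ and $\Delta^T V=(n-1)(1-\kappa^2)V$ on $T$.
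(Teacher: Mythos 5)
Your proposal is correct and follows essentially the same route as the paper's proof: solve the mixed BVP \eqref{bvp11}, use Green's identity with the weight $V=1/x_n$ (whose key properties $\bar{\nabla}^2V=V\bar g$ and $\p_{\bar N}V=\kappa V$ make the $T$-terms cancel) to get $\int_\O V\,dx=\int_\S V u_\nu\,dA$, apply Cauchy--Schwarz, and close with a weighted Reilly-type inequality giving $\int_\S V H_1 u_\nu^2\,dA\le \frac1n\int_\O V\,dx$. The only difference is presentational: the paper cites the ready-made weighted Reilly formula of \cite{LX,QX} (see \cite{WX}, Theorem 5.1), in which the boundary term over $T$ appears as $(h^{S_{K,\kappa}}-\kappa\bar g)(\cdot,\cdot)$ and vanishes immediately by umbilicity, whereas you re-derive it from Bochner's identity and check the cancellation over $T$ by hand via $\bar{\nabla}^2u(e_\a,\bar N)=0$ and the trace decomposition of $\bar\Delta V$ along $T$.
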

Using the above Heintze-Karcher-Ros type inequality, we are able to reprove  Alexandrov type theorem
for free boundary constant mean curvature or constant higher order mean curvature hypersurfaces in $\hh^n$ supported by horospheres and equidistant hypersurfaces, see Theorem \ref{higherAlex}.

We remark that the Alexandrov type theorem in this setting has been shown by \cite{RAF}, using the classical Alexandrov's reflection method, see also \cite{Wente}.  They  were also able to  handle in \cite{RAF} the general capillary hypersurfaces, that is, constant mean curvature hypersurfaces with contstant contact angle.

\

The rest of the paper is organized as follows. In Section 2, we review the conformal Killing vector fields $X$ we shall use in each case and their properties. In Section 3, we study two kinds of eigenvalue problems in $\O$ in space forms and use them to prove the existence and uniqueness of the mixed BVP \eqref{bvp}. In Section 4, we prove a weighted Pohozaev inequality and then Theorem \ref{mainthm}. In Section 5, We prove Theorem \ref{HK} and the Alexandrov type Theorem \ref{higherAlex}.

\

\section{Conformal Killing vector fields in space forms}

We first introduce the notations. Let us recall that $S_{K,\kappa}$ is an umbilical hypersurface in $\mathbb{M}^{n}(K)$ with principal curvature $\kappa\in [0, \infty)$.
\subsection{Hyperbolic space $\mathbb{H}^{n}$}\
\begin{definition}[\cite{RAF}]
In hyperbolic space $\mathbb{H}^{n}$, we call a {support hypersurface} a complete non-compact umbilical hypersurface, which means geodesic hyperplanes ($\kappa=0$), horospheres ($\kappa=1$) and equidistant hypersurfaces ($0<\kappa<1$).
\end{definition}
A horosphere is a "sphere" whose centre lies at $\partial_{\infty}\mathbb{H}^{n}$. In the upper half-space model \begin{eqnarray}\label{half-space}
\hh^n=\{x=(x_{1}, x_{2},\cdots,x_{n})\in \rr^n_+: x_n>0\},\quad \bar g=\frac{1}{x_n^2}\delta,
\end{eqnarray}
a horosphere, up to a hyperbolic isometry,  is given by the horizontal plane
\begin{equation*}\label{horo12}
L(1)=\{x\in\mathbb{R}^{n}_{+}:x_{n}=1\}
\end{equation*}
By choosing $\bar N=-E_n=(0,\cdots, 0, -1)$, the principal curvature of a horosphere is given by $\kappa=1$. We remark that a horosphere is isometric to a Euclidean plane.

An equidistant hypersurface is a connected component of the set of points equidistant from a given hyperplane. 
 In the half-space model, an equidistant hypersurface, is given by a sloping Euclidean hyperplane $\Pi$ which meets $\partial_{\infty}\mathbb{H}^{n}$ with angle $\theta$ through a point $E_{n}=(0,0,\cdots, 1)\in\mathbb{R}^{n}_{+}$, say
 $$\Pi=\{x\in\mathbb{R}^{n}_{+}: x_1\tan \theta +x_{n}=1\}$$ and by choosing $\bar N$ as the same direction as $(-\tan \theta, 0,\cdots, 0, -1)$, it's principal curvature is $\kappa=\cos\theta\in (0,1)$.

\medskip


Next we clarify the unified notation we will use in each case.
\begin{itemize}
   \item  {\bf Case 1}. If $S_{K,\kappa}$ is a geodesic sphere of radius $R$,  then $\kappa=\coth R\in (1,\infty)$ and let $B_{K, \kappa}^{\rm int}$ denote the geodesic ball enclosed by $S_{K, \kappa}$. By using the poincare ball model \begin{eqnarray}\label{poin-ball}
\mathbb{B}^{n}=\{x\in \rr^n: |x|<1\}, \quad \bar{g}=\frac{4}{(1-|x|^{2})^{2}}\delta,
\end{eqnarray}
we have up to an hyperbolic isometry,
  $$B_{K, \kappa}^{\rm int}=\left\{x\in \bb^n: |x|\le R_\rr:=\sqrt{\frac{1-\arccosh R}{1+\arccosh R}}\right\}.$$
 Moreover, we let
\begin{eqnarray}\label{half-ball}
B^{{\rm int},+}_{K, \kappa}=\left\{x\in B_{K, \kappa}^{\rm int}: x_n>0\right\}
\end{eqnarray}
 be a geodesic half ball.  See Figure 1.

  \item  {\bf Case 2}. If $S_{K,\kappa}$ is a support hypersurface, then $\kappa\in [0, 1]$.
By using the upper half-space model \eqref{half-space}, we have, up to an hyperbolic isometry,
  \begin{equation}
B_{K, \kappa}^{\rm int}=
\begin{cases}{}
\{x\in\mathbb{R}^{n}_{+}: x_n>1\},     & \text{if}\,\, \kappa=1,\\
\{x\in\mathbb{R}^{n}_{+}: x_1\tan \theta +x_{n}>1\}, & \text{if}\,\,\kappa=\cos \theta\in (0, 1).\\
\end{cases}
\end{equation}
See Figure 2 and 3.
\end{itemize}


\begin{figure}
\centering
\includegraphics[height=7cm,width=12cm]{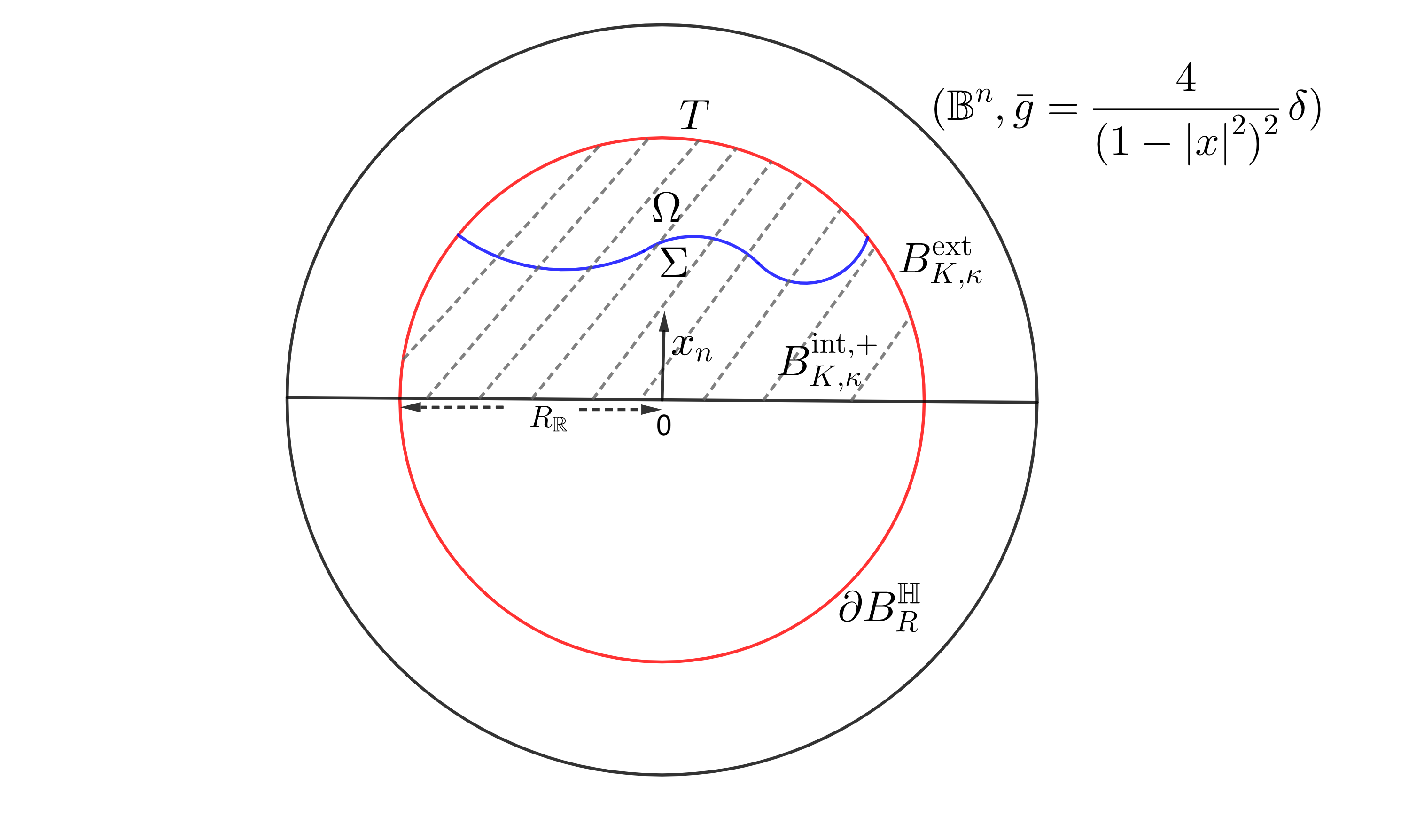}
\caption{ $S_{K, \kappa}$ is a geodesic sphere  with principal curvature $\kappa=\coth R>1$ and the shaded area is $B_{K,\kappa}^{\rm int,+}$}.
\centering
\includegraphics[height=7cm,width=13cm]{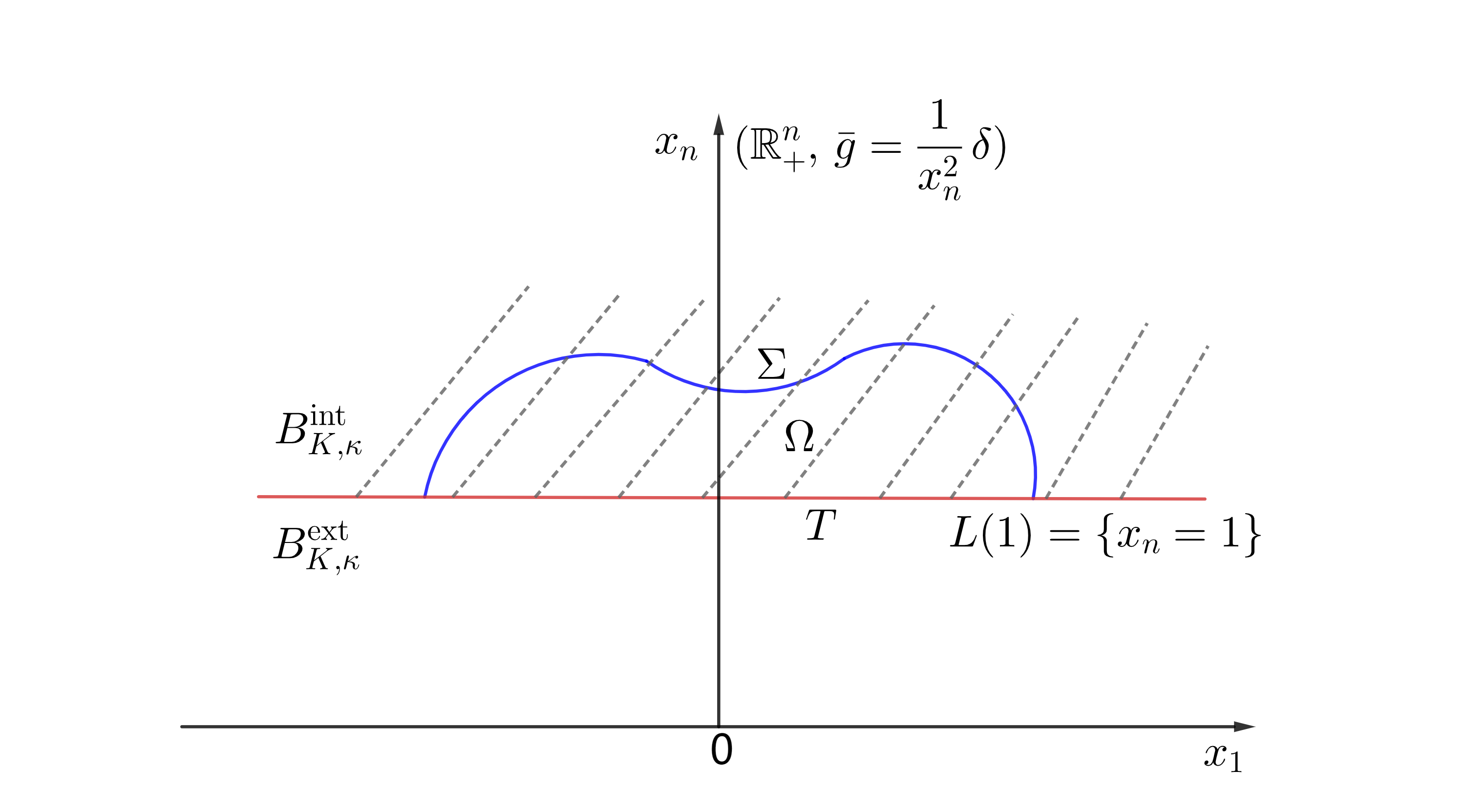}
\caption{$S_{K, \kappa}$ is a horosphere $L(1)$ with principal curvature $\kappa=1$ and the shaded area is $B_{K,\kappa}^{\rm int}$}
\centering
\includegraphics[height=7cm,width=13cm]{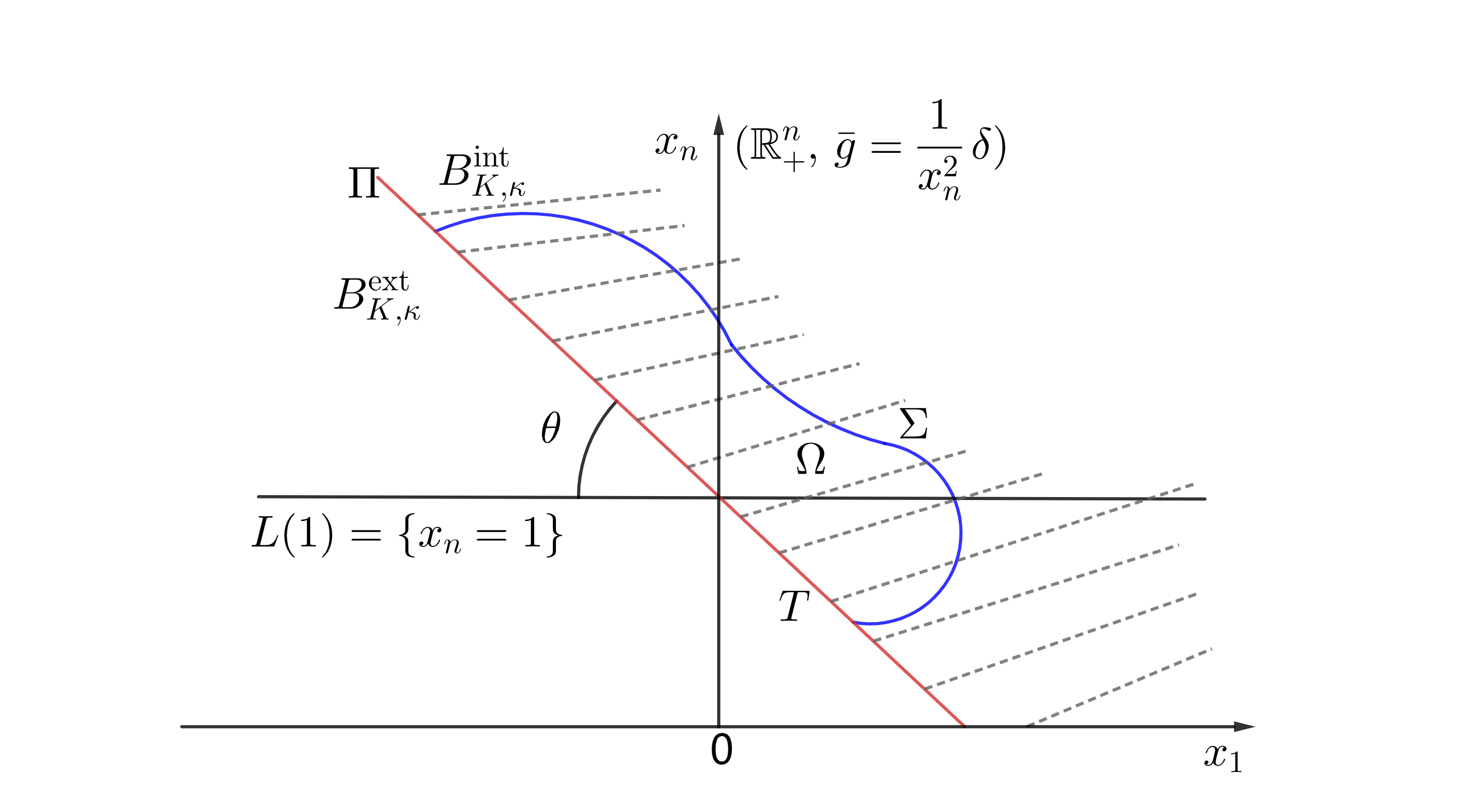}
\caption{$S_{K, \kappa}$ is an equidistant hypersurface $\Pi$ with principal curvature $\kappa=\cos\theta<1$ and the shaded area is $B_{K,\kappa}^{\rm int}$}
\end{figure}


Next we introduce the conformal Killing vector field $X$ in $\mathbb{H}^{n}$ and the weight $V$ we will use later.
\begin{itemize}
  \item   {\bf Case 1}.  $\kappa>1$. In this case, as before we use the Poincar\'e ball model \eqref{poin-ball}. Denote
  \begin{equation}\label{cfkill1}
    X:=\frac{2}{1-R_{\mathbb{R}}^{2}}[x_n x-\frac{1}{2}(|x|^{2}+R_{\mathbb{R}}^{2})E_n], \quad V=\frac{2x_n}{1-|x|^2}.\end{equation}
  \item   {\bf Case 2}. $0<\kappa\leq1$. In this case, as before we use  the upper half-space model \eqref{half-space}. Denote
  \begin{equation}\label{cfkill2}
     X:=x-E_{n}, \quad V=\frac{1}{x_n}.
  \end{equation}
\end{itemize}

\begin{prop}\label{xaa}
$(i)$\,$X$ is a conformal Killing vector field with $L_{X}\bar{g}=V\bar{g}$, namely
\begin{eqnarray}\label{XXaeq1}
\frac{1}{2}(\bar{\nabla}_{i}X_{j}+\bar{\nabla}_{j}X_{i})=V\bar{g}_{ij}.
\end{eqnarray}
\begin{itemize}
  \item [(ii)]$X\mid_{S_{K, \kappa}}$ is a tangential vector field on $S_{K, \kappa}$, i.e.,
\begin{eqnarray}\label{XXaeq2}
\bar{g}(X, \bar{N})=0 \,\,\text{\rm on}\,\, S_{K,\kappa}.
\end{eqnarray}
\end{itemize}
\end{prop}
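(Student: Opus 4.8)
The plan is to exploit the fact that both models present $\hh^n$ as a conformally flat metric $\bar{g} = e^{2\phi}\delta$ on a domain of $\rr^n$, with $\phi = -\log x_n$ in the half-space model \eqref{half-space} and $\phi = \log 2 - \log(1-|x|^2)$ in the Poincar\'e ball model \eqref{poin-ball}. For such a metric the Lie derivative needs no Christoffel symbols: starting from $(L_X\bar{g})_{ij} = X^k\partial_k\bar{g}_{ij} + \bar{g}_{kj}\partial_iX^k + \bar{g}_{ik}\partial_jX^k$ and factoring out $e^{2\phi}$, one finds
\begin{equation*}
(L_X\bar{g})_{ij} = e^{2\phi}\big[\partial_iX_j + \partial_jX_i + 2(X^k\partial_k\phi)\,\delta_{ij}\big],
\end{equation*}
where $X_j = \delta_{jk}X^k$ denotes the Euclidean lowering. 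Since \eqref{XXaeq1} is exactly the assertion $(L_X\bar{g})_{ij} = 2V\bar{g}_{ij}$, part (i) reduces to the purely Euclidean identity $\partial_iX_j + \partial_jX_i + 2(X^k\partial_k\phi)\delta_{ij} = 2V\delta_{ij}$. The conceptual reason this holds is that in each case $X$ generates the Euclidean conformal (M\"obius) group --- a dilation about $E_n$ in Case 2, and a special conformal transformation in the $E_n$ direction combined with a translation in Case 1 --- so that $L_X\delta = \lambda\,\delta$ for a scalar $\lambda$; a conformal field of $\delta$ is automatically conformal for any $e^{2\phi}\delta$, and only the factor $V$ must be identified.

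First I would carry out Case 2 ($0<\kappa\le 1$), where $X = x - E_n$ gives $\partial_iX_j + \partial_jX_i = 2\delta_{ij}$ and $\partial_k\phi = -\delta_{kn}/x_n$, so that $2X^k\partial_k\phi = 2/x_n - 2$; adding these yields $(2/x_n)\delta_{ij} = 2V\delta_{ij}$, as required. Then I would treat Case 1 ($\kappa>1$): writing $c_0 = 2/(1-R_{\mathbb{R}}^2)$ and differentiating $X^j = c_0[x_nx_j - \tfrac12(|x|^2+R_{\mathbb{R}}^2)\delta_{jn}]$, the cross terms of the form $\delta_{in}x_j$ cancel upon symmetrization and leave $\partial_iX_j + \partial_jX_i = 2c_0x_n\,\delta_{ij}$. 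With $\partial_k\phi = 2x_k/(1-|x|^2)$, a short computation gives $2X^k\partial_k\phi = 2c_0x_n(|x|^2-R_{\mathbb{R}}^2)/(1-|x|^2)$; combining the two contributions over the common denominator $1-|x|^2$ and using $c_0(1-R_{\mathbb{R}}^2) = 2$ collapses the numerator to $4x_n/(1-|x|^2) = 2V$, which proves \eqref{XXaeq1}.

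For part (ii) I would simply evaluate $\bar{g}(X,\bar{N})$ on each support hypersurface, using that $\bar{g}(X,\bar{N}) = e^{2\phi}\,\delta(X,\bar{N})$ is a positive multiple of the Euclidean inner product and that $\bar N$ is proportional to the Euclidean normal. For the horosphere $L(1) = \{x_n=1\}$ with $\bar{N} = -E_n$, one has $\delta(x-E_n,-E_n) = 1-x_n$, which vanishes on $L(1)$; for the equidistant hypersurface $\Pi = \{x_1\tan\theta + x_n = 1\}$ with $\bar{N}$ parallel to $(-\tan\theta,0,\dots,0,-1)$, one gets $-(x_1\tan\theta + x_n) + 1 = 0$ on $\Pi$; and for the geodesic sphere $\{|x| = R_{\mathbb{R}}\}$ with $\bar{N}$ radial, $\delta(X,x) = c_0x_n\cdot\tfrac12(|x|^2-R_{\mathbb{R}}^2)$ vanishes when $|x| = R_{\mathbb{R}}$. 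The main obstacle is bookkeeping rather than conceptual: the Case 1 verification of \eqref{XXaeq1} requires care to see the off-diagonal terms cancel and that the conformal-factor contribution combines exactly with the constant $c_0$ and the radius $R_{\mathbb{R}}$; once the relation $c_0(1-R_{\mathbb{R}}^2)=2$ is invoked the identity falls out cleanly.
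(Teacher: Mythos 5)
Your proposal is correct, and its core mechanism is the same one the paper uses: exploit that both models are conformally flat, so the symmetrized derivative of $X$ splits into a Euclidean part plus a term coming from the conformal factor. The difference is in packaging and in coverage. The paper proves only Case 2 ($0<\kappa\le 1$) by hand, working in the hyperbolic orthonormal frame $e_i=x_nE_i$ and invoking the connection-transformation formula \eqref{YZ} to write $\tfrac12(\bar\nabla_iX_j+\bar\nabla_jX_i)=\tfrac12(D_iX_j+D_jX_i)+X(-\ln x_n)\bar g_{ij}$; for Case 1 ($\kappa>1$) it simply cites \cite{WX}, Proposition 4.1, and likewise part (ii) in Case 2 is the one-line conformal computation $\bar g(X,\bar N)=\tfrac1{x_n}\langle x-E_n,N_\delta\rangle=0$ that you also give. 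You instead use the Christoffel-free coordinate formula for $L_X\bar g$ on $\bar g=e^{2\phi}\delta$, which is the same identity in different clothing, but you carry out Case 1 explicitly in the Poincar\'e ball model (the cancellation of the off-diagonal terms, the identification $2X^k\partial_k\phi=2c_0x_n(|x|^2-R_{\mathbb R}^2)/(1-|x|^2)$, and the collapse via $c_0(1-R_{\mathbb R}^2)=2$ are all correct, as are the three normal-vector evaluations in part (ii)). What your version buys is self-containedness and uniformity: one computation template handles the geodesic-sphere, horosphere, and equidistant cases at once, whereas the paper's proof is complete only modulo the external reference. One cosmetic remark: the proposition's prose ``$L_X\bar g=V\bar g$'' is off by a factor of $2$ from the displayed \eqref{XXaeq1}; you correctly read \eqref{XXaeq1} as $(L_X\bar g)_{ij}=2V\bar g_{ij}$, which is the normalization actually used later (e.g.\ ${\rm div}\,X=nV$), so your interpretation is the right one.
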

\begin{proof}\ Case 1. $\kappa>1$, see \cite[Proposition 4.1]{WX}.

Case 2. $0<\kappa\leq1$, we choose an orthonormal basis $\{e_{i}\}_{i=1}^{n}$ in the upper half-space model, 
\begin{equation*}\label{ortho-horo}
  e_{i}=x_{n}E_{i},\quad i=1,\cdots, n.
\end{equation*}
where $\{E_{i}\}_{i=1}^{n}$ is the Euclidean orthonormal basis in $\mathbb{R}^{n}$. Then
\begin{eqnarray}\label{XXaeq1pf}
\frac{1}{2}(\bar{\nabla}_{i}X_{j}+\bar{\nabla}_{j}X_{i})&=&\frac{1}{2}(D_{i}X_{j}+D_{j}X_{i})+X(-\ln x_{n})\bar{g}_{ij}\\
&=&\bar{g}_{ij}+(\frac{1}{x_{n}}-1)\bar{g}_{ij}=\frac{1}{x_{n}}\bar{g}_{ij},\nonumber
\end{eqnarray}
where $D$ is the Levi-Civita connection in $\mathbb{R}^{n}$. We use the relationship of $\bar{\nabla}$ and $D$, that is,
\begin{equation}\label{YZ}
  \bar{\nabla}_{Y}Z=D_{Y}Z+Y(-\ln x_{n})Z+Z(-\ln x_{n})Y-\langle Y,Z\rangle D(-\ln x_{n}).
\end{equation}
Here $\langle\cdot, \cdot\rangle$ is the Euclidean inner product.

On the other hand,
\begin{eqnarray}\label{XXaeq2pf}
\bar{g}(X, \bar{N})=\frac{1}{x_{n}}\langle x-E_{n}, N_{\delta}\rangle=0\quad \text{on}\,\,S_{K, \kappa}
\end{eqnarray}
where $N_{\delta}$ is outward normal to support hypersurface with respect to the Euclidean metric $\delta$.
\end{proof}

\begin{prop}\label{xaa2} \,$V$ satisfies the following properties:
\
\begin{equation}\label{va2}
  \bar{\n}^2 V= -KV  \bar{g },
\end{equation}
\begin{equation}\label{va3}
  \partial_{\bar{N}}V=\kappa V \quad\,\,\text{on}\,\, S_{K,\kappa},
\end{equation}
where $\bar N$ is the outward unit normal of $B_{K,\kappa}^{\rm int}$.
\end{prop}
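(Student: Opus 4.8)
The plan is to verify both identities by direct computation in the conformal model adapted to each case, reducing everything to Euclidean calculus through the connection formula \eqref{YZ}. The crucial preliminary step is to compute the $\bar g$-gradient $\bar{\n}V$ explicitly. In Case 2 (half-space model, $\phi=-\ln x_n$) one finds the convenient fact that $\bar{\n}V=-E_n$ is a \emph{constant} Euclidean vector field, while in Case 1 (Poincar\'e ball model) a short quotient-rule computation gives $\bar{\n}V=x_n x+\tfrac{1-|x|^2}{2}E_n$. These two formulas are the engine for both parts.

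For the Hessian identity \eqref{va2} I would use $\bar{\n}^2V(Y,Z)=\bar g(\bar{\n}_Y\bar{\n}V,Z)$, so it suffices to differentiate the gradient once more. In Case 2, since $-E_n$ is Euclidean-parallel the $D_Y$ term in \eqref{YZ} drops out, and the formula collapses to
\begin{equation*}
\bar{\n}_Y(-E_n)=(Y\phi)(-E_n)+((-E_n)\phi)Y-\langle Y,-E_n\rangle_\delta D\phi=\tfrac{1}{x_n}Y=VY,
\end{equation*}
where the two $E_n$-terms cancel; hence $\bar{\n}^2V=V\bar g=-KV\bar g$ since $K=-1$. In Case 1 the same scheme applies: one inserts the explicit $\bar{\n}V$ into \eqref{YZ} with the ball-model conformal factor $\phi=\ln\frac{2}{1-|x|^2}$ and checks, after the analogous cancellations, that $\bar{\n}_Y\bar{\n}V=VY$. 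Conceptually, \eqref{va2} is nothing but the Obata-type equation satisfied by the conformal factor of a conformal Killing field on a space form: writing $L_X\bar g=V\bar g$ as in \eqref{XXaeq1} identifies $V$ with the conformal factor of $X$ (up to normalization), and on $\mathbb{M}^n(K)$ such a factor necessarily satisfies $\bar{\n}^2V=-KV\bar g$; I would use this as a consistency check on the explicit computation.

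For the boundary identity \eqref{va3} I would first record the $\bar g$-unit outward normal of $B^{\rm int}_{K,\kappa}$ in each model. For a support hypersurface $\bar N=x_n\,\nu_\delta$, where $\nu_\delta$ is the Euclidean-unit outward normal (namely $\nu_\delta=-E_n$ for the horosphere and $\nu_\delta=(-\sin\theta,0,\dots,0,-\cos\theta)$ for the equidistant hypersurface), while for the geodesic sphere $\{|x|=R_{\mathbb{R}}\}$ one has $\bar N=\frac{1-|x|^2}{2}\frac{x}{|x|}$. Then $\partial_{\bar N}V=\bar g(\bar{\n}V,\bar N)$ becomes a plain Euclidean inner product. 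In Case 2 this gives $\partial_{\bar N}V=\frac{1}{x_n}\langle -E_n,\nu_\delta\rangle_\delta=\frac{\cos\theta}{x_n}=\kappa V$ (with $\kappa=1$ for the horosphere), and in Case 1 it yields $\frac{(1+R_{\mathbb{R}}^2)x_n}{(1-R_{\mathbb{R}}^2)R_{\mathbb{R}}}$, which equals $\kappa V=\coth R\cdot\frac{2x_n}{1-R_{\mathbb{R}}^2}$ by the elementary relation $\coth R=\frac{1+R_{\mathbb{R}}^2}{2R_{\mathbb{R}}}$ between the hyperbolic radius $R$ and its Poincar\'e-ball radius $R_{\mathbb{R}}$.

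I expect the only genuine labor to be the Case 1 verification of \eqref{va2}: unlike the half-space model, the conformal factor in the ball model has nonzero gradient at every point, so the cancellations in \eqref{YZ} are no longer automatic and must be tracked term by term. The Obata-equation viewpoint is the cleanest way to sidestep this bookkeeping, so I would carry out the explicit ball-model computation for concreteness but lean on the conceptual statement to confirm the outcome.
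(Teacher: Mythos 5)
Your proposal is correct, and for the only case the paper actually proves (Case 2, the half-space model) it is essentially the paper's own argument: both proofs reduce to Euclidean calculus through the conformal connection formula \eqref{YZ}, exploit that $\bar{\nabla}V=-E_n$ is Euclidean-constant so that the Hessian computation collapses after the cancellation of the two $E_n$-terms, and verify \eqref{va3} from $\langle E_n,N_\delta\rangle_\delta=-\cos\theta$ on the support hypersurface. The one real difference is Case 1: there the paper gives no computation at all, simply citing Wang--Xia (\cite{WX}, Proposition 4.2), whereas you carry out the ball-model verification explicitly. Your sketched ingredients all check out: $\bar{\nabla}V=x_nx+\frac{1-|x|^2}{2}E_n$, the cancellations in \eqref{YZ} giving $\bar{\nabla}_Y\bar{\nabla}V=VY$, the normal $\bar N=\frac{1-|x|^2}{2}\frac{x}{|x|}$, and the relation $\coth R=\frac{1+R_{\mathbb{R}}^2}{2R_{\mathbb{R}}}$ (equivalently $R_{\mathbb{R}}=\tanh(R/2)$; note the paper's displayed formula for $R_{\mathbb{R}}$ with $\arccosh$ appears to be a typo). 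One small caveat on your conceptual shortcut: the statement that the conformal factor of a conformal Killing field on $\mathbb{M}^n(K)$ satisfies the Obata equation $\bar{\nabla}^2V=-KV\bar g$ is a standard fact only for $n\ge 3$ (it fails in dimension $2$, where conformal factors are merely harmonic-type and far less rigid), so it is fine as the consistency check you intend it to be, but it could not silently replace the explicit computation without that dimensional restriction.
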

\begin{proof}\ Case 1. $\kappa>1$, see \cite[Proposition 4.2]{WX}.

Case 2. $0<\kappa\leq1$. 
{ For \eqref{va2}, we take normal coordinates $\{e_{i}\}_{i=1}^{n}$ at $p$ such that $\bar{\nabla}_{e_{i}}e_{j}\mid_{p}=0$. Then
\begin{eqnarray}\label{xnhess2}
  \bar{\nabla}_{e_{i}}\bar{\nabla}_{e_{j}} V&=&\bar{\nabla}_{e_{i}}\bar{\nabla}_{e_{j}}(\frac{1}{x_{n}})=\bar{\nabla}_{e_{i}}\left(\bar{\nabla}_{e_{j}}(\frac{1}{x_{n}})\right)=-e_{i}\left(\bar{g}(E_{n}, e_{j})\right)\nonumber\\
  &=&-\bar{g}(\bar{\nabla}_{e_{i}}E_{n}, e_{j})=E_{n}(\ln x_{n})\bar{g}_{ij}=\frac{1}{x_{n}}\bar{g}_{ij},
\end{eqnarray}
where we use formula \eqref{YZ}.}

For \eqref{va3}, we compute
\begin{eqnarray}\label{xnhess3}
 \partial_{\bar{N}}V= \partial_{\bar{N}}(\frac{1}{x_{n}})=-\frac{1}{x_n^{2}}\langle E_{n},\bar{N}\rangle=-\frac{1}{x_{n}^{2}}\langle E_{n},x_{n}N_{\delta}\rangle=\frac{1}{x_{n}}\cos\theta
\end{eqnarray}
Here we use the fact that $\langle E_{n}, N_{\delta}\rangle=-\cos\theta$ on the support hypersurface $S_{K, \kappa}$.
 \end{proof}


\subsection{Spherical space $\mathbb{S}^{n}$}\

In this subsection, we sketch  the necessary  modifications in the case that the ambient space is the spherical space $\ss^{n}$.
We use the model $$\left(\rr^{n},\, \bar g_{\ss}=\frac{4}{(1+|x|^2)^2}\delta\right) $$ to represent $\ss^{n}\setminus\{\mathcal{S}\}$, the unit sphere without the south pole.
{
Therefore, if $S_{K,\kappa}$ is a geodesic sphere of radius $R$, then in the above model
$$S_{K,\kappa}=\left\{x\in \rr^n: |x|= R_{\rr}:=\sqrt{\frac{1-\cos R}{1+\cos R}}\right\}.$$}
Then $\kappa=\cot R>0$, for $R<\frac{\pi}{2}$.
Let $B^{{\rm int}}_{K, \kappa}$ be a geodesic ball enclosed by $S_{K,\kappa}$ and $B^{{\rm int},+}_{K, \kappa}$ be the geodesic half ball given by
\begin{equation}\label{half-ball-sphere}
  B^{{\rm int},+}_{K, \kappa}=\{x\in B^{{\rm int}}_{K, \kappa}:\, x_n>0 \}.
\end{equation}

Let $X$ be the vector field
 \begin{equation}\label{Sph-killing}
   X=\frac{2}{1+R_{\mathbb{R}}^{2}}[x_n x-\frac{1}{2}(|x|^{2}+R_{\mathbb{R}}^{2})E_n], \quad V=\frac{2x_n}{1+|x|^2}.
 \end{equation}
It has been shown in \cite{WX} that $X$ and $V$ also satisfy Propositions \ref{xaa} and \ref{xaa2}.


\

\section{Mixed BVP in space forms}
  From this section on, let $\O$ be a bounded, connected open domain in $B^{{\rm int}}_{K, \kappa}$ whose boundary $\p\O$ consists two parts $\bar{\Sigma}$ and $T=\p\O\setminus \bar{\Sigma}$, where $T\subset S_{K,\kappa}$ is smooth and meets $\Sigma$ at a common $(n-2)$-dimensional submanifold $\Gamma$. {If $S_{K,\kappa}$ is a geodesic sphere, then we assume further that $\O\subset B^{{\rm int},+}_{K, \kappa}$. For notation simplicity and unification, in the following sections, we use $\O\subset B^{{\rm int}}_{K, \kappa}$ to indicate that $\O\subset B^{{\rm int},+}_{K, \kappa}$ in the case that $S_{K,\kappa}$ is a geodesic sphere.}

 We consider  the following two kinds of eigenvalue problems in $\O$.\\
\noindent{\bf I. Mixed Robin-Dirichlet eigenvalue problem}
 \begin{equation}\label{mixRD_eigen}
\begin{cases}{}
\bar{\Delta} u=-\lambda u, &
\hbox{ in } \Omega,\\
u= 0,&\hbox{ on }\bar \S,\\
 \p_{\bar N}u=  \kappa u, &\hbox{ on } T.
\end{cases}
\end{equation}
The first Robin-Dirichlet eigenvalue can be variational characterized by
\begin{eqnarray}\label{lamet1}
\lambda_1=\inf_{0\ne u\in W_0^{1,2}(\O, \S)}\frac{\int_\O \bar{g}(\bar{\nabla} u, \bar{\nabla} u) dx-\kappa\int_T u^2 dA}{\int_\O u^2 dx}.
\end{eqnarray}

\noindent{\bf II. Mixed Steklov-Dirichlet eigenvalue problem.} (see e.g. \cite{Agr, BKPS})
 \begin{equation}\label{mixSD_eigen}
\begin{cases}{}
\bar{\Delta} u+nKu=0, &
\hbox{ in } \Omega,\\
u= 0,&\hbox{ on } \bar \S,\\
 \p_{\bar N}u= \mu \kappa u, &\hbox{ on } T.
\end{cases}
\end{equation}
The mixed Steklov-Dirichlet eigenvalues can be considered as the eigenvalues of the Dirichlet-to-Neumann map
\begin{eqnarray*}
\mathcal{L}: &L^2(T)\to &L^2(T)\\
&u\mapsto & \frac{1}{\kappa}\p_{\bar N}\hat u
\end{eqnarray*}
where $\hat u\in W_0^{1,2}(\O, \S)$ is the extension of $u$ to $\O$ satisfying $\bar{\Delta} \hat{u}+nK\hat{u}=0$ in $\Omega$ and $\hat{u}=0$ on $\S$. According to the spectral theory for compact, symmetric linear  operators, $\mathcal{L}$ has a discrete spectrum $\{\mu_i\}_{i=1}^\infty$ (see e.g. \cite{Agr} or \cite{ BKPS}),
$$0<\mu_1\le \mu_2 \le\cdots \to +\infty.$$

The first eigenvalue $\mu_1$ can be variational characterized by
\begin{eqnarray*}
\mu_1=\inf_{0\ne u\in W_0^{1,2}(\O, \S)}\frac{\int_\O \bar{g}(\bar{\nabla} u, \bar{\nabla} u) dx-nK\int_{\Omega}u^{2}dx}{\kappa\int_T u^2 dA}.
\end{eqnarray*}

In our case, we have
\begin{prop}\label{mixSD_eigen1}\ If $S_{K,\kappa}$ is geodesic sphere in $\mathbb{H}^{n}$ or $\mathbb{S}^{n}$ and $\Omega\subseteq B_{K,\kappa}^{\rm int,+}$, then
\begin{itemize}\item[(i)] $\lambda_1(\O)\ge nK$ and $\lambda_1=nK$ if and only if $\O=B_{K,\kappa}^{\rm int,+}$.
\item[(ii)] $\mu_1(\O)\ge 1$ and $\mu_1=1$ if and only if $\O=B_{K,\kappa}^{\rm int,+}$.
\end{itemize}
\end{prop}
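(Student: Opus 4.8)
The plan is to recognize that the weight $V$ is itself the first eigenfunction of both problems on the model half-ball, and then to dominate an arbitrary competitor by $V$ via a ground-state (Picone) substitution. Taking the trace of \eqref{va2} gives $\bar{\Delta}V=-nKV$ in $B_{K,\kappa}^{\rm int}$, while \eqref{va3} gives $\partial_{\bar N}V=\kappa V$ on $S_{K,\kappa}$; moreover in these two models $V>0$ on the open half-ball and $V=0$ exactly on the equatorial hyperplane $\{x_n=0\}$. Since $B_{K,\kappa}^{\rm int,+}$ (see \eqref{half-ball}, \eqref{half-ball-sphere}) has its Dirichlet face on $\{x_n=0\}$ and its Robin face $T$ on $S_{K,\kappa}$, the function $V$ meets all the boundary data of \eqref{mixRD_eigen} with $\lambda=nK$ and of \eqref{mixSD_eigen} with $\mu=1$. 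As $V>0$, it is a first eigenfunction, so $\lambda_1(B_{K,\kappa}^{\rm int,+})=nK$ and $\mu_1(B_{K,\kappa}^{\rm int,+})=1$, which will pin down the equality case.

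For the lower bounds on a general $\O\subseteq B_{K,\kappa}^{\rm int,+}$, I would fix $0\ne u\in W_0^{1,2}(\O,\S)$ and apply the pointwise Picone identity
\[
|\bar{\nabla}u|^2-\bar{g}\left(\bar{\nabla}\left(\frac{u^2}{V}\right),\,\bar{\nabla}V\right)=\left|\bar{\nabla}u-\frac{u}{V}\bar{\nabla}V\right|^2\ge 0.
\]
Integrating over $\O$ and integrating the middle term by parts, then inserting $\bar{\Delta}V=-nKV$ in $\O$, the Dirichlet condition $u=0$ on $\bar\S$, and $\partial_{\bar N}V=\kappa V$ on $T$, yields the single master inequality
\[
\int_\O|\bar{\nabla}u|^2\,dx\ \ge\ nK\int_\O u^2\,dx+\kappa\int_T u^2\,dA.
\]
Rearranging this against the Rayleigh quotient \eqref{lamet1} gives $\lambda_1(\O)\ge nK$, and rearranging it against the analogous characterization of $\mu_1$ gives $\mu_1(\O)\ge 1$ simultaneously. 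Equality in either quotient forces equality in Picone's identity, i.e. $\bar{\nabla}(u/V)\equiv 0$, so $u=cV$ with $c\ne 0$; then $u=0$ on $\bar\S$ forces $V=0$ on $\S$, hence $\S\subset\{x_n=0\}$ and $\O=B_{K,\kappa}^{\rm int,+}$. The converse is the base case computed above.

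The main obstacle will be the rigorous justification of the substitution $w=u/V$ and of the boundary integration by parts near the degeneracy set $\{x_n=0\}\cap\bar\S$ and near the interface $\G$, where $V\to 0$ and $u$ is only $C^\alpha$. I would first establish the master inequality for $u$ supported away from $\{x_n=0\}$ and from $\G$, where $V$ is bounded below and all integrals are classical, and then pass to the general case by a density/exhaustion argument, controlling the contributions on thin collars by using the vanishing of $u$ on $\bar\S$ together with the explicit rate at which $V$ degenerates. The equality analysis needs the same care, namely checking that $u=cV$ is an admissible competitor, which is precisely the step where the identification $\O=B_{K,\kappa}^{\rm int,+}$ is forced.
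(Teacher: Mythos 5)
Your argument is correct, but apart from its first step it takes a genuinely different route from the paper's. Both you and the paper begin identically: $V$ solves \eqref{mixRD_eigen} with $\lambda=nK$ and \eqref{mixSD_eigen} with $\mu=1$ on the half-ball (by \eqref{va2}, \eqref{va3}), and being nonnegative it must be the first eigenfunction, so $\lambda_1(B_{K,\kappa}^{\rm int,+})=nK$ and $\mu_1(B_{K,\kappa}^{\rm int,+})=1$. From there the paper argues by domain monotonicity: a competitor on $\O$ is extended by zero to $B_{K,\kappa}^{\rm int,+}$, the variational characterizations give $\lambda_1(\O)\ge\lambda_1(B_{K,\kappa}^{\rm int,+})$ and $\mu_1(\O)\ge\mu_1(B_{K,\kappa}^{\rm int,+})$, and strictness for $\O\subsetneqq B_{K,\kappa}^{\rm int,+}$ is imported from Aronszajn's unique continuation theorem (for $\lambda_1$) and from \cite{BKPS}, Proposition 3.1.1 (for $\mu_1$). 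You instead work on $\O$ itself with a Picone (ground-state substitution) identity relative to $V$, obtaining the master inequality $\int_\O|\bar{\nabla}u|^2\,dx\ge nK\int_\O u^2\,dx+\kappa\int_T u^2\,dA$, which yields (i) and (ii) simultaneously, and you extract rigidity from the equality case: $u=cV$, hence $V|_{\S}=0$, hence $\S\subset\{x_n=0\}$ and $\O=B_{K,\kappa}^{\rm int,+}$ by connectedness. What your route buys is self-containedness --- no unique continuation theorem, no external Steklov monotonicity result, and one computation covering both eigenvalue problems. Its price is exactly the analytic issue you flag: $V$ degenerates precisely where the Dirichlet face can sit (for $\O=B_{K,\kappa}^{\rm int,+}$, the face $\S$ \emph{is} the equatorial disk $\{x_n=0\}$), so the quotient $u^2/V$ and the boundary integral over $\S$ are of $0/0$ type there, whereas the paper's zero-extension argument never divides by $V$ and sidesteps this entirely. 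Your approximation plan does close this gap: prove the master inequality for competitors vanishing near $\bar{\S}$, where $V$ is bounded below, then pass to the limit using density of such functions in $W_0^{1,2}(\O,\S)$, all three integrals being continuous under $W^{1,2}$-convergence (the $T$-term by trace continuity). For the equality case it is cleanest to avoid applying Picone to the minimizer at all: the Picone deficits of the approximants converge to $(\lambda_1-nK)\int_\O u^2\,dx=0$ (resp.\ the Steklov analogue), so $\bar{\nabla}u-(u/V)\bar{\nabla}V=0$ a.e.\ on every compact subset of $\O$, which already gives $u=cV$. Finally, note that your rigidity argument, like the paper's, presupposes that the infimum in \eqref{lamet1} and its Steklov analogue is attained; this is standard but should be said.
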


\begin{proof}
We proceed exactly as \cite{GX}. If $\O=B_{K,\kappa}^{\rm int,+}$, one checks that $u=V\ge 0$ indeed solves \eqref{mixRD_eigen} with $\lambda=nK$ and \eqref{mixSD_eigen} with $\mu=1$. Since $u=V$ is a non-negative solution, it must be the first eigenfunction and hence $\lambda_1(B_{K,\kappa}^{\rm int,+})=nK$ and $\mu_1(B_{K,\kappa}^{\rm int,+})=1.$

On the other hand, for $\O \subset B_{K,\kappa}^{\rm int,+}$, by the variational characterization and a standard argument of doing zero extension, one sees $\lambda_1(\O)\ge \lambda_1(B_{K,\kappa}^{\rm int,+})=nK$ and $\mu_1(\O)\ge\mu_1(B_{K,\kappa}^{\rm int,+})=1.$

{ If $\O\subsetneqq B_{K,\kappa}^{\rm int,+}$, then the Aronszajn unique continuity theorem \cite{Aron} implies $\lambda_1(\O)>\lambda_1(B_{K,\kappa}^{\rm int,+})=nK$. In fact, we extend the first Robin-Dirichlet eigenfunction $u$ in $\O$ to $\tilde{u}$ in $B_{K,\kappa}^{\rm int,+}$ by defining $\tilde{u}=0$ outside $\bar \O$. Then $\tilde{u}$ is the first Robin-Dirichlet eigenfunction in $B_{K,\kappa}^{\rm int,+}$ by its variational characterization \eqref{lamet1}. However, the Aronszajn unique continuity theorem\footnote{Let $\mathcal L$ be a second order elliptic operator with $C^{3}$ coefficients. If $\mathcal{L}u=0$ in an open connected domain $\O$ and $u=0$ in an open subset of $\O$, then $u=0$ is identically zero in $\O$.} would imply that $u=0$ is identically zero on $B_{K,\kappa}^{\rm int,+}$. This is a contradiction that $u$ is the first eigenfunction in $\O$.}

For $\mu_1$, it has been proved in  \cite[Proposition 3.1.1]{BKPS}, that $\mu_1(\O)>\mu_1(B_{K,\kappa}^{\rm int,+})=1$.
\end{proof}

\begin{prop}\label{eigen-horo}\ If $S_{K,\kappa}$ is a horosphere or an equidistant hypersurface in $\mathbb{H}^{n}$ and $\Omega\subseteq B_{K,\kappa}^{\rm int}$, then
\begin{equation}\label{horesphere}
 \lambda_{1}(\Omega)>-n \,\, \text{\rm and}\,\,\mu_1(\O)>1.
\end{equation}
\end{prop}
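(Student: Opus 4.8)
The plan is to obtain both bounds at once by using the positive function $V=1/x_n$ from \eqref{cfkill2} as a ground state for the substitution $u=V\varphi$. Recall from Proposition \ref{xaa2} that, with $K=-1$, $V$ satisfies $\bar{\Delta}V=-nKV=nV$ in $\Omega$ and the Robin condition $\partial_{\bar N}V=\kappa V$ along $T\subset S_{K,\kappa}$. The key structural point, and the reason no half-ball hypothesis is needed here (in contrast with Proposition \ref{mixSD_eigen1}), is that $V=1/x_n$ is strictly positive on the entire $B^{\rm int}_{K,\kappa}$; moreover, since $\Omega$ is hyperbolically bounded it is a compact subset of $\{x_n>0\}$ in the half-space model, so $V$ is bounded between two positive constants. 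In Proposition \ref{xaa2}'s spherical/geodesic-ball setting $V$ vanishes on the flat part of $\partial B^{{\rm int},+}_{K,\kappa}$ (playing the role of $\Sigma$), hence $V$ is an admissible first eigenfunction and equality is attained; here $V>0$ up to $\Sigma$, so $V$ is never a test function in $W_0^{1,2}(\Omega,\Sigma)$ and we will always extract a \emph{strict} inequality.

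First I would derive the ground-state identity. For $u\in W_0^{1,2}(\Omega,\Sigma)$ set $\varphi=u/V$. Writing $\bar{\nabla}u=\varphi\bar{\nabla}V+V\bar{\nabla}\varphi$, the terms other than $V^2|\bar{\nabla}\varphi|^2$ combine into $\bar{g}(\bar{\nabla}(\varphi^2 V),\bar{\nabla}V)$; integrating this by parts (all derivatives falling on the smooth $V$) and inserting $\bar{\Delta}V=nV$ together with $\varphi^2 V=u^2/V$ gives
\begin{equation*}
\int_\Omega |\bar{\nabla}u|^2\,dx-\kappa\int_T u^2\,dA+n\int_\Omega u^2\,dx=\int_\Omega V^2\left|\bar{\nabla}\left(\tfrac{u}{V}\right)\right|^2 dx\ge 0,
\end{equation*}
where the boundary term vanishes on $\Sigma$ because $u=0$, and on $T$ it contributes exactly $\kappa\int_T u^2$ via $\partial_{\bar N}V=\kappa V$. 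Reading this identity against the characterization \eqref{lamet1} (using $nK=-n$) gives $\lambda_1(\Omega)\ge -n$, and against the variational characterization of $\mu_1$ it gives $\mu_1(\Omega)\ge 1$.

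Next I would upgrade to strict inequalities. Equality above forces $\bar{\nabla}\varphi\equiv 0$, i.e. $\varphi$ constant; but $\varphi=u/V=0$ on $\Sigma\neq\emptyset$, so $\varphi\equiv 0$ and $u\equiv 0$. Hence the right-hand side is strictly positive for every nonzero $u$. For $\mu_1$ the Dirichlet-to-Neumann operator has discrete spectrum (as recalled before the proposition), so $\mu_1$ is attained by a first eigenfunction, which by the above must satisfy $\mu_1>1$. For $\lambda_1$ I would argue either by existence of a minimizer—a minimizing sequence is bounded in $W^{1,2}$ since $V$ is comparable to a constant, so the right-hand side controls $\|\varphi\|_{W^{1,2}}$—or, more quantitatively, by combining the Poincaré inequality for functions vanishing on $\Sigma$ with the two-sided bound on $V$ to obtain $\int_\Omega V^2|\bar{\nabla}\varphi|^2\ge c\int_\Omega u^2$ for some $c>0$, which improves $\lambda_1\ge -n$ to $\lambda_1\ge -n+c>-n$.

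The integration by parts producing the identity is routine; the one point demanding care is its validity up to the interface $\Gamma=\bar\Sigma\cap\bar T$, where $u$ is only $C^\alpha$. Because every derivative falls on the smooth positive $V$, this is a form-level identity holding on all of $W_0^{1,2}(\Omega,\Sigma)$ by density, so I expect the only genuine (if mild) obstacle to be the strictness step for $\lambda_1$: either verifying compactness of a minimizing sequence in the presence of the wrong-sign Robin term, or setting up the Poincaré estimate with the correct dependence on the lower bound of $V$.
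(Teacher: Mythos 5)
Your proposal is correct, and although it rests on the same integral skeleton as the paper's proof, it is organized differently and is more complete on one genuine point. The paper applies the divergence theorem to $u^{2}e_{n}$ (with $e_{n}=x_{n}E_{n}$) to get $\kappa\int_{T}u^{2}\,dA=(n-1)\int_{\Omega}u^{2}\,dx-\int_{\Omega}e_{n}(u^{2})\,dx$ as in \eqref{horo-div3}, then bounds $-e_{n}(u^{2})=-2ue_{n}(u)\le u^{2}+\bar{g}(\bar{\nabla}u,\bar{\nabla}u)$ by Young's inequality, asserts strictness, and concludes ``by taking infimum.'' Your ground-state identity is the same identity in disguise: since $\bar{\nabla}V/V=-e_{n}$ for $V=1/x_{n}$, one has $\int_{\Omega}V^{2}\bigl|\bar{\nabla}(u/V)\bigr|^{2}dx=\int_{\Omega}|\bar{\nabla}u+ue_{n}|^{2}dx$, so expanding your square recovers exactly the paper's divergence identity, with Young's inequality replaced by an exact completion of the square. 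What your packaging buys is twofold. First, the deficit is identified as $\int_{\Omega}V^{2}|\bar{\nabla}(u/V)|^{2}dx$, so the equality case is transparent: $u/V$ constant, hence $u\equiv 0$ because $u$ vanishes on $\Sigma$ and $V>0$; the paper instead simply asserts that its inequality is strict for $u\not\equiv 0$. Second, and more substantively, you correctly observe that a strict inequality for each fixed $u\neq 0$ does not by itself pass to the infimum in \eqref{lamet1}, and you supply the missing step: attainment of $\mu_{1}$ via the discreteness of the Steklov--Dirichlet spectrum, and for $\lambda_{1}$ either a direct-method minimizer or the quantitative improvement $\lambda_{1}\ge -n+c$ obtained from the Poincar\'e inequality for functions vanishing on $\Sigma$ together with the two-sided bounds on $V$ over the compact $\bar{\Omega}$. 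The paper's ``taking infimum'' glosses over precisely this point, so your argument patches a genuine (if standard) gap in the published write-up. Your remaining technical caveats are sound: the identity extends to all of $W_{0}^{1,2}(\Omega,\Sigma)$ by density (or by Gauss--Green for $W^{1,1}$ vector fields, since all derivatives fall on the smooth $V$), and the paper's own application of the divergence theorem to $u^{2}e_{n}$ faces the identical integrability issue.
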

\begin{proof}
We first take an orthonormal basis $\{e_{i}\}_{i=1}^{n}$ in the upper half space model 
\begin{equation*}\label{ortho-horo}
  e_{i}=x_{n}E_{i},\quad i=1,\cdots, n.
\end{equation*}
\noindent By using the divergence theorem, we get
\begin{eqnarray}\label{horo-div2}
\int_{\Omega}\div_{\bar{g}}(u^{2}e_{n})dx=\int_{\partial\Omega}u^{2}\bar{g}(e_{n},\nu)dA=\int_{T}u^{2}\bar{g}(e_{n},\bar{N})dA=-\cos\theta\int_{T}u^{2}dA
\end{eqnarray}
where we also use $u=0$ on $\Sigma$ and the fact $\langle\bar{N}, E_{n}\rangle=-x_{n}\cos\theta$ on $T$ and $\theta\in[0,\frac{\pi}{2})$.

\noindent On the other hand,
\begin{equation}\label{horo-div}
  \int_{\Omega}\div_{\bar{g}}(u^{2}e_{n})dx=\int_{\Omega}e_{n}(u^{2})dx+\int_{\Omega}u^{2}\div_{\bar{g}}(e_{n})dx=\int_{\Omega}e_{n}(u^{2})dx-(n-1)\int_{\Omega}u^{2}dx.
\end{equation}
Combining \eqref{horo-div2} with \eqref{horo-div}, we have
\begin{eqnarray}\label{horo-div3}
\cos\theta\int_{T}u^{2}dA&=&\int_{\Omega}(n-1)u^{2}-e_{n}(u^{2})dx=\int_{\Omega}(n-1)u^{2}-2ue_{n}(u)dx\\
&\leq&\int_{\Omega}(n-1)u^{2}+2|u||\bar{\nabla} u|dx\nonumber\\
&\leq&\int_{\Omega}(n-1)u^{2}+|u|^{2}+\bar{g}(\bar{\nabla} u, \bar{\nabla} u) dx\nonumber\\
&=&\int_{\Omega}nu^{2}+\bar{g}(\bar{\nabla} u, \bar{\nabla} u) dx\nonumber
\end{eqnarray}
Since $0\ne u\in W_0^{1,2}(\O, \S)$, we know that the above equality is strict, namely,
\begin{eqnarray}\label{horo-div4}
\cos\theta\int_{T}u^{2}dA<\int_{\Omega}nu^{2}+\bar{g}(\bar{\nabla} u, \bar{\nabla} u)dx
\end{eqnarray}
Recall that $\kappa=\cos\theta$. Therefore, we complete this proof by taking infimum for $u$.

\end{proof}

Using Proposition \ref{mixSD_eigen1} (ii) and \ref{eigen-horo}, we show the existence and uniqueness of mixed BVP \eqref{bvp}.
\begin{prop}\label{existence}Let $f\in C^\infty(\O)$, $q\in C^\infty(T)$ and $\O\subsetneqq B_{K,\kappa}^{\rm int}$. Then the mixed BVP
\begin{equation}\label{mixSD_nonh}
\begin{cases}{}
\bar{\Delta }u+nKu=f, &
\hbox{ in } \Omega,\\
u= 0,&\hbox{ on } \bar\S,\\
 \p_{\bar N}u= \kappa u+q, &\hbox{ on } T.
\end{cases}
\end{equation}
admits a unique weak solution $u\in W^{1,2}_0(\O, \S)$. Moreover, $u\in C^\infty(\bar \O\setminus \G)\cap C^\a(\bar \O)$ for some $\a\in (0,1)$.
\end{prop}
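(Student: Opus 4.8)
The plan is to reduce the existence and uniqueness of the inhomogeneous mixed BVP \eqref{mixSD_nonh} to the standard Fredholm theory for the associated bilinear form, using the spectral estimates already established in Propositions \ref{mixSD_eigen1}(ii) and \ref{eigen-horo}. First I would set up the weak formulation: a function $u\in W_0^{1,2}(\O,\S)$ is a weak solution of \eqref{mixSD_nonh} if
\begin{eqnarray*}
\int_\O\left(\bar g(\bar\nabla u,\bar\nabla v)-nKuv\right)dx-\kappa\int_T uv\,dA=-\int_\O fv\,dx+\int_T qv\,dA
\end{eqnarray*}
for all $v\in W_0^{1,2}(\O,\S)$. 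Denote the left-hand side by $a(u,v)$. The heart of the matter is to show that $a(\cdot,\cdot)$ is a bounded, coercive bilinear form on $W_0^{1,2}(\O,\S)$, after which the Lax--Milgram theorem yields a unique weak solution for any bounded linear functional on the right, in particular for the functional determined by $f$ and $q$.

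The crucial step is coercivity, and this is exactly where the eigenvalue bounds enter. In the geodesic sphere case with $\O\subseteq B^{\rm int,+}_{K,\kappa}$, Proposition \ref{mixSD_eigen1}(ii) gives $\mu_1(\O)>1$ (strict, since we will arrange $\O\subsetneqq B^{\rm int,+}_{K,\kappa}$ or invoke the variational gap), which by the variational characterization of $\mu_1$ means
\begin{eqnarray*}
\kappa\int_T u^2\,dA\le \frac{1}{\mu_1}\left(\int_\O\bar g(\bar\nabla u,\bar\nabla u)\,dx-nK\int_\O u^2\,dx\right)
\end{eqnarray*}
with $\frac{1}{\mu_1}<1$; in the horosphere and equidistant cases Proposition \ref{eigen-horo} gives the analogous strict inequality $\mu_1(\O)>1$. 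Either way one obtains a constant $\theta_0<1$ so that $\kappa\int_T u^2\,dA\le\theta_0\big(\int_\O\bar g(\bar\nabla u,\bar\nabla u)\,dx-nK\int_\O u^2\,dx\big)$, and hence $a(u,u)\ge(1-\theta_0)\big(\int_\O|\bar\nabla u|^2\,dx-nK\int_\O u^2\,dx\big)$. When $K=1$ one must further absorb the $-nK\int u^2$ term; here I would invoke $\lambda_1(\O)\ge nK$ from Proposition \ref{mixSD_eigen1}(i) together with a Poincar\'e-type inequality on $W_0^{1,2}(\O,\S)$ (the Dirichlet condition on $\S$ guarantees $\int_\O u^2\le C\int_\O|\bar\nabla u|^2$), combining them to bound $a(u,u)$ below by a positive multiple of $\|u\|^2_{W^{1,2}}$. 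When $K=-1$ the term $-nK\int u^2=n\int u^2$ has a favorable sign and coercivity is immediate. The main obstacle I anticipate is this bookkeeping in the spherical case: ensuring the strict inequality $\mu_1>1$ (or $\lambda_1>nK$) is genuinely strict for $\O\subsetneqq B^{\rm int,+}_{K,\kappa}$, as supplied by the Aronszajn unique continuation and the strict monotonicity in Proposition \ref{mixSD_eigen1}, so that $\theta_0$ can be taken strictly below $1$.

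For the uniqueness statement, I note that coercivity makes the homogeneous problem ($f=0$, $q=0$) admit only $u=0$, since $a(u,u)=0$ forces $\|u\|_{W^{1,2}}=0$; this is automatic once Lax--Milgram applies. Finally, for the regularity claim $u\in C^\infty(\bar\O\setminus\G)\cap C^\a(\bar\O)$, I would cite the interior and boundary elliptic regularity theory away from the interface $\G$ (where the Dirichlet portion $\S$ and the Robin portion $T$ meet), giving smoothness on $\bar\O\setminus\G$ since $f$ and $q$ are smooth, and then invoke the global H\"older regularity result of Lieberman \cite{Liebm} for mixed Dirichlet--Robin boundary value problems to obtain $u\in C^\a(\bar\O)$ up to and across the corner $\G$, exactly as quoted in the introduction.
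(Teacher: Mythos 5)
Your proposal is correct and takes essentially the same route as the paper: the identical weak formulation, coercivity of the bilinear form obtained from $\mu_1>1$ (Propositions \ref{mixSD_eigen1}(ii) and \ref{eigen-horo}) plus $\lambda_1\ge nK$ to absorb the zeroth-order term, then Lax--Milgram for existence and uniqueness, and finally classical elliptic regularity away from $\G$ combined with Lieberman's theorem for the global $C^\a(\bar\O)$ bound. Your extra bookkeeping in the spherical case --- noting that one needs the strict inequality $\lambda_1>nK$ (via Aronszajn unique continuation) so that the effective Poincar\'e constant beats $1/n$ --- is exactly what the paper compresses into the unexplained assertion that a positive constant $\beta$ exists in \eqref{coercive}.
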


\begin{proof} The weak solution to \eqref{mixSD_nonh} is defined to be $u\in W_0^{1,2}(\O, \S)$ such that
\begin{equation}\label{Buv}
B[u,v]:= \int_\O \bar{g}(\bar{\nabla} u, \bar{\nabla} v)  dx -\kappa\int_T uv \, dA-nK\int_{\Omega}uvdx \hbox{ for all } v\in W_0^{1,2}(\O, \S),
\end{equation}
\begin{eqnarray}\label{weak-form}
&&B[u,v]=\int_\O -fv\, dx+\int_T qv \, dA\,\hbox{ for all } v\in W_0^{1,2}(\O, \S).
\end{eqnarray}
From Proposition \ref{mixSD_eigen1}(ii) and Proposition \ref{eigen-horo}, we know $1-\frac{1}{\mu_1}>0$. There holds
\begin{equation}\label{dss}
 \kappa\int_{T}u^{2}dA\leq\frac{1}{\mu_{1}}(\int_{\Omega}\bar{g}(\bar{\nabla} u, \bar{\nabla} u)dx-nK\int_{\Omega}u^{2}dA),
\end{equation}
By using \eqref{Buv}, \eqref{dss} and Proposition \ref{mixSD_eigen1} (i) and \ref{eigen-horo}, there exists a positive constant $\beta$ such that
\begin{eqnarray}\label{coercive}
B[u, u]\ge (1-\frac{1}{\mu_1}) (\int_\O\bar{g}(\bar{\nabla} u, \bar{\nabla} u)\, dx-nK\int_{\Omega}u^{2}dx)\ge  \beta\|u\|^{2}_{W_{0}^{1,2}(\O, \S)}
\end{eqnarray}
\par Thus $B[u,v]$ is coercive on $W_0^{1,2}(\O, \S)$.
The standard Lax-Milgram theorem holds for the weak formulation to \eqref{mixSD_nonh}. Therefore,  \eqref{mixSD_nonh} admits a unique weak solution $u\in W_0^{1,2}(\O, \S)$.

The regularity $u\in  C^\infty(\bar \O\setminus \G)$ follows from the classical regularity theory for elliptic equations and $u\in C^\a(\bar \O)$ has been proved by Lieberman \cite[Theorem 2]{Liebm}.
Note that the global wedge condition in \cite[Theorem 2]{Liebm} is satisfied for the domain $\O$ whose boundary parts $\S$ and $T$ meet at a common in smooth $(n-2)$-dimensional manifold, see page 426 of  \cite{Liebm}.
\end{proof}
\begin{prop}\label{mp}Let $u$ be the unique solution to \eqref{mixSD_nonh} with $f\ge 0$ and $q\le 0$.
Then either $u\equiv0$ in $\O$ or $u<0$ in $\O\cup T$.
\end{prop}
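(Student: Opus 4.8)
The plan is to first establish non-positivity $u\le 0$ by a purely variational testing argument, and then to upgrade it to strict negativity, separately in the interior $\O$ (via the strong maximum principle) and on $T$ (via the Hopf boundary point lemma).

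For the non-positivity I would test the weak formulation \eqref{weak-form} against $v=u^{+}:=\max\{u,0\}$. Since $u=0$ on $\S$ in the trace sense, $u^{+}$ also has vanishing trace there, so $u^{+}\in W_0^{1,2}(\O,\S)$ is an admissible test function. Because $\bar{\nabla}u^{+}=\bar{\nabla}u$ on $\{u>0\}$ and vanishes elsewhere, one checks directly that $B[u,u^{+}]=B[u^{+},u^{+}]$. The coercivity estimate \eqref{coercive} from the proof of Proposition \ref{existence} then gives $B[u^{+},u^{+}]\ge \beta\|u^{+}\|_{W_0^{1,2}(\O,\S)}^{2}$. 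On the other hand, the right-hand side of \eqref{weak-form} evaluated at $v=u^{+}$ equals $-\int_\O f u^{+}\,dx+\int_T q u^{+}\,dA\le 0$, since $f\ge 0$, $q\le 0$ and $u^{+}\ge 0$. Combining the two bounds forces $u^{+}\equiv 0$, i.e. $u\le 0$ a.e., hence $u\le 0$ on $\bar\O\setminus\G$ by the continuity provided in Proposition \ref{existence}.

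To obtain the strict inequality in the interior I would rewrite the equation as $\bar{\Delta}u=f-nKu$ and invoke the strong maximum principle, the delicate point being the sign of the zeroth-order coefficient $nK$. When $K=1$ one has $-nKu=-nu\ge 0$ (as $u\le 0$), so $\bar{\Delta}u\ge f\ge 0$ and $u$ is subharmonic; the strong maximum principle for subharmonic functions shows that if $u$ attains its maximum value $0$ at an interior point then $u\equiv 0$, and otherwise $u<0$ in $\O$. When $K=-1$ the operator $\bar{\Delta}+nK=\bar{\Delta}-n$ has non-positive zeroth-order coefficient $c=-n\le 0$, so the classical strong maximum principle (see e.g. \cite{GT}) applies directly to $\bar{\Delta}u-nu=f\ge 0$ and yields the same dichotomy. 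In either case we conclude that either $u\equiv 0$ or $u<0$ throughout $\O$.

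Finally, assuming $u\not\equiv 0$ so that $u<0$ in $\O$, I would rule out $u$ vanishing on $T$ by the Hopf lemma. Suppose $u(x_0)=0$ for some $x_0$ in the relative interior of $T$; this is a point where $u$ is smooth (being away from $\G$) and where the interior sphere condition holds since $T$ is smooth. As $u<0$ in $\O$, the point $x_0$ is a strict maximum, so Hopf's boundary point lemma—valid in the subharmonic case for $K=1$, and for $c=-n\le 0$ together with $u(x_0)=0\ge 0$ when $K=-1$—gives $\p_{\bar N}u(x_0)>0$. But the Robin condition forces $\p_{\bar N}u(x_0)=\kappa u(x_0)+q(x_0)=q(x_0)\le 0$, a contradiction. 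Hence $u<0$ on $T$ as well, so $u<0$ in $\O\cup T$. The main obstacle throughout is the variable sign of the coefficient $nK$: the naive strong maximum principle fails for $K=1$, and it is precisely the previously established non-positivity $u\le 0$ that restores subharmonicity and makes both the interior step and the Hopf step go through; restricting the Hopf step to the relative interior of $T$ also sidesteps the lack of regularity at $\G$.
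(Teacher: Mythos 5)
Your proposal is correct and follows essentially the same route as the paper: you test the weak formulation with $u^{+}=\max\{u,0\}$ and use the Section~3 eigenvalue bounds to force $u^{+}\equiv 0$ (you invoke the coercivity estimate \eqref{coercive} coming from $\mu_1>1$, while the paper bounds the quadratic form below by $(\lambda_1-nK)\int_\O (u_+)^2\,dx\ge 0$ — the same idea in a slightly different dress), and then conclude by the strong maximum principle. Your second half simply spells out the maximum-principle and Hopf boundary-point details (splitting on the sign of $K$, and treating points in the relative interior of $T$ via the Robin condition) that the paper compresses into its single final sentence.
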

\begin{proof} Since the Robin boundary condition has an unfavorable sign, we cannot use the maximum principle directly.
Since $u_{+}=\text{max}\{u , 0\}\in W_0^{1,2}(\O, \S)$, we can use it as a test function in the weak formulation \eqref{weak-form} to get
\begin{alignat*}{2}
\int_{\Omega}-f u_+ \, dx + \int_T q u_+ \, dA&=\int_{\Omega}\bar{g}(\bar{\nabla} u_{+}, \bar{\nabla} u_{+}) dx-\kappa\int_T (u_+)^2\, dA-nK\int_{\Omega}(u_{+})^{2}dx.
\end{alignat*}
Since $f\ge 0$ and $q\le 0$, we have
\begin{alignat*}{2}
\int_{\Omega}-f u_+ \, dx + \int_T q u_+ \, dA\le 0.
\end{alignat*}
On the other hand, it follows from Proposition \ref{mixSD_eigen1} (i) and \eqref{horesphere} that
\begin{alignat*}{2}
\int_{\Omega}\bar{g}(\bar{\nabla} u_{+}, \bar{\nabla} u_{+}) dx-\kappa\int_T (u_+)^2\, dA-nK\int_{\Omega}(u_{+})^{2}dx \ge (\lambda_1-nK)\int_\O (u_+)^2 dx\ge 0.
\end{alignat*}
From above, we conclude that $u_+\equiv 0$, which means $u\le 0$ in $\O$. Finally, by the strong maximum principle, we get either $u\equiv0$ in $\Omega$ or $u<0$ in $\Omega\cup T$.
\end{proof}

\begin{prop} Let $e^{T}$ be a tangent vector field to $T$. Let $u$ be the unique solution to (\ref{bvp}). Then
\begin{equation}\label{bdry-prop}
 \bar{\n}^{2}u(\bar N, e^T)=0 \quad on\ T.
\end{equation}
\end{prop}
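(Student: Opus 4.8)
The plan is to evaluate the Hessian directly from its definition, using only the Robin condition $\p_{\bar N}u=\kappa u$ from \eqref{bvp} together with the umbilicity of the support hypersurface $S_{K,\kappa}$. Since $\bar{\n}^2 u$ is symmetric, it suffices to compute $\bar{\n}^2 u(e^T,\bar N)$, and I would expand it through the identity $\bar{\n}^2 u(e^T,\bar N)=e^T\big(\bar{g}(\bar{\nabla}u,\bar N)\big)-\bar{g}\big(\bar{\nabla}u,\bar{\nabla}_{e^T}\bar N\big)$, valid for a tangential field $e^T$ and the unit normal $\bar N$ along $T$. This form is convenient because it involves $\bar N$ only through its restriction to $T$ and is therefore insensitive to how $\bar N$ is extended off $T$.

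For the first term, the function $\bar{g}(\bar{\nabla}u,\bar N)=\p_{\bar N}u$ equals $\kappa u$ on $T$ by the boundary condition in \eqref{bvp}. As $e^T$ is tangent to $T$, the derivative $e^T(\p_{\bar N}u)$ depends only on the boundary values of $\p_{\bar N}u$, and since $\kappa$ is constant (the umbilical hypersurface $S_{K,\kappa}$ has constant principal curvature) this gives $e^T\big(\bar{g}(\bar{\nabla}u,\bar N)\big)=\kappa\,e^T(u)$.

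For the second term, umbilicity of $S_{K,\kappa}$ with respect to the chosen orientation $\bar N$ is exactly the relation $\bar{\nabla}_{e^T}\bar N=\kappa e^T$ along $T$ (note $\bar{\nabla}_{e^T}\bar N$ is automatically tangential since $|\bar N|=1$); I would confirm the sign of this relation in each model, in the same spirit as the computation of $\p_{\bar N}V$ in Proposition \ref{xaa2}. Then $\bar{g}(\bar{\nabla}u,\bar{\nabla}_{e^T}\bar N)=\kappa\,\bar{g}(\bar{\nabla}u,e^T)=\kappa\,e^T(u)$, and subtracting the two terms yields $\bar{\n}^2 u(\bar N,e^T)=\kappa\,e^T(u)-\kappa\,e^T(u)=0$. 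The cancellation is exact and purely algebraic; I note in passing that the very same mechanism, via $\p_{\bar N}V=\kappa V$ in \eqref{va3} and $\bar{\nabla}_{e^T}\bar N=\kappa e^T$, is what forces $\bar{\n}^2 V(e^T,\bar N)=0$, consistent with \eqref{va2}.

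The one point requiring care — the step I would treat as the main obstacle — is regularity: the identity $\p_{\bar N}u=\kappa u$ can be differentiated tangentially only where $u$ is smooth, that is, on the interior of $T$ away from the edge $\Gamma=\bar{\Sigma}\cap\bar T$. By Proposition \ref{existence} we have $u\in C^\infty(\bar\Omega\setminus\Gamma)$, so both $\p_{\bar N}u$ and $u$ are smooth functions on $T\setminus\Gamma$ and the tangential differentiation is legitimate there; hence the asserted identity \eqref{bdry-prop} holds on the interior of $T$, which is all that is needed for the integration-by-parts arguments later. Beyond this bookkeeping I anticipate no difficulty.
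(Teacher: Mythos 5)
Your proposal is correct and follows essentially the same route as the paper: differentiate the Robin condition $\p_{\bar N}u=\kappa u$ tangentially along $T$, expand via $e^T\bigl(\bar g(\bar\n u,\bar N)\bigr)=\bar\n^2 u(\bar N,e^T)+\bar g(\bar\n u,\bar\n_{e^T}\bar N)$, and use umbilicity ($\bar\n_{e^T}\bar N=\kappa e^T$, equivalently $h^{S_{K,\kappa}}=\kappa\bar g$) to cancel the two $\kappa\,e^T(u)$ terms. Your added remark that the differentiation is justified only on $T\setminus\Gamma$ by the regularity $u\in C^\infty(\bar\O\setminus\G)$ is a sensible clarification the paper leaves implicit.
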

\begin{proof}
By differentiating the equation $\p_{\bar N}u=\kappa u$ with respect to $e^{T}$, we get
\begin{alignat*}{2}\label{18}
\kappa\bar{\n}_{e^{T}}(u)&={e^{T}}(\bar{g}(\bar{\n} u,\bar N))= \bar{\n}^{2}u(\bar N, e^T)+\bar{g}(\bar{\n} u, \bar{\nabla}_{e^{T}}\bar N)\\
&=\bar{\n}^{2}u(\bar N, e^T)+h^{S_{K, \kappa}}(\bar{\nabla} u,e^{T})=\bar{\n}^{2}u(\bar N, e^T)+\kappa\bar{g}(\bar{\n} u, e^T).
\end{alignat*}
Here we use the fact that $S_{K, \kappa}$ is an umbilical hypersurface with principal curvature $\kappa$. The assertion \eqref{bdry-prop} follows.
\end{proof}

\section{Partially overdetermined BVP in space forms}
In this section we will use a method totally based on integral identities and inequalities to prove Theorem \ref{mainthm}. The proof follows closely our previous paper \cite{GX}. The main ingredient is Propositions \ref{xaa} and \ref{xaa2}.

First we introduce $P$ function as follows,
\begin{equation}\label{60}
P:=\bar{g}(\bar{\nabla} u, \bar{\nabla} u)-\frac{2}{n}u+Ku^{2}.
\end{equation}

\begin{prop}\label{sub-harm}$\bar{\Delta} P=2\big|(\bar{\n}^2 u+Kug)-\frac1n (\bar{\Delta} u+nKu) \bar g\big|^2\ge 0 \hbox{ in }\O.$
\end{prop}
\begin{proof} By direct computation and using $\bar{\Delta} u+nKu=1$,
 \begin{alignat*}{2}
\bar{\Delta} P(x)&= 2|\bar{\n}^2 u|^{2}+2\bar{g}(\bar{\n} u, \bar{\n} \bar{\De} u)+2\overline{Ric}(\bar{\nabla} u,\bar{\nabla} u)-\frac{2}{n}\bar{\Delta} u+2K(\bar{g}(\bar{\nabla} u, \bar{\nabla} u)+u\bar{\Delta} u)
\\&=2\big|(\bar{\n}^2 u+Kug)-\frac1n (\bar{\Delta} u+nKu) \bar g\big|^2
\\&\ge 0.
\end{alignat*}
\end{proof}
Due to the lack of regularity, we need the following formula of integration by parts, see \cite[Lemma 2.1]{PA}. (The original statement \cite[Lemma 2.1]{PA} is for a sector-like domain in a cone. Nevertheless, the proof is applicable in our case). We remark that a general version of integration-by-parts formula for Lipschitz domains has been stated in some classical book by Grisvard \cite[Theorem 1.5.3.1]{Grisv}. However, it seems not enough for our purpose.

\begin{prop}[\cite{PA}, lemma 2.1]\label{int-by-parts}
Let $F:\O \to \rr^n$ be a vector field such that
\begin{eqnarray*}F\in C^1(\O\cup \S\cup T)\cap L^2(\O)\quad \hbox{ and } \quad \div(F)\in L^1(\O).
\end{eqnarray*}
Then \begin{eqnarray*}
\int_\O \div(F) dx=\int_{\S} \bar{g}(F, \nu) dA+ \int_{T} \bar{g}(F, \bar N) dA.
\end{eqnarray*}
\end{prop}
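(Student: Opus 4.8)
The plan is to prove the formula by exhausting $\O$ with smooth subdomains that stay away from the singular edge $\G=\bar\S\cap\bar T$, applying the classical divergence theorem on each, and then passing to the limit. Let $d(x):=\mathrm{dist}_{\bar g}(x,\G)$ and for small $s>0$ set $\O_s:=\{x\in\O:\ d(x)>s\}$. By Sard's theorem, for almost every $s$ the level set $\{d=s\}$ is a smooth hypersurface meeting $\S$ and $T$ transversally, so $\O_s$ has piecewise-smooth boundary $\p\O_s=\S_s\cup T_s\cup C_s$, where $\S_s=\S\cap\p\O_s$, $T_s=T\cap\p\O_s$, and $C_s=\{d=s\}\cap\O$ is the collar cut out around $\G$. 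Since $F\in C^1(\O\cup\S\cup T)$ and $\overline{\O_s}$ is a compact subset of $\O\cup\S\cup T$ lying a definite distance from $\G$, the classical divergence theorem applies on $\O_s$ and gives
\[
\int_{\O_s}\div(F)\,dx=\int_{\S_s}\bar g(F,\nu)\,dA+\int_{T_s}\bar g(F,\bar N)\,dA+\int_{C_s}\bar g(F,\nu_s)\,dA,
\]
where $\nu_s$ is the outward unit normal of $\O_s$ along $C_s$.

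Next I would let $s\to0$. The left-hand side converges to $\int_\O\div(F)\,dx$ by dominated convergence, since $\div(F)\in L^1(\O)$ and $\O_s\nearrow\O$. Because $F$ is continuous up to the relatively open smooth faces $\S$ and $T$, the integrands $\bar g(F,\nu)$ and $\bar g(F,\bar N)$ are continuous there, and as $\S_s\nearrow\S$ and $T_s\nearrow T$ the two face integrals converge to $\int_\S\bar g(F,\nu)\,dA$ and $\int_T\bar g(F,\bar N)\,dA$ (their finiteness then following from the identity once the collar term is controlled). Thus the whole matter reduces to showing that the collar contribution $\int_{C_s}\bar g(F,\nu_s)\,dA$ can be made to vanish along a suitable sequence $s\to0$.

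This last point is the crux, and it is exactly where the hypothesis $F\in L^2(\O)$ together with the codimension-two nature of $\G$ enters. The device is to average over $s$ rather than fix it: by the coarea formula,
\[
\int_\epsilon^{2\epsilon}\Big(\int_{C_s}|F|\,dA\Big)\,ds=\int_{\{\epsilon<d<2\epsilon\}}|F|\,|\bar\nabla d|\,dx\le\int_{\{\epsilon<d<2\epsilon\}}|F|\,dx.
\]
Since $\G$ has codimension two, the shell $A_\epsilon:=\{\epsilon<d<2\epsilon\}$ has volume $|A_\epsilon|=O(\epsilon^2)$, so Cauchy--Schwarz bounds the right-hand side by $\|F\|_{L^2(A_\epsilon)}\,|A_\epsilon|^{1/2}=O(\epsilon)\,\|F\|_{L^2(A_\epsilon)}$, and $\|F\|_{L^2(A_\epsilon)}\to0$ as $\epsilon\to0$ by absolute continuity of the integral of $|F|^2$. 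Hence the left-hand side is $o(\epsilon)$, and the mean value theorem for integrals yields some $s_\epsilon\in(\epsilon,2\epsilon)$ with $\int_{C_{s_\epsilon}}|F|\,dA=o(1)\to0$; a fortiori $\int_{C_{s_\epsilon}}\bar g(F,\nu_{s_\epsilon})\,dA\to0$. Passing to the limit along $s_\epsilon$ in the divergence identity then gives the asserted formula.

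The main obstacle is precisely this collar estimate: a naive bound on a single cut surface $C_s$ fails, because $F$ is controlled only in $L^2(\O)$ in the interior and may well be unbounded on surfaces approaching $\G$. The averaging-over-$s$ argument — coarea combined with Cauchy--Schwarz and the $\epsilon^2$ volume scaling of the codimension-two collar — is what converts the interior $L^2$ bound into a vanishing boundary contribution, and it is the one genuinely delicate ingredient; the remaining limits are routine consequences of $\div(F)\in L^1(\O)$ and the $C^1$ regularity of $F$ up to $\S\cup T$.
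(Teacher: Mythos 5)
Your proof is correct and is essentially the same argument the paper relies on: the paper does not prove Proposition \ref{int-by-parts} itself but cites \cite{PA}, Lemma 2.1, whose proof proceeds exactly as yours does — truncate $\O$ away from the codimension-two edge $\G$, apply the classical divergence theorem on the truncated domains, and kill the extra collar term along a good sequence of radii by combining the coarea formula, Cauchy--Schwarz, and the fact that $F\in L^2(\O)$ together with the $O(\epsilon^2)$ volume (equivalently $O(\epsilon)$ area) scaling of tubular neighborhoods of $\G$. The only cosmetic difference is the order of operations (you apply Cauchy--Schwarz on the bulk shell after coarea rather than on the cut hypersurface itself), which changes nothing essential.
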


We first prove a Pohozaev-type identity for \eqref{overd}.

\begin{prop}\label{Poho}
Let  $u$ be the unique solution to \eqref{overd}. Then we have
\begin{equation}\label{Peq}
\int_{\Omega}V(P-c^{2})dx=0.
\end{equation}
\end{prop}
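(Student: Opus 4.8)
The plan is to integrate the divergence of a cleverly chosen vector field built from the conformal Killing field $X$ and the solution $u$, and then to use the overdetermined boundary data together with Propositions \ref{xaa} and \ref{xaa2} to kill all boundary terms. The natural candidate to integrate is the divergence of
\begin{equation*}
F := V\bar\nabla u - \left(\frac{u}{n}\right)X + \left(\text{lower-order correction}\right),
\end{equation*}
where the correction is tuned so that $\div F$ reproduces $V(P-c^2)$ up to an exact term, or alternatively so that $\div F$ equals $V(P-c^2)$ outright after using the PDE $\bar\Delta u + nKu = 1$. The guiding principle, emphasized in the paper's introduction, is that $\div X = nV$ (since $L_X\bar g = V\bar g$, tracing gives $\bar\nabla_i X^i = nV$), and that $\bar\nabla^2 V = -KV\bar g$ by \eqref{va2}. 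These two facts are exactly what make the second-order terms collapse.

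First I would compute $\div(V\bar\nabla u)$ and $\div(uX)$ separately. For the first, $\div(V\bar\nabla u) = \bar g(\bar\nabla V,\bar\nabla u) + V\bar\Delta u = \bar g(\bar\nabla V,\bar\nabla u) + V(1-nKu)$. For the second, $\div(uX) = \bar g(\bar\nabla u, X) + u\,\div X = \bar g(\bar\nabla u,X) + nVu$. The key algebraic step is to recognize that $P = \bar g(\bar\nabla u,\bar\nabla u) - \tfrac{2}{n}u + Ku^2$, so that $VP$ must be produced from products of $V$ with $|\bar\nabla u|^2$ and with $u$. I expect the correct combination to involve $\bar g(\bar\nabla u, \bar\nabla_{\bar\nabla u} X)$, which by the conformal Killing equation \eqref{XXaeq1} equals $V\,\bar g(\bar\nabla u,\bar\nabla u)$; this is precisely how the gradient-squared term in $P$ enters, and it is the heart of why $X$ must be conformal Killing rather than merely Killing. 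Assembling these pieces and repeatedly substituting $\bar\Delta u = 1 - nKu$ and $\bar\nabla^2 V = -KV\bar g$ should produce, after cancellation, the clean identity $\div(\text{something}) = V(P - c^2)$ plus possibly an exact divergence.

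Then I would invoke the integration-by-parts formula Proposition \ref{int-by-parts}, which applies thanks to the regularity assumption \eqref{reg-ass}, to convert $\int_\Omega V(P-c^2)\,dx$ into boundary integrals over $\bar\Sigma$ and $T$. On $\bar\Sigma$ the Dirichlet condition $u=0$ and the Neumann condition $\partial_\nu u = c$ force $P = c^2$ there (since $\bar\nabla u$ is purely normal with $|\bar\nabla u| = c$ and $u=0$), and the factor $\bar g(X,\nu)$ or the structure of $F$ should make this contribution vanish or telescope. On $T$ the tangency property \eqref{XXaeq2}, $\bar g(X,\bar N) = 0$, eliminates the $X$-part of the boundary flux, while the Robin condition $\partial_{\bar N}u = \kappa u$ together with $\partial_{\bar N}V = \kappa V$ from \eqref{va3} should cancel the remaining $V$-part. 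The main obstacle I anticipate is not any single estimate but the bookkeeping of the boundary terms on $T$: one must verify that the combination of $\partial_{\bar N}u=\kappa u$, $\partial_{\bar N}V = \kappa V$, and $\bar g(X,\bar N)=0$ conspires to annihilate the entire flux through $T$, and this requires writing $F\cdot\bar N$ explicitly and checking that the $\kappa u\cdot V$ and $V\cdot\kappa u$ contributions match the Robin-weighted terms with the right signs. This is exactly the place where the "unfavorable sign" of the Robin condition, flagged earlier in the paper, could cause trouble, so careful sign-tracking there will be decisive.
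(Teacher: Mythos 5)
Your overall strategy (a Rellich--Pohozaev multiplier built from $X$, integration by parts via Proposition \ref{int-by-parts}, then the boundary conditions) is the same as the paper's, and you correctly identify the mechanism by which the quadratic gradient term enters: the conformal Killing equation \eqref{XXaeq1} gives $\bar{\nabla}X(\bar{\nabla}u,\bar{\nabla}u)=V\bar{g}(\bar{\nabla}u,\bar{\nabla}u)$, which forces the multiplier to contain $\bar{g}(X,\bar{\nabla}u)\bar{\nabla}u$. But there is a genuine gap exactly at the point you flag as ``bookkeeping'': the flux through $T$ does \emph{not} vanish by pointwise cancellation of $\p_{\bar N}u=\kappa u$, $\p_{\bar N}V=\kappa V$ and $\bar{g}(X,\bar N)=0$. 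Any multiplier that produces $V\bar{g}(\bar{\nabla}u,\bar{\nabla}u)$ in its divergence leaves on $T$ a flux term of the form $\bar{g}(X,\bar{\nabla}u)\,\p_{\bar N}u=\kappa u\,\bar{g}(X,\bar{\nabla}u)$, and $\bar{g}(X,\bar{\nabla}u)$ involves the \emph{tangential} derivatives of $u$ along $T$, which none of the three boundary relations control pointwise; no sign-tracking can kill it. The paper disposes of this term globally, in two steps that are absent from your plan: first, since $X|_{S_{K,\kappa}}$ is tangential to $S_{K,\kappa}$ with $\div_T X^T=(n-1)V$, and since $u=0$ on the interface $\Gamma=\p T$, a tangential integration by parts \emph{along} $T$ gives $\kappa\int_T u\,\bar{g}(X,\bar{\nabla}u)\,dA=\frac{\kappa(1-n)}{2}\int_T Vu^2\,dA$ (this is \eqref{Peq6}); second, the resulting quadratic boundary integral $\int_T Vu^2\,dA$ is genuinely nonzero, and is converted into interior integrals by a separate bulk integration by parts, identity \eqref{Peq10}, which uses \eqref{va2} and \eqref{va3}. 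Only after substituting both conversions into the Pohozaev balance \eqref{Peq3} do the $T$-contributions cancel and \eqref{Peq} appear. What is decisive on $T$ is $u|_\Gamma=0$ together with the tangential divergence of $X$ --- neither of which figures in your proposal.

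A secondary, structural point: your proposed leading field $F=V\bar{\nabla}u-\frac{u}{n}X$ cannot carry the identity, since $\div(V\bar{\nabla}u)=\bar{g}(\bar{\nabla}V,\bar{\nabla}u)+V(1-nKu)$ and $\div(uX)=\bar{g}(X,\bar{\nabla}u)+nVu$ are both linear in $u$, whereas $V(P-c^2)$ is quadratic; so the entire content of the proof is deferred to your unspecified ``correction''. The paper's actual multiplier is $uX-\bar{g}(X,\bar{\nabla}u)\bar{\nabla}u$, whose divergence is computed in \eqref{Peq1} using \eqref{XXaeq1} and $\bar{\Delta}u+nKu=1$, and even then the argument is not a single divergence identity with vanishing boundary terms but a chain --- \eqref{Peq2}, \eqref{Peq3}, the flux identity \eqref{Peq5}, and the two conversions \eqref{Peq6} and \eqref{Peq10}. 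As written, your proposal leaves all of these computations unverified, and the one concrete prediction it does make about them (pointwise annihilation of the flux through $T$) is false.
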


\begin{proof}
{ First of all, we remark that by regularity in Proposition \ref{existence}, $u\in C^\infty(\O\cup \S\cup T)$, that is, $u$ is smooth away from the corner $\G$. }Moreover, due to our assumption $u\in W^{1,\infty}(\O)\cap W^{2,2}(\O)$, Proposition \ref{int-by-parts} can be applied in all the following integration by parts.

Now we consider the following differential identity
\begin{eqnarray}\label{Peq1}
&&\div(uX-\bar{g}(X,\bar{\nabla} u)\bar{\nabla} u)\\
&&=\bar{g}(X, \bar{\nabla} u)+u\div X-\bar{\nabla} X(\bar{\nabla} u,\bar{\nabla} u)
-\frac{1}{2}\bar{g}(X,\bar{\nabla}\bar{g}(\bar{\nabla} u, \bar{\nabla} u))-\bar{g}( X, \bar{\nabla} u)\bar{\Delta }u\nonumber\\
&&=nVu-V\bar{g}(\bar{\nabla} u, \bar{\nabla} u)-\frac{1}{2}\bar{g}( X,\bar{\nabla}\bar{g}(\bar{\nabla} u, \bar{\nabla} u))+nK\bar{g}( X,\bar{\nabla}(\frac{1}{2}u^{2})).\nonumber
\end{eqnarray}
where we use equation $\bar{\De} u+nKu=1$ and \eqref{XXaeq1}.

Integrating by parts and using \eqref{XXaeq2} and boundary conditions \eqref{overd}, we see that
\begin{eqnarray}\label{Peq2}
&&-c^{2}\int_{\Sigma}\bar{g}( X,\nu) dA-\int_{T}\bar{g}( X,\bar{\nabla} u) u_{\bar{N}}dA\\
&=&\int_{\Omega}\left(nVu-V\bar{g}(\bar{\nabla} u, \bar{\nabla} u)+\frac{1}{2}\bar{g}(\bar{\nabla} u, \bar{\nabla} u)\div X-\frac{nK}{2}u^{2}\div X\right)dx-\frac{ c^{2}}{2}\int_{\Sigma}\bar{g}( X,\nu) dA\nonumber.
\end{eqnarray}

 It follows that
 \begin{eqnarray}\label{Peq3}
&&\int_{\Omega}\left(nVu+(\frac{n}{2}-1)V\bar{g}(\bar{\nabla} u, \bar{\nabla} u)-\frac{n^{2}K}{2}Vu^{2}\right)dx\\
&&=-\frac{c^{2}}{2}\int_{\Sigma}\bar{g}( X,\nu) dA-\kappa\int_{T}\bar{g}( X,\bar{\nabla} u) udA.\nonumber
\end{eqnarray}

Using \eqref{XXaeq1} and \eqref{XXaeq2} yields
\begin{eqnarray}
&&\int_\S \frac12c^2\bar{g}( X, \nu) dA= \frac12c^2\left(\int_\O {\rm div}Xdx -\int_T\bar{g}(X, \bar N)dA\right)= \frac n 2 c^2\int_\O Vdx,\label{Peq5}\\
&&\kappa\int_T \ \bar{g}( X, \bar{\n}u ) udA=\kappa\int_T \bar{g}( X^{T},\bar{\nabla}(\frac{1}{2}u^{2})) dA\label{Peq6}\\
&&\qquad\qquad\qquad\qquad\quad=\frac{\kappa}{2}\int_{\Gamma}u^{2}\bar{g}( X^{T},\mu) ds-\frac{\kappa}{2}\int_{T}u^{2}\div_{T}X^{T}dA\nonumber\\
&&\qquad \qquad\qquad\qquad\quad=\frac{\kappa(1-n)}{2}\int_{T}Vu^{2}dA.\nonumber
\end{eqnarray}
In the last equality we also used $u=0$ on $\G$ and $\div_{T}X^{T}=(n-1)V$.

 To achieve \eqref{Peq}, we do a further integration by parts and apply \eqref{va2} and \eqref{va3} get
 \begin{eqnarray*}
 \int_\O  V \bar{g}(\bar{\nabla} u, \bar{\nabla} u)dx&=&\int_{T} V u(u_{\bar{N}})dA-\int_\O\left(\bar{g}(\bar{\nabla} V,\bar{\nabla}(\frac{1}{2}u^{2})) +Vu\bar{\Delta} u\right)dx\\
 &=&\kappa\int_{T}Vu^{2}dA-\frac{1}{2}\int_{T}(V)_{\bar{N}} u^{2}dA+\int_{\Omega}(\frac{1}{2}u^{2}\bar{\Delta} V-Vu\bar{\Delta} u)dx
 \\&=&\kappa\int_{T}Vu^{2}dA-\frac{\kappa}{2}\int_{T}Vu^{2}dA+\int_{\Omega}\frac{1}{2}u^{2}(-nKV)-Vu(1-nKu)dx
 \\&=&\frac{\kappa}{2}\int_{T}Vu^{2}dA+\frac{nK}{2}\int_{\Omega}Vu^{2}dx-\int_{\Omega}Vudx.
\end{eqnarray*}
It follows that
 \begin{eqnarray}\label{Peq10}
\frac{\kappa}{2}\int_{T}Vu^{2}dA=\int_\O \left( V\bar{g}(\bar{\nabla} u, \bar{\nabla} u)-\frac{nK}{2}Vu^{2}+Vu\right)dx.
\end{eqnarray}
Substituting \eqref{Peq5}-\eqref{Peq10} into \eqref{Peq3}, we arrive at \eqref{Peq}.
\end{proof}

\begin{prop}\label{int-id}
Let $u$ be the unique solution to \eqref{overd} such that $u\in W^{1,\infty}(\O)\cap W^{2,2}(\O)$. Then
{\begin{eqnarray}\label{Xeq0}
\int_\O V u \big|(\bar{\n}^2 u+Kug)-\frac1n (\bar{\Delta} u+nKu) \bar g\big|^2\, dx=0.
\end{eqnarray}
}
\end{prop}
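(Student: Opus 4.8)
The plan is to evaluate $\int_\O Vu\bar{\Delta}P\,dx$ by transferring the operator $\bar{\Delta}$ from $P$ onto the weight $Vu$ through two integrations by parts. Under the standing regularity hypothesis \eqref{reg-ass} this is legitimate by Proposition \ref{int-by-parts}, applied to the vector field $Vu\,\bar{\nabla}P-P\,\bar{\nabla}(Vu)$, which produces the Green-type identity
\begin{equation*}
\int_\O Vu\bar{\Delta}P\,dx=\int_\O P\,\bar{\Delta}(Vu)\,dx+\int_{\S}\left(Vu\,\p_\nu P-P\,\p_\nu(Vu)\right)dA+\int_T\left(Vu\,\p_{\bar N}P-P\,\p_{\bar N}(Vu)\right)dA.
\end{equation*}
The strategy is then to compute the bulk integrand explicitly, simplify both boundary integrals using the overdetermined data, and finally eliminate the surviving volume term by the weighted Pohozaev identity \eqref{Peq}.

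For the bulk term I would expand $\bar{\Delta}(Vu)=V\bar{\Delta}u+2\bar{g}(\bar{\nabla}V,\bar{\nabla}u)+u\bar{\Delta}V$ and insert the field equation $\bar{\Delta}u=1-nKu$ from \eqref{overd} together with $\bar{\Delta}V=-nKV$, which is the trace of \eqref{va2}. This gives $\bar{\Delta}(Vu)=V-2nKVu+2\bar{g}(\bar{\nabla}V,\bar{\nabla}u)$, so $\int_\O P\,\bar{\Delta}(Vu)\,dx$ breaks into the ``good'' piece $\int_\O VP\,dx$, a multiple of $\int_\O VuP\,dx$, and the cross term $2\int_\O P\,\bar{g}(\bar{\nabla}V,\bar{\nabla}u)\,dx$; the latter I would reduce by a further integration by parts exploiting \eqref{va2}.

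The boundary analysis is where the overdetermined conditions enter. On $\S$ we have $u=0$ and $\p_\nu u=c$, hence $\bar{\nabla}u=c\nu$ and $P=c^{2}$ there, which annihilates $Vu\,\p_\nu P$ and turns $P\,\p_\nu(Vu)$ into an explicit expression in $V$ and $c$. On $T$ the conditions $\p_{\bar N}u=\kappa u$ and $\p_{\bar N}V=\kappa V$ (the latter being \eqref{va3}) give $\p_{\bar N}(Vu)=2\kappa Vu$, while $\p_{\bar N}P=2\bar{\nabla}^{2}u(\bar{\nabla}u,\bar N)-\tfrac{2}{n}\p_{\bar N}u+2Ku\,\p_{\bar N}u$. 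The crucial input here is \eqref{bdry-prop}, $\bar{\nabla}^{2}u(\bar N,e^{T})=0$, which forces $\bar{\nabla}^{2}u(\bar{\nabla}u,\bar N)=\kappa u\,\bar{\nabla}^{2}u(\bar N,\bar N)$ by discarding the tangential part of $\bar{\nabla}u$, leaving a boundary integrand over $T$ that can be further simplified by integrating by parts along $T$ and using the umbilicity of $S_{K,\kappa}$. Assembling everything and substituting $\int_\O VP\,dx=c^{2}\int_\O V\,dx$ from \eqref{Peq} should produce the desired cancellation to $0$.

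The hard part will be the precise bookkeeping of the $T$-boundary integrals, in particular reconciling the Hessian contribution $\bar{\nabla}^{2}u(\bar N,\bar N)$ and the cross term $2\int_\O P\,\bar{g}(\bar{\nabla}V,\bar{\nabla}u)\,dx$ through the auxiliary identities \eqref{va2} and \eqref{va3}, so that they combine exactly with the Pohozaev identity. A secondary but essential point is to justify every integration by parts despite the loss of higher regularity of $u$ along the edge $\G=\bar\S\cap\bar T$; this is exactly the role of Proposition \ref{int-by-parts} and the assumption \eqref{reg-ass}, and care is needed to ensure the integrands lie in the required $C^{1}(\O\cup\S\cup T)\cap L^{2}(\O)$ class with $L^{1}$ divergence.
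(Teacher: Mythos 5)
Your strategy is essentially the paper's: transfer $\bar{\Delta}$ from $P$ onto $Vu$ via the field $Vu\bar{\nabla}P-P\bar{\nabla}(Vu)$ and Proposition \ref{int-by-parts}, expand the bulk term with $\bar{\Delta}u=1-nKu$ and $\bar{\Delta}V=-nKV$, reduce the cross term $2\int_\Omega P\,\bar{g}(\bar{\nabla}V,\bar{\nabla}u)\,dx$ by further integration by parts with \eqref{va2}, handle the $T$-integrals with \eqref{va3}, \eqref{bdry-prop} and $\partial_{\bar N}u=\kappa u$, and finish with the Pohozaev identity \eqref{Peq}. The $T$-side bookkeeping you are worried about does close: after \eqref{bdry-prop} the Hessian contributions and the remaining terms cancel algebraically against the $T$-integrals generated by the cross term (this is exactly \eqref{Xeq1.6}--\eqref{Xeq1.9} in the paper; no tangential integration by parts along $T$ is actually needed in this proposition, in contrast to the proof of \eqref{Peq}).

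There is, however, one concrete loose end on the $\Sigma$ side. In your organization the $\Sigma$-boundary term equals $-\int_\Sigma P\,\partial_\nu(Vu)\,dA=-c^{3}\int_\Sigma V\,dA$, while the bulk produces $+\int_\Omega VP\,dx$, which \eqref{Peq} converts into $c^{2}\int_\Omega V\,dx$. These two do \emph{not} cancel for free: you need the additional identity $\int_\Omega V\,dx=c\int_\Sigma V\,dA$, which you never state or prove. It is true, by one more application of Proposition \ref{int-by-parts} to the field $V\bar{\nabla}u-u\bar{\nabla}V$:
\begin{equation*}
\int_\Omega V\,dx=\int_\Omega\left(V\bar{\Delta}u-u\bar{\Delta}V\right)dx
=\int_\Sigma\left(V\partial_\nu u-u\partial_\nu V\right)dA+\int_T\left(V\partial_{\bar N}u-u\partial_{\bar N}V\right)dA
=c\int_\Sigma V\,dA,
\end{equation*}
where the $T$-integrand vanishes because $u$ and $V$ satisfy the same Robin condition ($\partial_{\bar N}u=\kappa u$, $\partial_{\bar N}V=\kappa V$), and on $\Sigma$ we used $u=0$, $\partial_\nu u=c$. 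The paper sidesteps this extra lemma by building the field $c^{2}\left(V\bar{\nabla}u-u\bar{\nabla}V\right)$ into the initial differential identity \eqref{Xeq1}: the combined $\Sigma$-integrand is then $(-P+c^{2})Vu_\nu$, which vanishes pointwise since $P=c^{2}$ on $\Sigma$, and the combined bulk term is $-(P-c^{2})V$, which is killed directly by \eqref{Peq}. With the displayed identity supplied, your argument is complete and is the same proof, packaged slightly differently.
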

\begin{proof}
{ Since $u\in W^{1,\infty}(\O)\cap W^{2,2}(\O)$, then \begin{eqnarray}\label{eq-new1}
\Delta P= 2\big|(\bar{\n}^2 u+Kug)-\frac1n (\bar{\Delta} u+nKu) \bar g\big|^2 \in L^1(\O).
\end{eqnarray}
 It follows that $$\div(V u \bar{\n} P- P\bar{\n} (V u))\in L^1(\O) \quad \hbox{ and }\quad (V u \bar{\n} P- P\bar{\n} (V u))\in L^2(\O).$$
 }
Firstly, we consider the following differential identity
\begin{eqnarray}
&&\div(Vu\bar{\nabla }P-P\bar{\nabla}(Vu))+c^{2}\div(V\bar{\nabla} u-u\bar{\nabla} V)\nonumber\\
\qquad&=&Vu\bar{\Delta }P-P\bar{\Delta}(Vu)+c^{2}(V\bar{\Delta} u-u\bar{\Delta} V)\nonumber\\
\qquad&=&Vu\bar{\Delta} P-2P\bar{g}(\bar{\nabla} V,\bar{\nabla} u)-(P+c^{2})u\bar{\Delta} V+(c^{2}-P)V\bar{\Delta} u\nonumber\\
\qquad&=&Vu\bar{\Delta} P-2P\bar{g}(\bar{\nabla} V,\bar{\nabla} u)-(P+c^{2})u(-nKV)+(c^{2}-P)V(1-nKu)\nonumber\\
\qquad&=&Vu\bar{\Delta} P-2P\bar{g}(\bar{\nabla} V,\bar{\nabla} u)+2nKPuV-(P-c^{2})V
.\label{Xeq1}
\end{eqnarray}
where we use the fact that $\bar{\nabla}^{2}V=-KV\bar{g}$ and the equation $\bar{\Delta} u+nKu=1$ in $\Omega$.

Applying divergence theorem in \eqref{Xeq1} and  boundary conditions \eqref{overd}, we have
\begin{eqnarray}\label{Xeq291}
&&\int_{\Sigma}(-P+c^{2})Vu_{\nu}dA+\int_{T}\left(VuP_{\bar{N}}-P(Vu)_{\bar{N}}\right)dA\\
&=&\int_{\Omega}\left(Vu\bar{\Delta} P-2P\bar{g}(\bar{\nabla} V,\bar{\nabla} u)+2nKPuV-(P-c^{2})V\right)dx\nonumber\\
&=&\int_{\Omega}\left(Vu\bar{\Delta} P-2P\bar{g}(\bar{\nabla} V,\bar{\nabla} u)+2nKPuV\right)dx.\nonumber
\end{eqnarray}
where the last equation we use Pohozaev-type identity \eqref{Peq}.

Noting that $P=c^2$ on $\S$. It follows from \eqref{Xeq291}
\begin{eqnarray}\label{Xeq3}
\quad\int_{\Omega}Vu\bar{\Delta }Pdx=\int_{\Omega}\left( 2P\bar{g}(\bar{\nabla} V, \bar{\nabla} u)-2nKPuV\right) dx+\int_{T}(VuP_{\bar{N}}-P(Vu)_{\bar{N}})dA.
\end{eqnarray}

Now we compute the first term of \eqref{Xeq3}, we have
\begin{alignat}{2}\label{Xeq1.6}
\int_\O2\bar{g}(\bar{\nabla} V,\bar{\nabla} u) Pdx&=2\int_\O \bar{g}(\bar{\nabla} V,\bar{\nabla} u)\left(\bar{g}(\bar{\nabla} u, \bar{\nabla} u)-\frac{2}{n}u+Ku^{2}\right)dx\nonumber\\
&=2\int_\O \bar{g}(\bar{\nabla} V,\bar{\nabla} u)\bar{g}(\bar{\nabla} u, \bar{\nabla} u)dx-\frac{2}{n}\int_{\Omega}\bar{g}(\bar{\nabla} V,\bar{\nabla} u^{2}) dx+2K\int_{\Omega}\bar{g}(\bar{\nabla} V,\bar{\nabla} u) u^{2}dx\nonumber\\
&=2\int_{T}(\partial_{\bar{N}}V)u\bar{g}(\bar{\nabla} u, \bar{\nabla} u)dA-2\int_\O\left(\bar{\Delta} V\bar{g}(\bar{\nabla} u, \bar{\nabla} u)+2\bar{\nabla}^{2}u(\bar{\nabla} V,\bar{\nabla} u)\right)udx\nonumber\\
&\quad\quad -\frac{2}{n}\int_{T}(\partial_{\bar{N}}V)u^{2}dA+\frac{2}{n}\int_{\Omega}\bar{\Delta} Vu^{2}dx+2K\int_{\Omega}\bar{g}(\bar{\nabla} V,\bar{\nabla} u) u^{2}dx\nonumber\\
&=2\kappa\int_{T}Vu\bar{g}(\bar{\nabla} u, \bar{\nabla} u)dA+2nK\int_\O V\bar{g}(\bar{\nabla} u, \bar{\nabla} u)udx-2\int_\O\bar{\nabla}^{2}u(\bar{\nabla} V,\bar{\nabla}(u^{2}))dx\nonumber\\
&\quad\quad -\frac{2}{n}\kappa\int_{T}Vu^{2}dA+\frac{2}{n}\int_{\Omega}(-nKV)u^{2}dx+2K\int_{\Omega}\bar{g}(\bar{\nabla} V,\bar{\nabla} u) u^{2}dx\nonumber\\
&=2\kappa\int_{T}(\bar{g}(\bar{\nabla} u, \bar{\nabla} u)-\frac{u}{n})uVdA+2K\int_{\Omega}(n\bar{g}(\bar{\nabla} u, \bar{\nabla} u)-u)uVdx\nonumber\\
&\quad\quad -2\int_{\Omega}\left(\bar{\nabla}^{2}u(\bar{\nabla} V,\bar{\nabla}(u^{2}))-K\bar{g}(\bar{\nabla} V,\bar{\nabla} u) u^{2}\right)dx,
\end{alignat}
where we have used the fact $\bar{\nabla}^{2}V=-KV\bar{g}$ and $\partial_{\bar{N}}V=\kappa V$ on $T$.

Now, we compute the last term of (\ref{Xeq1.6}) by using $\bar{\nabla}^{2}V=-KV\bar{g}$, Ricci identity and \eqref{bdry-prop}
\begin{alignat}{2}\label{Xeq1.7}
&-2\int_{\Omega}\bar{\nabla}^{2}u(\bar{\nabla} V,\bar{\nabla}(u^{2}))-K\bar{g}(\bar{\nabla} V,\bar{\nabla} u) u^{2}dx\\
&=-2\int_{T}u^{2}\bar{\nabla}^{2}u(\bar{\nabla} V,\bar{N})dA
 +2\int_{\Omega}\left(\bar{g}(\bar{\nabla}^{2}V,\bar{\nabla}^{2}u) +\bar{g}(\bar{\nabla} V,\bar{\nabla}\bar{\Delta} u)
+\overline{Ric}(\bar{\nabla} V,\bar{\nabla} u)\right)u^{2}dx\nonumber\\
&\qquad +2K\int_{\Omega}\bar{g}(\bar{\nabla} V,\bar{\nabla} u) u^{2}dx\nonumber\\
&=-2\int_{T}u^{2}(V_{\bar{N}})\bar{\nabla}^{2}u(\bar{N},\bar{N})dA+2\int_{\Omega}\left(-KV\bar{\Delta} u+\bar{g}(\bar{\nabla} V,\bar{\nabla}(1-nKu))+(n-1)K\bar{g}(\bar{\nabla} V,\bar{\nabla} u)\right)u^{2}dx\nonumber\\
&\qquad +2K\int_{\Omega}\bar{g}(\bar{\nabla} V,\bar{\nabla} u) u^{2}dx\nonumber\\
&=-2\kappa\int_{T}u^{2}V\bar{\nabla}^{2}u(\bar{N},\bar{N})dA-2K\int_{\Omega}V(\bar{\Delta} u)u^{2}dx\nonumber\\
&=-2\kappa\int_{T}u^{2}V\bar{\nabla}^{2}u(\bar{N},\bar{N})dA-2K\int_{\Omega}V(1-nKu)u^{2}dx.\nonumber
\end{alignat}

Next, we compute the boundary term of (\ref{Xeq3})
\begin{alignat}{2}\label{Xeq1.9}
\int_{T}VuP_{\bar{N}}-(Vu)_{\bar{N}}PdA&=\int_{T}(P_{\bar{N}}-2\kappa P)uVdA=\int_{T}\left(2\bar{\nabla}^{2}u(\bar{\nabla} u,\bar{N})+2\kappa(\frac{u}{n}-\bar{g}(\bar{\nabla} u, \bar{\nabla} u))\right)uVdA\nonumber\\
&=\int_{T}\left(2\kappa u\bar{\nabla}^{2}u(\bar{N},\bar{N})+2\kappa(\frac{u}{n}-\bar{g}(\bar{\nabla} u, \bar{\nabla} u))\right)uVdA.
\end{alignat}
In the last equality we used (\ref{bdry-prop}) and also $\partial_{\bar{N}}u=\kappa u$ on $T$.

Substituting (\ref{Xeq1.6})-(\ref{Xeq1.9}) into (\ref{Xeq3}) and noticing \eqref{eq-new1}, we get the conclusion (\ref{Xeq0}).

\end{proof}

\noindent{\bf Proof of Theorem \ref{mainthm}.}We note that  in both cases, $V>0$.  In the case that $S_{K, \kappa}$ is a horosphere or an equidistant hypersurface,   $V=\frac{1}{x_{n}}>0$. In the case that $S_{K, \kappa}$ is a geodesic sphere in $\mathbb{H}^{n}$ or $\mathbb{S}^{n}_{+}$,  $V>0$ in $\Omega$ since $\Omega\subseteq B^{{\rm int},+}_{K, \kappa}$, see \eqref{half-ball} and \eqref{half-ball-sphere}.

{From Propositions \ref{mp} and \ref{sub-harm} as well as $V>0$ in $\Omega$, we have
\begin{eqnarray}\label{equiv}
V u\big|(\bar{\n}^2 u+Kug)-\frac1n (\bar{\Delta} u+nKu) \bar g\big|^2\le 0 \hbox{ in }\O.
\end{eqnarray}
It follows from Proposition \ref{int-id}
that $$V u \big|(\bar{\n}^2 u+Kug)-\frac1n (\bar{\Delta} u+nKu) \bar g\big|^2\equiv 0 \hbox{ in }\O.$$}
Since $u<0$ in $\O$ by Proposition \ref{mp}, we see immediately that
 $\bar{\n}^{2}u$ is proportional to the metric $g$
in $\Omega$. Since $\bar{\Delta} u+nKu=1$, we get $$\bar{\nabla}^{2}u=(\frac{1}{n}-Ku)\bar{g}.$$
From this, by restricting on $\Sigma$ and using $u=0$ on $\Sigma$, we get $h_{ij}=\frac{1}{nc}g_{ij}$
which means $\S$ must be part of an umbilical hypersurface with principal curvature $\frac{1}{nc}$.
\qed

\

\section{Heintze-Karcher-Ros inequality and Alexandrov Theorem}
In this section, We shall first use the solution to \eqref{bvp} prove the Heintze-Karcher-Ros type inequality for free boundary hypersurfaces in $\hh^n$ supported on  a horosphere or an equidistant hypersurface.

\noindent{\bf Proof of Theorem \ref{HK}.}\

The proof follows closely \cite[Theorem 5.2]{WX}.
Denote $\Omega$ be a bounded connected domain enclosed by $\Sigma$ and $S_{K,\kappa}$ whose boundary $\partial\Omega=\Sigma\cup T$.
Let $u$ be a solution of the following mixed BVP,
\begin{equation}\label{bvp11}
\begin{cases}{}
\bar{\Delta} u-nu=1 , & \text{in} \ \Omega,\\
u=0 ,     & \text{on} \ \bar \Sigma,\\
 \p_{\bar N}u=\kappa u , & \text{on}\ T.
\end{cases}
\end{equation}
where $\bar N$ is the unit outward normal of $B_{K,\kappa}^{\rm int}$. The existence and regularity of $u$ has been proved in Proposition \ref{existence}.

Using \eqref{va2}, we have
 \begin{eqnarray}\label{fact11}
&&\bar{\Delta} (\frac{1}{x_{n}})-\frac{n}{x_{n}}=0 ,\quad \bar \Delta (\frac{1}{x_{n}})\bar g-\bar \n^2 (\frac{1}{x_{n}})+\frac{1}{x_{n}}\overline{{\rm Ric}}=0.
\end{eqnarray}
Combining \eqref{bvp11} and \eqref{fact11}, we apply Green's formula
\begin{eqnarray}\label{green}
\int_\O \frac{1}{x_{n}} dx&=& \int_\O \frac{1}{x_{n}}(\bar{\Delta} u-nu)- (\ode(\frac{1}{x_{n}})-\frac{n}{x_{n}}) u dx\nonumber
\\&=&\int_{\p\O} \frac{1}{x_{n}} u_\nu- \partial_\nu(\frac{1}{x_{n}}) u dA\nonumber
\\&=&\int_{\S}\frac{1}{x_{n}} u_{\nu} dA+\int_{T} \frac{1}{x_{n}} u_{\bar N}-u\partial_{\bar N}(\frac{1}{x_{n}}) dA\nonumber
\\&=&\int_{\S} \frac{1}{x_{n}} u_{\nu} dA.
\end{eqnarray}
where we use the fact \eqref{va3}.

Using H\"older's inequality for the RHS of \eqref{green}, we have
\begin{eqnarray}\label{first1}
\left(\int_\O \frac{1}{x_{n}}dx\right)^2 \leq \int_{\S} \frac{1}{x_{n}}H_{1}u_{\nu}^2 dA\int_\S \frac{1}{x_{n}\cdot H_{1}} dA.
\end{eqnarray}

Applying the weighted Reilly type formula in \cite{LX, QX}, (see also \cite[Theorem 5.1]{WX}) in our case with $V=\frac{1}{x_{n}}$, we see
\begin{eqnarray}\label{qx22}
\frac{n-1}{n}\int_\O \frac{1}{x_{n}} dx&=&  \int_\O \frac{1}{x_{n}} (\ode u-nu)^2 dx-\frac{1}{n} \int_\O \frac{1}{x_{n}} (\ode u-nu)^2dx\nonumber\\
&\geq &\int_\O \frac{1}{x_{n}} \left((\ode u-nu)^2-|\bar \n^2 u-u\bar g|^2\right)dx\nonumber
 \\&=&\int_{\S} \frac{n-1}{x_{n}} H_{1}u_{\nu}^2 dA +\int_T\left(h^{S_{K,\kappa}}-\kappa\cdot \bar{g}\right)\left(\n u-x_{n}{\n (\frac{1}{x_{n}})}u,\n u-x_{n}{\n (\frac{1}{x_{n}})}u\right)dA\nonumber
  \\&=&\int_{\S} \frac{n-1}{x_{n}} H_{1}u_{\nu}^2 dA
\end{eqnarray}
where we use \eqref{va3} and $S_{K,\kappa}$ is an umbilical hypersurface with principal curvature $\kappa$.

Combining \eqref{first1} and \eqref{qx22}, we get \eqref{HKR2}.

If the equality in \eqref{qx22} holds, we get $\bar{\nabla}^{2}u=(\frac{1}{n}+u)\bar{g}$ in $\Omega$. Since $u=0$ on $\Sigma$, we know that $\Sigma$ must be part of an umbilical hypersurface.
\qed

Denote $h$ and $H_{r}$ to be second fundamental form and normalized $r$-th mean curvature of $\Sigma$ respectively. Precisely,
$h(X, Y)=\bar{g}(\bar{\nabla}_{X}\nu, Y)$ and $H_{r}:=\dbinom{n-1}{r}^{-1}S_{r}$, where $S_r$ is given by
\begin{equation}
  S_{r}=\sum_{i_{1}<i_2<\cdots<i_{r}}\kappa_{i_{1}}\kappa_{i_{2}}\cdots\kappa_{i_{r}}\quad\text{for all}\,\, r=1,\cdots n-1.
\end{equation}
where $\kappa_{1}, \kappa_{2}, \cdots, \kappa_{n-1}$ are principal curvature of $\Sigma$ in $\mathbb{H}^{n}$. As convention, we define $H_{0}=1$.

Let $T_{r}(h)=\frac{\p S_{r+1}}{\p h}$ be the Newton transformation. We state the following properties of $T_{r}$.
\begin{lemma}[\cite{Reilly, BarCar}]\label{lemma-0}
 For each $0 \leq r \leq n-2$
\begin{itemize}
  \item [(1)] The Newton tensor $T_{r}$ is divergence-free, i.e., $\div T_{r}=0$;
  \item [(2)]{trace}$\left(T_{r}\right)=(n-1-r)S_{r}$;
  \item [(3)]{trace}$\left( T_{r}h\right)=(r+1)S_{r+1}$;
  \item [(4)]{trace}$\left(T_{r}h^{2} \right)=S_{1}S_{r+1}-(r+2)S_{r+2}$.
\end{itemize}
\end{lemma}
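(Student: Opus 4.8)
The final statement to prove is Lemma 4.1 (the list of four properties of the Newton transformation $T_r = \frac{\partial S_{r+1}}{\partial h}$), so my plan is to establish each of the four identities from the algebraic definition of the elementary symmetric functions $S_r$ together with the structure equations of the immersion $\Sigma \hookrightarrow \hh^n$.

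\textbf{Setup and strategy.} First I would recall the recursive definition of the Newton tensors, $T_0 = \mathrm{Id}$ and $T_r = S_r\,\mathrm{Id} - h\,T_{r-1}$, and diagonalize $h$ at a point with principal curvatures $\kappa_1,\dots,\kappa_{n-1}$ and eigenvalues of $T_r$ given by $\mu_i^{(r)} = \frac{\partial S_{r+1}}{\partial \kappa_i} = \sum_{i_1<\cdots<i_r,\, i_j\neq i}\kappa_{i_1}\cdots\kappa_{i_r}$. Properties (2), (3), (4) are then purely pointwise linear-algebra identities among symmetric polynomials, which I would verify by tracing $T_r$, $T_r h$, and $T_r h^2$ against the eigenbasis. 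For (2) I would sum $\mu_i^{(r)}$ over $i$ and count that each monomial of $S_r$ appears exactly $(n-1-r)$ times (once for each of the remaining indices), giving $\mathrm{trace}(T_r) = (n-1-r)S_r$. For (3) I would compute $\sum_i \kappa_i \mu_i^{(r)}$ and observe each monomial of $S_{r+1}$ is counted $(r+1)$ times (once per factor). Property (4) is the standard Newton identity $\sum_i \kappa_i^2 \mu_i^{(r)} = S_1 S_{r+1} - (r+2)S_{r+2}$, obtained by expanding and regrouping the monomials by how many of the two extra factors coincide.

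\textbf{The divergence-free identity.} The only genuinely geometric statement is (1), $\div T_r = 0$, and this is where the work lies. I would define the divergence componentwise as $(\div T_r)_j = \nabla_i (T_r)_{ij}$ using the Levi-Civita connection of the induced metric on $\Sigma$, and compute inductively using $T_r = S_r\, g - h\, T_{r-1}$. The key input is the Codazzi equation for a hypersurface in a space form of constant curvature $K$: since the ambient curvature tensor is proportional to $g\owedge g$, the tangential components satisfy $\nabla_i h_{jk} = \nabla_j h_{ik}$ (the Codazzi tensor is totally symmetric). Differentiating $S_r$ and using the symmetry $\frac{\partial S_r}{\partial h_{ij}} = (T_{r-1})_{ij}$ together with this full symmetry of $\nabla h$ produces the cancellation $\nabla_i S_r = (T_{r-1})_{ij}\nabla_k h_{ij}$-type terms that telescope against the $\nabla_i(h\,T_{r-1})_{ij}$ term.

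\textbf{Main obstacle.} The hard part will be carrying out the inductive cancellation in (1) cleanly: one must correctly track the Codazzi symmetrization and verify that the curvature contribution vanishes because the ambient space is a space form. I would therefore cite this as the classical result of Reilly \cite{Reilly} and Barbosa--Colares \cite{BarCar}, sketching only that the proof rests on the total symmetry of $\nabla h$ granted by Codazzi in constant curvature, and present the purely algebraic identities (2)--(4) in detail since those require no geometry beyond the eigenvalue bookkeeping above.
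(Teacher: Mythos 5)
Your proposal is correct, but there is nothing in the paper to compare it against: the paper states this lemma purely as a quoted classical result, citing \cite{Reilly} and \cite{BarCar}, and gives no proof at all. Your reconstruction is the standard argument from exactly those references, and it is sound. The eigenvalue bookkeeping for (2)--(4) checks out: with $\mu_i^{(r)}=\partial S_{r+1}/\partial\kappa_i$ equal to the $r$-th elementary symmetric function of the $\kappa_j$ with $j\neq i$, each monomial of $S_r$ is counted $(n-1-r)$ times in $\sum_i\mu_i^{(r)}$, each monomial of $S_{r+1}$ is counted $(r+1)$ times in $\sum_i\kappa_i\mu_i^{(r)}$, and (4) follows from (3) together with the splitting $\kappa_iS_r(\kappa|i)=S_{r+1}-S_{r+1}(\kappa|i)$, which gives $\sum_i\kappa_i^2S_r(\kappa|i)=S_1S_{r+1}-(r+2)S_{r+2}$. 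For (1), your identification of the essential geometric input is the right one: in a space form the normal component of the ambient curvature vanishes, so the Codazzi equation makes $\nabla h$ totally symmetric, and the inductive computation of $\div T_r$ via $T_r=S_r\,\mathrm{Id}-hT_{r-1}$ then closes (your displayed identity should read $\nabla_kS_r=(T_{r-1})^{ij}\nabla_kh_{ij}$, a typo rather than an error). Deferring the careful index-chasing of that induction to the cited sources is no weaker than what the authors themselves do, and your treatment of (2)--(4) is strictly more informative than the paper's bare citation.
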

Next we prove the Minkowski formulas for free boundary hypersurfaces in $\mathbb{H}^{n}$ supported on a support hypersurface.
\begin{prop}\label{high-Alx55}
\begin{equation}\label{high-Alx}
 \int_{\Sigma}\frac{H_{k-1}}{x_{n}}dA=\int_{\Sigma}H_{k}\bar{g}(X,\nu)dA,\,\,\text{for all}\,\,k=1,\cdots,n-1.
\end{equation}
\end{prop}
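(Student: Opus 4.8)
The plan is to obtain \eqref{high-Alx} by integrating an intrinsic divergence identity over $\Sigma$. Write $X^\top$ for the tangential projection of $X$ onto $\Sigma$, so that $X=X^\top+\bar g(X,\nu)\nu$, and consider on $\Sigma$ the vector field $T_{k-1}X^\top$. Applying the divergence theorem on the manifold‑with‑boundary $\Sigma$ gives $\int_\Sigma \div_\Sigma(T_{k-1}X^\top)\,dA=\int_\Gamma \bar g(T_{k-1}X^\top,\mu)\,ds$, where $\mu$ is the outward unit conormal of $\Gamma=\p\Sigma$ in $\Sigma$; the whole argument then reduces to computing the interior integrand and showing the boundary term vanishes.

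For the interior computation, since $\div_\Sigma T_{k-1}=0$ by Lemma \ref{lemma-0}(1), only $\operatorname{tr}(T_{k-1}\bar\nabla^\Sigma X^\top)$ contributes. Differentiating the decomposition of $X$ in a tangential orthonormal frame $\{e_i\}$ yields $(\bar\nabla^\Sigma_{e_i}X^\top)_j=\bar g(\bar\nabla_{e_i}X,e_j)-\bar g(X,\nu)h_{ij}$, where $h_{ij}=\bar g(\bar\nabla_{e_i}\nu,e_j)$. Contracting against the symmetric tensor $T_{k-1}$ and using the conformal Killing property \eqref{XXaeq1} to replace the symmetrized first term by $V\bar g_{ij}$, I get
\begin{equation*}
\div_\Sigma(T_{k-1}X^\top)=V\operatorname{tr}(T_{k-1})-\bar g(X,\nu)\operatorname{tr}(T_{k-1}h).
\end{equation*}
By Lemma \ref{lemma-0}(2) and (3) the two traces equal $(n-k)S_{k-1}$ and $kS_k$; rewriting these through $S_{k-1}=\binom{n-1}{k-1}H_{k-1}$, $S_k=\binom{n-1}{k}H_k$, and noting the algebraic identity $(n-k)\binom{n-1}{k-1}=k\binom{n-1}{k}=:C_k$ together with $V=1/x_n$, the identity becomes $\div_\Sigma(T_{k-1}X^\top)=C_k\big(\tfrac{H_{k-1}}{x_n}-H_k\,\bar g(X,\nu)\big)$, which is exactly $C_k$ times the integrand difference in \eqref{high-Alx}.

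The main obstacle is the vanishing of the boundary term, and this is where the orthogonality hypothesis and the umbilicity of $S_{K,\kappa}$ both enter. Orthogonality of $\Sigma$ and $S_{K,\kappa}$ along $\Gamma$ forces $\mu=\bar N$ there; combined with Proposition \ref{xaa}(ii), $\bar g(X,\bar N)=0$, and with $\bar g(\nu,\mu)=0$, this gives $\bar g(X^\top,\mu)=\bar g(X,\mu)-\bar g(X,\nu)\bar g(\nu,\mu)=0$, so $X^\top$ is tangent to $\Gamma$ along $\Gamma$. To conclude that $T_{k-1}X^\top$ remains tangent to $\Gamma$, I would show $\mu$ is a principal direction of $\Sigma$ along $\Gamma$: for $e^\top$ tangent to $\Gamma$,
\begin{equation*}
h^\Sigma(\mu,e^\top)=\bar g(\bar\nabla_{e^\top}\nu,\bar N)=-\bar g(\nu,\bar\nabla_{e^\top}\bar N)=-\kappa\,\bar g(\nu,e^\top)=0,
\end{equation*}
using that $\bar g(\nu,\bar N)\equiv0$ on $\Gamma$, the umbilicity $\bar\nabla_{e^\top}\bar N=\kappa e^\top$, and $\bar g(\nu,e^\top)=0$. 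Thus $\mu$ is an eigendirection of the shape operator, and since $T_{k-1}$ is a polynomial in it, $T_{k-1}$ preserves the splitting $T\Sigma=T\Gamma\oplus\rr\mu$ along $\Gamma$; hence $T_{k-1}X^\top\in T\Gamma$ is orthogonal to $\mu$ and the boundary integral vanishes. Integrating the interior identity then yields \eqref{high-Alx}.
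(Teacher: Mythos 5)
Your proof is correct and follows essentially the same route as the paper: project the conformal Killing field $X$ tangentially onto $\Sigma$, contract the resulting identity $\tfrac12\bigl[\bar{\nabla}_\alpha (X^T)_\beta+\bar{\nabla}_\beta (X^T)_\alpha\bigr]=\tfrac{1}{x_n}\bar g_{\alpha\beta}-h_{\alpha\beta}\bar g(X,\nu)$ with the divergence-free Newton tensor $T_{k-1}$, integrate by parts, and kill the boundary term using $\bar g(X,\bar N)=0$ together with the fact that $\mu=\bar N$ is a principal direction of $\Sigma$ along $\Gamma$. The only difference is that you explicitly verify this last fact (via $h^\Sigma(\mu,e^\top)=-\kappa\,\bar g(\nu,e^\top)=0$), which the paper merely asserts as a consequence of the umbilicity of $S_{K,\kappa}$.
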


\begin{proof}
Let $X^T$ be the tangential projection of $X$ on ${\S}$. We know that $X^T\perp \bar{N}$  along $\p \S$ by \eqref{XXaeq2}.
Let $\{e_\a\}_{\a=1}^{n-1}$ be an orthonormal frame on ${\S}$. From Proposition \ref{xaa} (i), we have that
 \begin{eqnarray}\label{confKillhoro}
\frac12\left[\bar{\n}_\a (X^T)_\b+\bar{\n}_\b (X^T)_\a\right]=\frac{1}{x_{n}} \bar{g}_{\a\b}-h_{\a\b}\bar g(X,\nu).
\end{eqnarray}
Multiplying $T_{k-1}^{\a\b}(h)$ to \eqref{confKillhoro} and integrating by parts on $\S$, from Lemma \ref{lemma-0}, we get
\begin{eqnarray*}
&&\int_\S (n-k) \frac{1}{x_{n}}S_{k-1}-kS_k\bar g(X,\nu) dA
\\&=&\int_\S T_{k-1}^{\a\b}\bar{\n}_\a (X^T)_\b dA=\int_\S \bar{\nabla}_{\alpha}(T_{k-1}^{\a\b}X^T)_\b dA
\\&= &\int_{\p\S} T_{k-1}(X^T, \bar{N}) ds=0.
\end{eqnarray*}
In the last equality, we use the fact that $S_{K,\kappa}$ is an umbilical hypersurface, $\bar{N}$ is a principal direction of $h$, it is also a  principal direction of the Newton tensor $T_{k-1}$ of $h$, which implies that $T_{k-1}(X^T, \bar{N})=0$. The above Proposition is completed by the definition of $H_{k}$.
\end{proof}

Now we use the above Minkowski formulas \eqref{high-Alx} to prove Alexandrov type Theorem for free boundary CMC hypersurfaces  in $\mathbb{H}^{n}$ supported by a support hypersurface.
\begin{theorem}\label{Alex-thm}
Assume $S_{K,\kappa}$ is a horosphere or an equidistant hypersurface. Let $x: \Sigma\to \mathbb{H}^{n}$ be an embedded smooth CMC hypersurface into $B_{K,\kappa}^{int}$ (or $B_{K,\kappa}^{ext}$) whose boundary $\partial\Sigma$ lies on $S_{K,\kappa}$. Assume $\Sigma$ meets $S_{K,\kappa}$ orthogonally. Then $\S$ must be part of an umbilical hypersurface.
\end{theorem}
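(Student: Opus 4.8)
The plan is to run the standard Ros-type scheme: combine the Minkowski formula of Proposition~\ref{high-Alx55} (at the lowest order $k=1$) with the Heintze--Karcher--Ros inequality of Theorem~\ref{HK}, using the constant mean curvature hypothesis to turn the inequality into an equality and then invoking its rigidity case. Throughout I orient $\S$ by the normal $\nu$ pointing out of the enclosed domain $\O$ (so that, after the touching argument below, $H_1\equiv H>0$), and I work in the upper half-space model where $V=\frac{1}{x_n}$.

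The computational core is three short identities. First, since $X$ is conformal Killing with $L_X\bar g=V\bar g$ (Proposition~\ref{xaa}(i)), tracing gives $\div X=nV=\frac{n}{x_n}$; and since $\bar g(X,\bar N)=0$ on $S_{K,\kappa}\supset T$ (Proposition~\ref{xaa}(ii)), the divergence theorem on $\O$, whose boundary is $\S\cup T$, yields
\begin{equation*}
\int_\O \frac{n}{x_n}\,dV=\int_\O \div X\,dV=\int_\S \bar g(X,\nu)\,dA+\int_T \bar g(X,\bar N)\,dA=\int_\S \bar g(X,\nu)\,dA.
\end{equation*}
Next, taking $k=1$ in Proposition~\ref{high-Alx55} and recalling $H_0=1$, the CMC hypothesis lets me pull the constant $H$ out of the integral, so that combined with the previous line
\begin{equation*}
\int_\S \frac{1}{x_n}\,dA=\int_\S H_1\,\bar g(X,\nu)\,dA=H\int_\S \bar g(X,\nu)\,dA=H\int_\O\frac{n}{x_n}\,dV.
\end{equation*}
Dividing by $H$ gives $\int_\S \frac{1}{x_n H_1}\,dA=\int_\O\frac{n}{x_n}\,dV$, which is exactly equality in \eqref{HKR2}. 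The rigidity statement of Theorem~\ref{HK} then forces $\S$ to be part of an umbilical hypersurface meeting $S_{K,\kappa}$ orthogonally, which is the assertion. The case $\O\subset B^{\rm ext}_{K,\kappa}$ is handled identically after replacing $\bar N$ by $-\bar N$, since both $\div X=nV$ and the tangency $\bar g(X,\bar N)=0$ are unaffected.

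The one point requiring genuine care—and the main obstacle—is that Theorem~\ref{mainthm}'s partner result Theorem~\ref{HK} (and the final division by $H$) requires the \emph{positive} normalized mean curvature $H_1>0$, whereas the statement only posits constancy. I would establish this by the usual touching argument: slide the umbilical foliation of $B_{K,\kappa}^{\rm int}$ (the horospheres $\{x_n=t\}$, respectively the equidistant leaves) from the side away from $\Omega$ until it first makes contact with the embedded, compactly-enclosing $\S$; at the contact point $\S$ lies to one side of the leaf, so the geometric comparison principle bounds $H$ below by the leaf's positive principal curvature, and constancy of $H$ then propagates positivity over all of $\S$. This step also fixes the orientation so that the Minkowski normal and the Heintze--Karcher--Ros normal agree, ensuring the signs in the two displayed chains are consistent; once $H_1>0$ is in hand, the remainder is the purely formal chaining above.
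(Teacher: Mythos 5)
Your integral chain is exactly the paper's proof of Theorem \ref{Alex-thm}: trace $L_X\bar g=V\bar g$ to get $\div X=nV$, apply the divergence theorem using $\bar g(X,\bar N)=0$ on $T$, invoke Proposition \ref{high-Alx55} with $k=1$, pull out the constant $H$, and conclude via the equality case of Theorem \ref{HK}. The place where you depart from the paper is the one step you yourself flag as the crux, the strict positivity of $H_1$, and there your argument has a sign error that makes it fail in the interior case. In the half-space model with $\Omega\subset B^{\rm int}_{K,\kappa}$ (say the horosphere case, $B^{\rm int}_{K,\kappa}=\{x_n>1\}$), sliding the leaves $\{x_n=t\}$ down from $t=+\infty$ produces a first contact at the highest point $p$ of $\Sigma$, where $\Sigma$ lies \emph{below} the leaf, i.e.\ outside the horoball $\{x_n>t\}$. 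The leaf has principal curvatures $+1$ only with respect to the downward normal $\bar N=-x_nE_n$, which points \emph{toward} the side where $\Sigma$ lies; with respect to the upward normal (which is the outward normal of $\Omega$ at $p$) its principal curvatures are $-1$. The first-contact comparison principle gives $h_\Sigma\ge h_{\rm leaf}$ with respect to the common normal pointing \emph{away} from the side containing $\Sigma$, so all you obtain is $H_1(p)\ge -1$ (equivalently $H_1(p)\le 1$ with respect to $\bar N$). Touching a comparison surface from its convex side always yields an upper bound, never the positive lower bound you claim; and sliding leaves from the other side is impossible, since the leaf $\{x_n=1\}$ already contains $\partial\Sigma$. (Your sliding does work when $\Omega\subset B^{\rm ext}_{K,\kappa}$, because there the first contact has $\Sigma$ on the horoball side of the leaf.)

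The paper closes this gap differently: it touches $\Sigma$ with \emph{totally geodesic} hypersurfaces, namely the boundaries of Euclidean balls $D_R$ centered at points of $\partial_\infty\mathbb{H}^n$ (the origin, resp.\ $b=(1,0,\dots,0)$), shrunk (resp.\ expanded) until first contact. Because $\Sigma$ meets $S_{K,\kappa}$ orthogonally while $\partial D_R$ does not, the contact point must be an interior point of $\Sigma$; since $\partial D_R$ is totally geodesic, the same comparison now gives $H_1\ge 0$ with respect to the outward normal of $\Omega$, and the borderline case $H_1\equiv 0$ is excluded by the strong maximum principle, which would force $\Sigma$ to lie in $\partial D_{R_0}$, contradicting orthogonal contact with $S_{K,\kappa}$. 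To repair your proof you need a comparison hypersurface whose curvature, computed with respect to the normal pointing away from $\Sigma$, is nonnegative --- the foliation by leaves parallel to $S_{K,\kappa}$ does not have this property on the $B^{\rm int}_{K,\kappa}$ side.
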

\begin{proof}
Consider the upper half-space model. In the case that $T$ is a horosphere, let $D_R=\{x\in\mathbb{R}^{n}_{+}: |x|<R\}$.  In the case that $T$ is an equidistant, let $D_R=\{x\in\mathbb{R}^{n}_{+}: |x-b|<R\}$ where $b=(1,0,\cdots, 0)$.

Since $\Sigma$ is a compact hypersurface, we take $R$  large enough (small resp. ) such that $\Sigma \subseteq D_R$ (or $D_R\cap\Sigma=\emptyset$ resp.) when $\Sigma$ lies in $B_{K,\kappa}^{int}$ ( $B_{K,\kappa}^{ext}$ resp.).
Let $D_R$ shrink (expand resp.) along radial direction in the Euclidean sense, until it touches $\S$ at some point $p$ at a first time.
By our choice of $D_R$, it does not intersect with $T$ orthogonally.
Since $D_R$ does not intersect with $T$ orthogonally, but $\S$ does, we see that $p$ is an interior point of $\S$.
It follows that $H_{1}=H_{1}(p)\ge 0$.
If $H_{1}=0$, then the maximum principle
implies that $\S$ must be some totally geodesic, which is a contradiction since $\Sigma$ is perpendicular to $S_{K,\kappa}$ by hypothesis. Therefore, $H_{1}$ is positive.

Let $\Omega$ be a bounded connected domain enclosed by $\Sigma$ and $S_{K, \kappa}$ whose boundary $\partial\Omega=\Sigma\cup T$.
Using Proposition \ref{xaa} (i) and (ii), we see
\begin{eqnarray}
&&\int_{\Omega}\div Xdx=\int_\O \frac{n}{x_{n}} dx\label{Alx1}\\
&&\int_{\Omega}\div Xdx=\int_\Sigma \bar g(X, \nu)dA+\int_T \bar g(X, \bar{N})dA=\int_\Sigma\bar g(X, \nu)dA\label{Alx2}\\
&&=\frac{1}{H_{1}}\int_\Sigma H_{1}\bar g(X, \nu)dA=\frac{1}{H_{1}}\int_\S \frac{1}{x_{n}} dA=\int_\S\frac{1}{x_{n}\cdot H_{1}}dA.\label{Alx3}
\end{eqnarray}
The above equation \eqref{Alx3} we use Proposition \ref{high-Alx55} with $k=1$. Then the conclusion is from \eqref{Alx1}, \eqref{Alx2}and Theorem \ref{HK}.
\end{proof}

\
Next we use the method of Ros \cite{Ro} and Koh-Lee \cite{KL} to prove higher order Alexandrov Theorem for embedded free boundary CMC hypersurfaces  in $\hh^n$ supported by a support hypersurface.
\begin{theorem}\label{higherAlex}
Assume $S_{K,\kappa}$ is a horosphere or an equidistant hypersurface. Let $x: \Sigma\to \mathbb{H}^{n}$ be an isometric proper immersion smooth hypersurface into $B_{K,\kappa}^{int}$ (or $B_{K,\kappa}^{ext}$) whose boundary $\partial\Sigma$ lies on $S_{K,\kappa}$. Assume $\Sigma$ meets $S_{K,\kappa}$ orthogonally.
\begin{itemize}
\item[(i)] If $x$ is an embedding and has nonzero constant higher order mean curvatures $H_k$, $1\le k\le n-1$. Then $\S$  is part of an umbilical hypersurface.
\item[(ii)] If $x$  has nonzero constant curvature quotient,  i.e.,
$$\frac{H_k}{H_l}=constant, \quad H_l>0, \quad 1\le l<k\le n-1.$$ Then $\S$ is part of an umbilical hypersurface.
\end{itemize}
\end{theorem}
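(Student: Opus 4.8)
The plan is to transplant Ros's integral scheme to the weighted free boundary setting, combining the Heintze--Karcher--Ros inequality (Theorem \ref{HK}), the weighted Minkowski formulas (Proposition \ref{high-Alx55}) and the Newton--MacLaurin inequalities for the elementary symmetric functions; the only input beyond pure integration is the positivity needed to enter the Garding cone. The key preliminary step is therefore to show that the principal curvatures of $\Sigma$ lie everywhere in the Garding cone $\Gamma_k^+=\{S_1>0,\dots,S_k>0\}$, so that $H_1,\dots,H_k>0$ and the Newton--MacLaurin inequalities become available. To produce a single point of $\Gamma_k^+$ I would reuse the touching construction from the proof of Theorem \ref{Alex-thm}: the Euclidean balls $D_R$ used there have totally geodesic boundaries in $\hh^n$, which are \emph{not} orthogonal to $S_{K,\kappa}$, so shrinking (or expanding) them yields a first contact point $p$ with $\Sigma$ that is forced to be interior by the orthogonality of $\Sigma$ along $\partial\Sigma$. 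Comparing the full shape operators at $p$ gives $\kappa_i(p)\ge 0$ for all $i$, whence $S_k(p)\ge 0$; since $H_k$ (resp. $H_l$) is a nonzero constant this forces $H_k>0$, so $p$ is a point of the closed positive cone with $S_k(p)>0$ and therefore lies in $\Gamma_k^+$. As $\Sigma$ is connected, $S_k>0$ on all of $\Sigma$, and $\Gamma_k^+$ is a connected component of $\{S_k>0\}$, the curvature image of $\Sigma$ lies entirely in $\Gamma_k^+$.

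\emph{Part (i).} With $H_k$ a positive constant, I first record, from $\div X=n/x_n$ and $\bar g(X,\bar N)=0$ on $T$ (Proposition \ref{xaa}), the identity $\int_\Sigma \bar g(X,\nu)\,dA=n\int_\O \tfrac{1}{x_n}\,dV$ exactly as in \eqref{Alx1}--\eqref{Alx2}. Then the Minkowski formula of Proposition \ref{high-Alx55} gives
\[
\int_\Sigma \frac{H_{k-1}}{x_n}\,dA=\int_\Sigma H_k\,\bar g(X,\nu)\,dA=nH_k\int_\O \frac{1}{x_n}\,dV,
\]
while the Newton--MacLaurin inequality $H_1H_{k-1}\ge H_k$ (valid in $\Gamma_k^+$) together with Theorem \ref{HK} yields
\[
\int_\Sigma \frac{H_{k-1}}{x_n}\,dA\ \ge\ H_k\int_\Sigma \frac{1}{x_n H_1}\,dA\ \ge\ nH_k\int_\O \frac{1}{x_n}\,dV.
\]
Comparing the two displays forces equality throughout, and the equality case of Theorem \ref{HK} says precisely that $\Sigma$ is part of an umbilical hypersurface meeting $S_{K,\kappa}$ orthogonally.

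\emph{Part (ii).} When $H_k/H_l=c_0>0$ is constant, applying Proposition \ref{high-Alx55} for the indices $k$ and $l$ gives
\[
\int_\Sigma \frac{H_{k-1}}{x_n}\,dA=\int_\Sigma H_k\,\bar g(X,\nu)\,dA=c_0\int_\Sigma H_l\,\bar g(X,\nu)\,dA=c_0\int_\Sigma \frac{H_{l-1}}{x_n}\,dA .
\]
On the other hand the Newton--MacLaurin inequality $H_{k-1}H_l\ge H_kH_{l-1}$, i.e.\ $H_{k-1}\ge c_0H_{l-1}$ in $\Gamma_k^+$, integrated against the positive weight $1/x_n$, gives the reverse inequality. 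Hence $H_{k-1}=c_0H_{l-1}$ pointwise, so that $H_k/H_{k-1}=H_l/H_{l-1}$; since the ratios $H_r/H_{r-1}$ are non-increasing on $\Gamma_k^+$, this coincidence across $l<k$ forces an equality $H_{r-1}H_{r+1}=H_r^2$, whose equality case (with $H_r>0$) is umbilicity. Note that this argument uses only the Minkowski formulas, not Theorem \ref{HK}, which is why embeddedness is assumed only in part (i).

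\emph{Main obstacle.} The delicate step is the first one: I must guarantee that the touching occurs at an interior point --- this is exactly where the orthogonality hypothesis and the non-orthogonal comparison hypersurfaces $\partial D_R$ are essential --- and then upgrade a single elliptic point to global membership in $\Gamma_k^+$ through the connectedness (clopen) argument in curvature space. For the merely immersed case of part (ii), the tangency principle must be applied to the proper immersion rather than to an embedded surface, which requires a little care. A routine but necessary secondary point is the sharp equality characterization of the Newton--MacLaurin inequalities, which is valid precisely inside the cone $\Gamma_k^+$ with the relevant $H_r$ nonzero.
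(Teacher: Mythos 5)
Your proposal is correct and follows essentially the same route as the paper: the touching argument from Theorem \ref{Alex-thm} produces an elliptic point, connectedness puts the curvatures in the positive cone so that the Newton--MacLaurin inequalities apply, and then Ros's integral scheme combines Theorem \ref{HK} with the weighted Minkowski formulas of Proposition \ref{high-Alx55} for part (i), while part (ii) uses only the Minkowski formulas at levels $k$ and $l$ (no embeddedness), exactly as in the paper. The only cosmetic difference is in part (i): you use $H_1H_{k-1}\ge H_k$ and close via the equality case of Theorem \ref{HK}, whereas the paper uses the MacLaurin chain \eqref{inequ} (via $1/H_1\le H_k^{-1/k}$ and $H_{k-1}\ge H_k^{(k-1)/k}$) and closes via its pointwise equality case; the two are interchangeable.
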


\begin{proof}
Since $H_{k}$ is a constant, arguing as the beginning of the proof of Theorem \ref{Alex-thm}, we get $H_{k}>0$ by the compactness of $\Sigma$. The principal curvature are continuous functions. Therefore, we can choose a connected component such that $H_{1},\cdots,H_{k-1}$ are all positive at any point.
According to  \cite{Ro} and the Newton-MacLaurin inequality, we have for each $1\leq r\leq k$,
  \begin{equation}\label{inequ}
0\leq H_{r}^{\frac{1}{r}}\leq H_{r-1}^{\frac{1}{r-1}}\leq\cdots\leq H_{1}
  \end{equation}
\begin{equation}\label{inequ33}
0\leq\frac{H_{r}}{H_{r-1}}\leq\frac{H_{r-1}}{H_{r-2}}\leq\cdots\leq
\frac{H_{1}}{H_{0}}=H_{1}
\end{equation}
with the equality holding only at umbilical points on $\Sigma$. Here $H_0=1$ by convention.
\noindent It follows from \eqref{inequ} that
\begin{equation}\label{inequ2}
  \frac{1}{H_{1}}\leq H_{k}^{-\frac{1}{k}}
\end{equation}
Using Theorem \ref{HK} and \eqref{inequ2}, we have
\begin{equation}\label{HKeq}
 \int_{\Omega}\frac{n}{x_{n}}dx\leq\int_{\Sigma}\frac{1}{x_{n}\cdot H_{1}}dA\leq\int_{\Sigma} H_{k}^{-\frac{1}{k}}\cdot\frac{1}{x_{n}}dA
\end{equation}

On the other hand, by Proposition \ref{xaa}(i) and (ii)
\begin{eqnarray}
&&H_{k}\int_{\Omega}\div Xdx=H_{k}\int_{\Omega}\frac{n}{x_{n}}dx\label{Alx11}\\
&&H_{k}\int_{\Omega}\div Xdx=H_{k}\int_{\Sigma}\bar{g}(X,\nu)dA=\int_{\Sigma}H_{k}\bar{g}(X,\nu)dA=\int_{\Sigma}\frac{ H_{k-1}}{x_{n}}dA\geq\int_{\Sigma} \frac{ H_{k}^{\frac{k-1}{k}}}{x_{n}}dA\label{Alx12}
\end{eqnarray}
where in the last equality and inequality we have used \eqref{high-Alx} and \eqref{inequ} respectively.

Combining \eqref{HKeq}-\eqref{Alx12}, we get \eqref{inequ} equality holds on $\Sigma$. Therefore, $\Sigma$ is part of an umbilical hypersurface. The proof of (i) is finished.

Arguing as the beginning of the proof of Theorem \ref{Alex-thm}, there is a point $p$ on $\Sigma$ such that all the principal curvatures are positive. Therefore, $H_{k}$ and $H_{l}$ are positive at $p$. Since $\alpha=\frac{H_{k}}{H_{l}}$ is constant and $H_{l}$ is positive on $\Sigma$, then $H_{k}>0$ on $\Sigma$ and $\alpha>0$.

By Newton-MacLaurin inequality, we have
\begin{equation}\label{Alx-high}
  0<\alpha=\frac{H_{k}}{H_{l}}\leq \frac{H_{k-1}}{H_{l-1}}
  \end{equation}
Using Proposition \ref{high-Alx55} and $H_{k}=\alpha H_{l}$, we have
\begin{eqnarray}
&&\int_{\Sigma}\frac{H_{k-1}}{x_{n}}dA=\int_{\Sigma}H_{k}\bar{g}(X,\nu)dA=\int_{\Sigma}\alpha H_{l}\,\bar{g}(X,\nu)dA\label{high-Alex14}\\
&&=\alpha\int_{\Sigma} H_{l}\,\bar{g}(X,\nu)dA=\alpha\int_{\Sigma}\frac{H_{l-1}}{x_{n}}dA\nonumber
\end{eqnarray}
where in the last equality we have used \eqref{high-Alx} again.
Combining \eqref{Alx-high} with \eqref{high-Alex14}, we get
\begin{equation}\label{Alx-high15}
\alpha=\frac{H_{k}}{H_{l}}= \frac{H_{k-1}}{H_{l-1}}=const\quad \text{on}\,\,\Sigma
\end{equation}
Proceeding inductively, and taking $p=k-l$, we see that
\begin{equation}\label{Sp}
\frac{H_{p+1}}{H_{1}}=\frac{H_{p}}{H_{0}}=H_{p}
\end{equation}
By using \eqref{inequ33}, we have
\begin{equation}\label{inequ44}
\frac{H_{p+1}}{H_{p}}=\frac{H_{p}}{H_{p-1}}=\cdots=
\frac{H_{1}}{H_{0}}=H_1
\end{equation}
Therefore, $\Sigma$ is part of an umbilical hypersurface. The proof of (ii) is completed.

\end{proof}


\appendix

\section{An example for BVP with horospheres as boundary}\label{appendix}
In the section we give an example that the partial overdetermined problem \eqref{overd} admits a solution if the domain is bounded by two orthogonal horospheres.

We use the Poincar\'e ball model \eqref{poin-ball}. Let $L_{1}$ and $L_{2}$ be two horospheres as follows
\begin{eqnarray}
&& L_{1}:=\left\{x\in \bb^n\mid |x'|^{2}+(x_{n}-\frac{1}{2})^{2}=\frac{1}{4}\right\}\label{L1}\\
&& L_{2}:=\left\{x\in\bb^n\mid|x'|^{2}+(x_{n}+\frac{1}{3})^{2}=\frac{4}{9}\right\}\label{L2}
\end{eqnarray}
$L_1$ and $L_2$ are mutually orthogonal. See Figure 4.

\begin{figure}
\centering
\includegraphics[height=7cm,width=13cm]{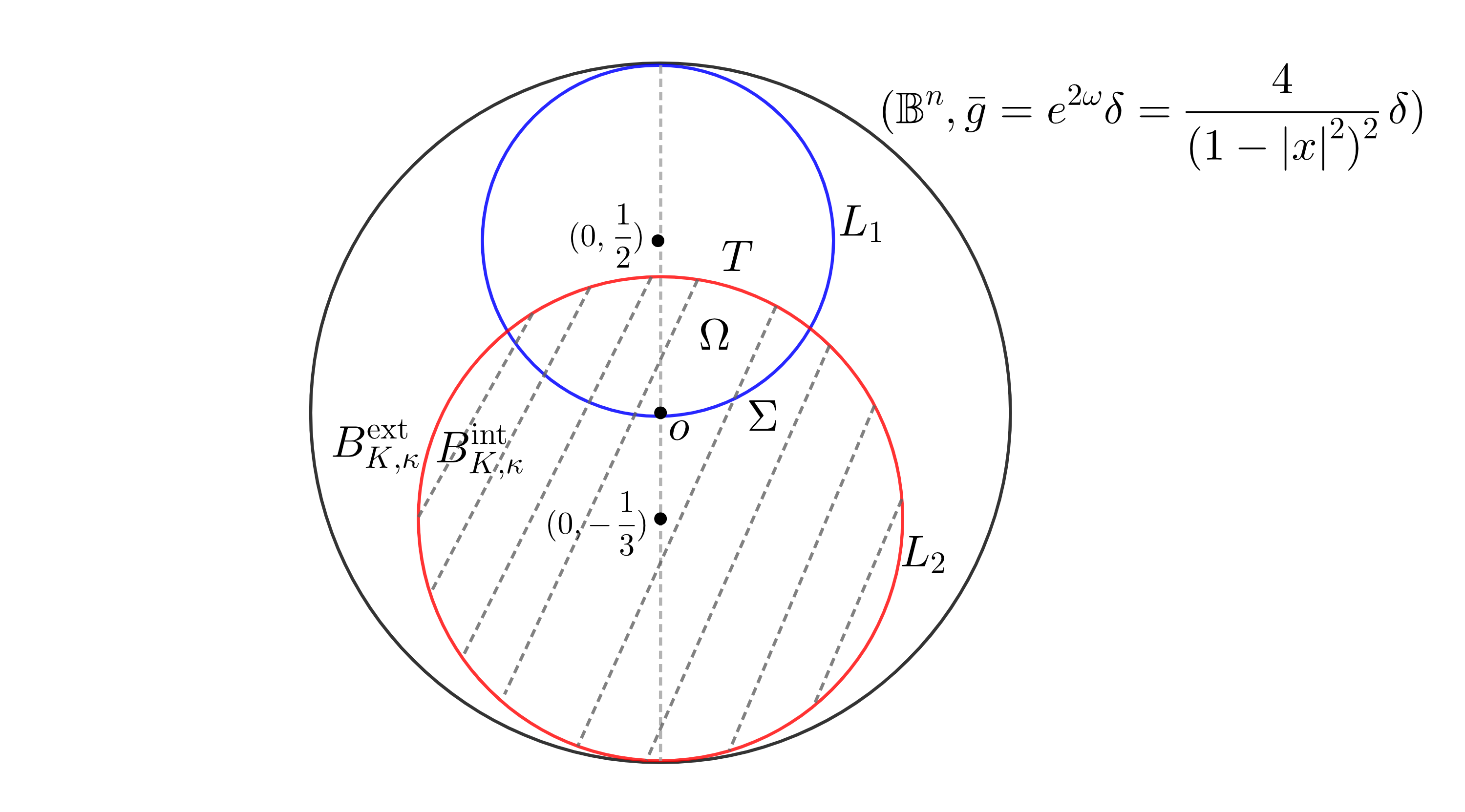}
\caption{$L_{1}$ and $L_{2}$ are two horospheres  with principal curvature $\kappa=1$ and the shaded area is $B_{K,\kappa}^{\rm int}$.}
\end{figure}

The domain $\Omega$ is bounded by $\bar{\Sigma}$ and $T$, that is $\partial\Omega=\bar{\Sigma}\cup T$, where $\Sigma\subseteq L_{1}$ and $T\subseteq L_{2}$.

Let
\begin{eqnarray*}
&& V_{0}:=\frac{1+|x|^{2}}{1-|x|^{2}},\quad V=\frac{2x_{n}}{1-|x|^{2}}.\end{eqnarray*}
It is direct to see
\begin{eqnarray}\label{V0}
 |x|^{2}=\frac{V_{0}-1}{V_{0}+1}\label{V0}, \quad x_{n}=\frac{V}{V_{0}+1}.
\end{eqnarray}
One has the following
\begin{prop}[\cite{WX}, Proposition 4.2]\label{V0Vn}
\begin{eqnarray}
&& \bar{\nabla}^{2}V_{0}=V_{0}\bar{g}, \quad \bar{\nabla}^{2}V=V\bar{g}.
\end{eqnarray}
\end{prop}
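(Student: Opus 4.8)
The plan is to reduce both identities to a single conformal Hessian computation. I first note that the second identity is essentially already in hand: $V=\frac{2x_n}{1-|x|^2}$ is precisely the weight from Case 1 of \eqref{cfkill1} with $K=-1$, so $\bar{\nabla}^2 V=-KV\bar{g}=V\bar{g}$ is exactly the content of Proposition \ref{xaa2} (Case 1) in the Poincar\'e ball model. Hence only the identity for $V_0$ genuinely requires a separate argument, and it will follow the same scheme.

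For $V_0$ I would compute the Hessian directly via the conformal relation \eqref{YZ} between $\bar{\nabla}$ and the Euclidean connection $D$. Writing $\bar g=e^{2\phi}\delta$ with $e^{\phi}=\frac{2}{1-|x|^2}$ and $\rho:=1-|x|^2$, so that $\phi_i=\frac{2x_i}{\rho}$, formula \eqref{YZ} yields the pointwise expression
\begin{equation*}
\bar{\nabla}^2 f_{ij}=\partial_i\partial_j f-\phi_i\,\partial_j f-\phi_j\,\partial_i f+\delta_{ij}\sum_k \phi_k\,\partial_k f
\end{equation*}
in Euclidean coordinates. I would then substitute $f=V_0=\frac{2}{\rho}-1$, for which $\partial_i V_0=\frac{4x_i}{\rho^2}$ and $\partial_i\partial_j V_0=\frac{4\delta_{ij}}{\rho^2}+\frac{16x_ix_j}{\rho^3}$.

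The key step is the algebraic simplification: the off-diagonal contributions $\frac{16x_ix_j}{\rho^3}$ coming from the Euclidean Hessian are cancelled exactly by $-\phi_i\partial_j V_0-\phi_j\partial_i V_0=-\frac{16x_ix_j}{\rho^3}$, so that only multiples of $\delta_{ij}$ survive. Collecting the remaining diagonal terms gives $\bar{\nabla}^2 V_{0,ij}=\frac{4\rho+8|x|^2}{\rho^3}\,\delta_{ij}=\frac{4(1+|x|^2)}{\rho^3}\,\delta_{ij}$ after using $\rho=1-|x|^2$, and this equals $V_0\bar g_{ij}$ since $V_0=\frac{1+|x|^2}{\rho}$ and $\bar g_{ij}=\frac{4}{\rho^2}\delta_{ij}$. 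This cancellation is the only delicate bookkeeping in the argument; everything else is routine differentiation, so I do not expect a substantial obstacle here.

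Alternatively, and more conceptually, I could pass to the hyperboloid model $\hh^n=\{X\in\rr^{n,1}:\langle X,X\rangle=-1\}$, under which the Poincar\'e-ball point $x$ corresponds to $X=\bigl(\frac{1+|x|^2}{1-|x|^2},\,\frac{2x}{1-|x|^2}\bigr)$, so that $V_0$ and $V$ are the restrictions to $\hh^n$ of the linear coordinate functions $X_0$ and $X_n$. Since $\hh^n$ is totally umbilic in Minkowski space with the position vector as normal, every linear function $\ell=\langle X,a\rangle$ restricted to $\hh^n$ satisfies $\bar{\nabla}^2\ell=\ell\,\bar g$; specializing $a$ to the two relevant coordinate directions yields both identities at once. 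I expect to present the direct computation above since it stays within the coordinate framework already used in Section 2, but the hyperboloid picture explains transparently why the eigenvalue in $\bar{\nabla}^2 V_0=V_0\bar g$ is exactly $1$.
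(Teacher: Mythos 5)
Your proposal is correct, and both of its routes check out: I verified the conformal-Hessian computation (the cancellation $-\phi_i\partial_j V_0-\phi_j\partial_i V_0=-\frac{16x_ix_j}{\rho^3}$ against the Euclidean Hessian's off-diagonal part, and the final identity $\frac{4\rho+8|x|^2}{\rho^3}=\frac{4(1+|x|^2)}{\rho^3}=V_0\,\bar g_{ii}/\delta_{ii}$), as well as the hyperboloid argument, where $D_YZ=\bar\nabla_YZ+\bar g(Y,Z)X$ immediately gives $\bar\nabla^2\langle X,a\rangle=\langle X,a\rangle\,\bar g$. The comparison with the paper is somewhat degenerate: the paper offers no proof of this proposition at all, simply citing Wang--Xia (\cite{WX}, Proposition 4.2) for both identities, exactly as it does for Case 1 of Proposition \ref{xaa2}. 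So your first reduction (deducing $\bar\nabla^2V=V\bar g$ from Proposition \ref{xaa2} with $K=-1$) stays entirely inside the paper's own logical structure, while your direct computation for $V_0$ supplies what the paper outsources; notably, your method is the same conformal-change technique the paper \emph{does} carry out in the half-space model (Proposition \ref{xaa2}, Case 2, via \eqref{YZ}), transplanted to the Poincar\'e ball. What your version buys is self-containedness, and the hyperboloid remark additionally explains conceptually why the eigenvalue is exactly $1$: $V_0$ and $V$ are restrictions of Minkowski linear coordinates, for which the Obata-type equation is automatic. One small economy you could add: the same coordinate formula with $f=V=\frac{2x_n}{\rho}$ proves the second identity directly, so neither identity need rely on the cited Proposition \ref{xaa2}.
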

\begin{prop}[\cite{WX} Proposition 4.3]\label{field}
For any tangential vector field $Z$ on $\hh^{n}$,
\begin{eqnarray}
&&\bar \n_{Z} V_0= \bar g(x, Z);\label{dV0}
\\&&\bar \n_{Z} V=e^{-\omega}\bar g(Z,E_{n})+ e^{-2\omega}\bar g(x, E_{n})\bar g(x, Z).\label{dV1}
\end{eqnarray}
Here $e^{2\omega}=\frac{4}{(1-|x|^2)^2}$.
\end{prop}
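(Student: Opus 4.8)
The plan is to reduce both identities to an elementary computation in the underlying Euclidean metric $\delta$ and then re-express the answer through the hyperbolic metric $\bar g$. The key observation is that for a scalar function $f$ the quantity $\bar\nabla_Z f$ is just the directional derivative $Z(f)$, which does not depend on the metric; writing $D$ for the Euclidean gradient we have $Z(f)=\langle Df,Z\rangle$, where $\langle\cdot,\cdot\rangle$ is the Euclidean inner product. Since $\bar g=e^{2\omega}\delta$ with $e^{2\omega}=\frac{4}{(1-|x|^2)^2}$ and $e^{\omega}=\frac{2}{1-|x|^2}$, one has $\bar g(Y,Z)=e^{2\omega}\langle Y,Z\rangle$, and hence $\langle Y,Z\rangle=e^{-2\omega}\bar g(Y,Z)$. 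This dictionary is what turns the Euclidean computation into the stated hyperbolic formulas.

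First I would compute the Euclidean gradient of $V_0=\frac{1+|x|^2}{1-|x|^2}$. Direct differentiation gives $DV_0=\frac{4x}{(1-|x|^2)^2}=e^{2\omega}x$, so that $\bar\nabla_Z V_0=\langle e^{2\omega}x,Z\rangle=e^{2\omega}\langle x,Z\rangle=\bar g(x,Z)$, which is \eqref{dV0}. Next I would treat $V=\frac{2x_n}{1-|x|^2}$. Differentiating component-wise, the indices $i\neq n$ produce $\frac{4x_nx_i}{(1-|x|^2)^2}$, while the $i=n$ component carries an additional summand $\frac{2}{1-|x|^2}$; collecting these yields the Euclidean gradient $DV=e^{2\omega}x_n\,x+e^{\omega}E_n$. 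Pairing with $Z$ and using $x_n=\langle x,E_n\rangle=e^{-2\omega}\bar g(x,E_n)$ together with $\langle\cdot,\cdot\rangle=e^{-2\omega}\bar g(\cdot,\cdot)$ gives $\bar\nabla_Z V=e^{2\omega}x_n\,e^{-2\omega}\bar g(x,Z)+e^{\omega}e^{-2\omega}\bar g(E_n,Z)=e^{-\omega}\bar g(Z,E_n)+e^{-2\omega}\bar g(x,E_n)\bar g(x,Z)$, which is exactly \eqref{dV1}.

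There is no genuine obstacle here: the content is just a differentiation, and the only point requiring care is the consistent bookkeeping of the conformal factor $e^{2\omega}$ when passing between the Euclidean pairing and $\bar g$. In particular one must keep a single power $e^{\omega}$ (not $e^{2\omega}$) attached to the $E_n$ term, so that the two summands of \eqref{dV1} acquire their correct weights $e^{-\omega}$ and $e^{-2\omega}$ respectively. Since these identities are also recorded in \cite{WX}, Proposition 4.3, one may alternatively simply invoke that reference.
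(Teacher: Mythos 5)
Your proposal is correct, and the computation checks out line by line: $DV_0=\frac{4x}{(1-|x|^2)^2}=e^{2\omega}x$ gives \eqref{dV0} immediately, and $DV=e^{2\omega}x_n\,x+e^{\omega}E_n$ together with the dictionary $\langle\cdot,\cdot\rangle=e^{-2\omega}\bar g(\cdot,\cdot)$ and $x_n=\langle x,E_n\rangle=e^{-2\omega}\bar g(x,E_n)$ gives \eqref{dV1}; the metric-independence of $\bar\nabla_Z f=Z(f)$ for scalars is exactly the right starting point. The comparison with the paper is somewhat degenerate here: the paper supplies no proof at all, but simply quotes the result from \cite{WX}, Proposition 4.3 (the fallback you mention in your last sentence). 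So your argument is a self-contained verification where the paper delegates to a reference. What your route buys is independence from \cite{WX} and transparency about where each conformal weight comes from --- in particular the single factor $e^{\omega}$ (not $e^{2\omega}$) on the $E_n$ term of $DV$, which is precisely what produces the asymmetric weights $e^{-\omega}$ and $e^{-2\omega}$ in \eqref{dV1}; what the citation buys is brevity and consistency with the paper's treatment of the companion facts (Propositions \ref{xaa}, \ref{xaa2}, \ref{V0Vn}), which are likewise quoted from \cite{WX} in the geodesic-sphere case rather than re-derived.
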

Let
\begin{equation*}
  u:=\frac{1}{n}(V_{0}-V-1)
\end{equation*}
By Proposition \ref{V0Vn}, we have
\begin{equation*}
  \bar{\Delta}u-nu=1\quad \text{in} \,\,\Omega.
\end{equation*}
Combining \eqref{V0} with \eqref{L1}, we get
\begin{equation*}
u=0\quad\text{on}\,\, \Sigma\subset L_1.
\end{equation*}
Since $\Sigma\subseteq L_{1}$ and $\bar{g}=e^{2\omega}\delta$, then the unit outward normal vector of $\Sigma$ is $$\nu=2e^{-\omega}(x', x_{n}-\frac{1}{2}).$$

Using \eqref{dV0} and \eqref{dV1}, one checks that on $\Sigma$,
\begin{eqnarray}
&&\bar{\nabla}_{\nu}u=\frac{1}{n}(\bar{\nabla}_{\nu}V_{0}-\bar{\nabla}_{\nu}V)=\frac{1}{n}\bar{g}(x,\nu)(1-e^{-2\omega}\bar{g}(x,E_{n}))-\frac{1}{n}e^{-\omega}\bar{g}(\nu,E_{n})\\
&&=\frac{2}{n}e^{\omega}(|x|^{2}-\frac{1}{2}x_{n})(1-x_{n})-\frac{2}{n}(x_{n}-\frac{1}{2})\nonumber\\
&&=\frac{1}{n}\nonumber.
\end{eqnarray}
In the last equality above we have used \eqref{L1}.

On the other hand, since $T\subseteq L_{2}$ and $\bar{g}=e^{2\omega}\delta$, then the unit outward normal vector of $T$ is $$\bar{N}=\frac{3}{2}e^{-\omega}(x',x_{n}+\frac{1}{3}).$$

Using \eqref{L2}, we get
\begin{equation}\label{f}
u=\frac{1}{n}(\frac{1+|x|^{2}}{1-|x|^{2}}-\frac{2x_{n}}{1-|x|^{2}}-1)=\frac{5|x|^{2}-1}{n(1-|x|^{2})} \quad \text{on}\,\,T.
\end{equation}
Using \eqref{dV0}, \eqref{dV1} and \eqref{L2}, one checks that on $T$,
\begin{eqnarray}
&&\bar{\nabla}_{\bar{N}}u=\frac{1}{n}(\bar{\nabla}_{\bar{N}}V_{0}-\bar{\nabla}_{\bar{N}}V)=\frac{1}{n}\bar{g}(x,\bar{N})(1-e^{-2\omega}\bar{g}(x,E_{n}))-\frac{1}{n}e^{-\omega}\bar{g}(\bar{N},E_{n})\label{gradf}\\
&&=\frac{3}{2n}e^{\omega}(|x|^{2}+\frac{1}{3}x_{n})(1-x_{n})-\frac{3}{2n}(x_{n}+\frac{1}{3})\nonumber\\
&&=\frac{5|x|^{2}-1}{n(1-|x|^{2})}\nonumber.
\end{eqnarray}
Combining \eqref{f} with \eqref{gradf}, we get $$\bar{\nabla}_{\bar{N}}u=u\quad \hbox{on }T.$$

In summary, we see $u=\frac{1}{n}(V_{0}-V-1)$ is a solution of the partially overdetermined BVP \eqref{overd}, but $\Sigma$ is part of a horosphere.
\qed

\

\end{document}